\newcommand{\ft}{\mathcal F_O}
\def\textmatrix#1&#2\\#3&#4\\{\bigl({#1 \atop #3}\ {#2 \atop #4}\bigr)}
\def\dispmatrix#1&#2\\#3&#4\\{\left({#1 \atop #3}\ {#2 \atop #4}\right)}
\newcommand{\beg}{\begin{equation}}
\newcommand{\eeg}{\end{equation}}
\newcommand{\ben}{\begin{eqnarray*}}
\newcommand{\een}{\end{eqnarray*}}
\newtheorem{thm}{Theorem}[section]
\newtheorem{cor}[thm]{Corollary}
\newtheorem{lem}[thm]{Lemma}
\newtheorem{prop}[thm]{Proposition}
\numberwithin{equation}{section} \theoremstyle{definition}
\newtheorem{defn}[thm]{Definition}
\newtheorem{eg}[thm]{Example}
\def\textmatrix#1&#2\\#3&#4\\{\bigl({#1 \atop #3}\ {#2 \atop #4}\bigr)}
\def\dispmatrix#1&#2\\#3&#4\\{\left({#1 \atop #3}\ {#2 \atop #4}\right)}
\begin{document}
\title[Structure theorems for $\Gamma_n$-contractions and $\mathbb E$-contractions]
{Structure theorems for operators associated with two domains
related to $\mu$-synthesis}

\author[Bappa Bisai]{Bappa Bisai}
\address[Bappa Bisai]{Mathematics Department, Indian Institute of Technology Bombay,
Powai, Mumbai - 400076, India.} \email{bisai@math.iitb.ac.in}

\author[Sourav Pal]{Sourav Pal}
\address[Sourav Pal]{Mathematics Department, Indian Institute of Technology Bombay,
Powai, Mumbai - 400076, India.} \email{sourav@math.iitb.ac.in}

\keywords{Symmetrized polydisc, Tetrablock, Spectral set, $\Gamma_n$-contraction, $\mathbb E$-contraction, Canonical decomposition.}

\subjclass[2010]{47A13, 47A15, 47A20, 47A25, 47A65}

\thanks{The first by named author is supported by a Ph.D fellowship
of the University Grants Commissoin (UGC). The second named author
is supported by the Seed Grant of IIT Bombay, the CPDA and the
INSPIRE Faculty Award (Award No. DST/INSPIRE/04/2014/001462) of
DST, India.}

\begin{abstract}
A commuting tuple of $n$ operators $(S_1, \dots, S_{n-1}, P)$
defined on a Hilbert space $\mathcal{H}$, for which the closed
symmetrized polydisc
\[
    \Gamma_n = \left\{ \left(\sum_{i=1}^{n}z_i,
    \sum\limits_{1\leq i<j\leq n}z_iz_j, \dots, \prod_{i=1}^{n}z_i \right) : |z_i|\leq 1, i=1, \dots, n
    \right\}
\]
is a spectral set is called a $\Gamma_n$-contraction. Also a triple of commuting operators $(A,B,P)$ for which the closed tetrablock $\overline{\mathbb E}$ is a spectral set is called an $\mathbb E$-contraction, where
\[
\mathbb E = \{ (x_1,x_2,x_3)\in\mathbb C^3\,:\,
 1-zx_1-wx_2+zwx_3  \neq 0 \quad \forall z, w \in \overline{\mathbb D} \}.
\]
There are several decomposition theorems for contraction operators in the literature due to Sz. Nagy, Foias, Levan, Kubrusly, Foguel and few others which reveal structural information of a contraction. In this article, we obtain analogues of six such major theorems for both $\Gamma_n$-contractions and $\mathbb E$-contractions. In each of these decomposition theorems, the underlying Hilbert space admits a unique orthogonal decomposition which is provided by the last component $P$. The central role in determining the structure of a $\Gamma_n$-contraction or an $\mathbb E$-contraction is played by positivity of some certain operator pencils and the existence of a unique operator tuple associated with a $\Gamma_n$-contraction or an $\mathbb E$-contraction.

\end{abstract}

\maketitle

\tableofcontents

\section{Introduction}

Throughout the paper all operators are bounded linear
transformations defined on complex Hilbert spaces. The basic
definitions are given in Section \ref{definitions}.\\

This article is devoted to studying structure theory of operators
associated with two popular domains namely the symmetrized
polydisc $\mathbb G_n$ and the tetrablock $\mathbb E$ which are
the following sets:

\begin{align*}
\mathbb G_n &=\left\{ \left(\sum_{1\leq i\leq n} z_i,\sum_{1\leq
i<j\leq n}z_iz_j,\dots,
\prod_{i=1}^n z_i \right): \,|z_i|< 1, i=1,\dots,n \right \} \subset \mathbb C^n, \\
\mathbb E &= \{ (x_1,x_2,x_3)\in\mathbb C^3\,:\,
 1-zx_1-wx_2+zwx_3  \neq 0 \quad \forall z, w \in \overline{\mathbb D} \}.
\end{align*}

These domains are closely related to the $\mu$-synthesis problem.
The $\mu$-synthesis is a part of the theory of robust control of
systems comprising interconnected electronic devices whose outputs
are linearly dependent on the inputs. Given a linear subspace $E$
of $\mathcal M_n(\mathbb C)$, the space of all $n \times n$
complex matrices, the functional
\[
\mu_E(A):= (\text{inf} \{ \|X \|: X\in E \text{ and } (I-AX)
\text{ is singular } \})^{-1}, \; A\in \mathcal M_n(\mathbb C),
\]
is called a \textit{a structured singular value}, where the linear
subspace $E$ is referred to as the \textit{structure}. If
$E=\mathcal M_n(\mathbb C)$, then $\mu_E (A)$ is equal to the
operator norm $\|A\|$, while if $E$ is the space of all scalar multiples of the identity matrix, then $\mu_E(A)$
is the spectral radius $r(A)$. For any linear subspace $E$ of
$\mathcal M_n(\mathbb C)$ that contains the identity matrix $I$,
$r(A)\leq \mu_E(A) \leq \|A\|$. We refer readers to the pioneering
work of Doyle \cite{Doyle} for the control-theory motivations
behind $\mu_E$ and for further details an interested reader can
see \cite{Francis}. The aim of $\mu$-synthesis is to find an
analytic function $F$ from the open unit disk $\mathbb D$ (with center at the origin) of the complex plane
to $\mathcal M_n(\mathbb C)$ subject to a finite
number of interpolation conditions such that $\mu_E(F(\lambda))<1$
for all $\lambda \in \mathbb D$. If $E$ is the linear subspace of
$2 \times 2$ diagonal matrices, then for any $A=(a_{ij}) \in
\mathcal M_2 (\mathbb C)$, $\mu_E (A)<1$ if and only if
$(a_{11},a_{22}, \det A)\in \mathbb E$ (\cite{A:W:Y}, Section-9).
Also if $E=\{ \lambda I:\lambda \in \mathbb C \} \subseteq
\mathcal M_n (\mathbb C)$, then $\mu_E (A)=r(A)<1$ if and only if
$\pi_n(\lambda_1,\dots, \lambda_n) \in \mathbb G_n$ (see
\cite{costara1}), where $\lambda_1, \dots , \lambda_n$ are
eigenvalues of $A$ and $\pi_n$ is the symmetrization map on
$\mathbb C^n$ defined by
\[
\pi_n(z_1,\dots, z_n) = \left(\sum_{1\leq i\leq n} z_i,\sum_{1\leq
i<j\leq n}z_iz_j,\dots, \prod_{i=1}^n z_i \right).
\]
The sets $\overline {\mathbb G_n}$ and $\overline{\mathbb E}$ are
not convex but polynomially convex. In spite of having origin in
control engineering, the domains $\mathbb G_n$ and $\mathbb E$
have been extensively studied in past two decades by numerous
mathematicians for aspects of complex geometry, function theory
and operator theory. An interested reader is referred to some
exciting works of recent past \cite{A:W:Y, ALY12, ay-jfa,
bharali1, costara, costara1, edi-zwo, jarnicki, L:K2, L:K1,
PalSubvarieties, young, Zwo} and references there in. In this
article, we contribute to the existing rich operator theory
(see \cite{T:B, tirtha-sourav, tirtha-sourav1, S:B,
sourav, S:PRational, S:P, pal-shalit} and articles referred there)
of these two domains. Operator theory on a domain is always of
independent interests, yet we expect that the results obtained in
this article will throw new lights to the function theory and
complex geometry of $\mathbb G_n$ and $\mathbb E$ which may help
the control engineering community as well. So, our primary object
of study is an operator tuple for which $\overline{\mathbb G_n}$
or $\overline{\mathbb E}$ is a spectral set.

\begin{defn}
A compact set $K\subset \mathbb C^n$ is said to be a spectral set
for a commuting $n$-tuple of operators $\underline{T}=(T_1,\dots,
T_n)$ if the Taylor joint spectrum $\sigma (\underline{T})$ of
$\underline{T}$ is a subset of $K$ and von Neumann inequality
holds for every rational function, that is,
\[
\|f(\underline{T})\|\leq \|f\|_{\infty, K} = \text{sup}\{|f(z_1,
\dots, z_n)| : (z_1, \dots, z_n) \in K\} \,,
\]
for all rational functions $f$ with poles off $K$.
\end{defn}

\begin{defn}
A commuting $n$-tuple of operators $(S_1,\dots, S_{n-1},P)$ for which
the closed symmetrized polydisc $\Gamma_n$ ($=\overline{\mathbb
G_n}$) is a spectral set is called a
$\Gamma_n$-\textit{contraction}. Similarly a commuting triple of
operators $(A,B,P)$ for which $\overline{\mathbb E}$ is a spectral
set is called an $\mathbb E$-\textit{contraction} or a
\textit{tetrablock-contraction}.
\end{defn}

It is evident from the definitions that the adjoint of a
$\Gamma_n$-contraction or an $\mathbb E$-contraction is also a
$\Gamma_n$-contraction or an $\mathbb E$-contraction respectively
and if $(S_1,\dots, S_{n-1},P)$ is a $\Gamma_n$-contraction or
$(A,B,P)$ is an $\mathbb E$-contraction, then $P$ is a
contraction.\\

One of the most wonderful discoveries in operator theory is the
canonical decomposition of a contraction due to Nagy and Foias
\cite{Nagy} which states the following:

\begin{thm}\label{thm:candecomp}
Let $T$ on $\mathcal H$ be a contraction. Let $\mathcal H_1$ be
the maximal subspace of $\mathcal H$ which reduces $T$ and on
which $T$ is unitary. Let $\mathcal H_2=\mathcal H\ominus \mathcal
H_1$. Then $T=T_1 \oplus T_2$ with respect to $\mathcal H=\mathcal
H_1 \oplus \mathcal H_2$ is the unique orthogonal decomposition of
$T$ into unitary $T_1=T|_{\mathcal H_1}$ and c.n.u
$T_2=T|_{\mathcal H_2}$. Anyone of $\mathcal H_1, \mathcal H_2$
may be equal to the trivial subspace $\{ 0 \}$.
\end{thm}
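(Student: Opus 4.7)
The plan is to exhibit $\mathcal H_1$ explicitly and verify all the required properties. Define
\[
\mathcal H_1 := \{h \in \mathcal H : \|T^n h\| = \|h\| \text{ and } \|T^{*n}h\| = \|h\| \text{ for every } n \geq 0\}.
\]
First I would show $\mathcal H_1$ is a closed subspace. Since $T$ is a contraction, $I - T^{*n}T^n \geq 0$, so the condition $\|T^n h\| = \|h\|$ is equivalent to $\langle (I-T^{*n}T^n)h,h\rangle = 0$, i.e.\ $h \in \ker(I - T^{*n}T^n)^{1/2}$, which is closed. Similarly for $T^*$. Thus $\mathcal H_1$ is a countable intersection of closed subspaces.

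Next I would establish that $\mathcal H_1$ reduces $T$ and that $T|_{\mathcal H_1}$ is unitary. The key observation is that for $h\in\mathcal H_1$, taking $n=1$ gives $T^*T h = h$ and $TT^* h = h$, so $T^*T$ and $TT^*$ act as the identity on $\mathcal H_1$. For invariance under $T$: given $h\in\mathcal H_1$ and $k\geq 1$, use $T^{*k}Th = T^{*(k-1)}(T^*T h) = T^{*(k-1)}h$, whose norm equals $\|h\|=\|Th\|$ by the defining property of $h$. Invariance under $T^*$ is symmetric. Consequently $\mathcal H_1$ reduces $T$, and on $\mathcal H_1$ the restriction $T_1$ is isometric with isometric adjoint, hence unitary.

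For maximality: if $\mathcal M$ is any reducing subspace on which $T$ acts as a unitary, then trivially every $h\in\mathcal M$ satisfies $\|T^n h\| = \|T^{*n}h\| = \|h\|$, so $\mathcal M \subset \mathcal H_1$. This simultaneously shows that $T_2 := T|_{\mathcal H_2}$ is completely non-unitary: any reducing subspace $\mathcal N$ of $\mathcal H_2$ on which $T_2$ is unitary would be reducing for $T$ with $T|_{\mathcal N}$ unitary, forcing $\mathcal N \subset \mathcal H_1 \cap \mathcal H_2 = \{0\}$. Uniqueness follows from the same maximality principle: in any orthogonal decomposition $\mathcal H = \mathcal K_1 \oplus \mathcal K_2$ with $T|_{\mathcal K_1}$ unitary and $T|_{\mathcal K_2}$ c.n.u., one has $\mathcal K_1 \subset \mathcal H_1$, and if the inclusion were strict, the nonzero reducing subspace $\mathcal H_1 \ominus \mathcal K_1 \subset \mathcal K_2$ would carry a unitary part of $T|_{\mathcal K_2}$, a contradiction.

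The main obstacle is verifying that $\mathcal H_1$ is $T^*$-invariant with the cleanest use of the defining conditions; the rest of the argument is a fairly mechanical bookkeeping exercise once one recognizes that the defining isometry conditions at level $n=1$ already force $T^*T = TT^* = I$ pointwise on $\mathcal H_1$, which is what upgrades isometry to unitarity.
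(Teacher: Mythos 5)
Your proof is correct and is essentially the classical Sz.-Nagy--Foias argument that the paper cites for this theorem (the paper itself offers no proof, only the reference to [Nagy]): one defines $\mathcal H_1$ as the set of vectors on which all powers of $T$ and $T^*$ act isometrically, and derives reducibility, unitarity, maximality, and uniqueness from the pointwise identities $T^*Th=h$ and $TT^*h=h$. All the steps, including the closedness of $\mathcal H_1$ via kernels of positive operators and the uniqueness argument through $\mathcal H_1\ominus\mathcal K_1$, are sound.
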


In \cite{N:L}, Levan took an appealing next step to split further
a c.n.u (completely non-unitary) contraction into two orthogonal parts of which one is a
c.n.i (completely non-isometry) contraction and the other is a unilateral
shift, i.e., a pure isometry.\\

In \cite{S:P}, the second named author of this article established
that every $\mathbb E$-contraction $(A,B,P)$ defined on $\mathcal
H$ admits an analogous orthogonal decomposition into an $\mathbb
E$-unitary $(A_1,B_1,P_1)$ and a c.n.u $\mathbb E$-contraction
$(A_2,B_2,P_2)$ with respect to the decomposition $\mathcal H
=\mathcal H_1 \oplus \mathcal H_2$ whence $P=P_1\oplus P_2$ on
$\mathcal H_1 \oplus \mathcal H_2$ is the canonical decomposition
of the contraction $P$. The beauty of this decomposition is that
the canonical decomposition of $P$ ($P=P_1\oplus P_2$ with respect
to $\mathcal H=\mathcal H_1 \oplus \mathcal H_2$) is chosen first
and then it was shown that $A, B$ reduce both $\mathcal H_1$ and
$\mathcal H_2$. In \cite{sourav3}, the same author proved that a
similar decomposition was possible for a $\Gamma_n$-contraction
too. In Theorem \ref{cd1}, we show that for a c.n.u
$\Gamma_n$-contraction $(S_1,\dots, S_{n-1},P)$ on $\mathcal H$,
if $P=P_1\oplus P_2$ with respect to $\mathcal H=\mathcal
H_1\oplus \mathcal H_2$ is the unique decomposition into
unilateral shift $P_1$ and c.n.i $P_2$, then $S_1,\dots, S_{n-1}$
also reduce both $\mathcal H_1, \mathcal H_2$ provided either
$P^*$ commutes with each $S_i$ or $\mathcal H_1$ is the maximal
invariant subspace for $P$ on which $P$ is isometry. In such cases
$(S_1,\dots, S_{n-1},P)$ admits a unique decomposition into a pure
$\Gamma_n$-isometry and a c.n.i $\Gamma_n$-contraction. So, in a
word this can be thought of as an analogue of Levan's
decomposition for the $\Gamma_n$-contractions. We also show by
example that at least one of the conditions that either $P^*$
commutes with each $S_i$ or $\mathcal H_1$ is the maximal
invariant subspace for $P$ on which $P$ is isometry is essential
for such a decomposition. Example \ref{example1} shows that if we
drop either of the conditions then we may not reach the conclusion
of Theorem \ref{cd1}. In Theorem \ref{cd11}, we show that a
similar Levan type decomposition holds for an $\mathbb
E$-contraction under similar conditions. In an analogous way, a
counter example of $\mathbb E$-contraction is given in Example
\ref{example12} to establish that we cannot ignore both the
conditions simultaneously. We used the positivity of certain
operator pencils for Levan's decomposition of a
$\Gamma_n$-contraction whereas the existence of fundamental
operator pair guarantees the similar decomposition of an $\mathbb
E$-contraction.\\

After Nagy-Foias and Levan, Kubrusly, Foguel and few other
mathematicians found different decompositions of a contraction. We
recall six such decompositions and segregate them in Section
\ref{gamma-n}. We present analogues of these decompositions for
$\Gamma_n$ contractions in the same section. Also we obtain
similar decomposition theorems for an $\mathbb E$-contraction in
Section \ref{tetra-block}. We show that in each theorem, a
$\Gamma_n$-contraction $(S_1,\dots, S_{n-1},P)$ or an $\mathbb
E$-contraction $(A,B,P)$ is split according to the corresponding
decomposition of the component $P$. In Section \ref{definitions},
we describe brief literatures of a $\Gamma_n$-contraction and an
$\mathbb E$-contraction and provide several definitions with
proper motivations. In Section \ref{results}, we accumulate from
the literature few results about $\Gamma_n$-contractions and
$\mathbb E$-contractions which we shall use in sequel.

\section{A brief literature and definitions}\label{definitions}

In this section, we recall from literature a few special classes
of $\Gamma_n$-contractions and $\mathbb E$-contractions. Also we
shall define several new classes of $\Gamma_n$-contractions and
$\mathbb E$-contractions and give proper motivations behind such
definitions. We begin with a survey of several important classes
of Hilbert space contractions, e.g., unitary, isometry,
co-isometry. It is well-known that an operator $T$, defined on a
Hilbert space $\mathcal H$, is a unitary or an isometry if it
satisfies $T^*T=TT^*=I$ or $T^*T=I$ respectively. Also a
co-isometry is the adjoint of an isometry. The following is the
geometric way of describing these special classes.

\begin{defn}
An operator $T$ on $\mathcal H$ is
\begin{itemize}
\item[(i)] a unitary if $T$ is a normal operator and the spectrum
$\sigma (T)$ of $T$ is a subset of the unit circle $\mathbb T$ ;

\item[(ii)] an isometry if it is the restriction of a unitary to
an invariant subspace, that is, if there is a Hilbert space
$\mathcal K$ that contains $\mathcal H$ as a closed linear
subspace and a unitary $U$ on $\mathcal K$ such that $\mathcal H$
is an invariant subspace for $U$ and that $U|_{\mathcal H}=T$ ;

\item[(iii)] a co-isometry if $T^*$ is an isometry.

\end{itemize}

\end{defn}

We have already witnessed that a contraction is an operator for
which $\overline{\mathbb D}$ is a spectral set. So, a unitary is a
normal contraction that has $\mathbb T$, i.e., the boundary of
$\overline{\mathbb D}$ as a spectral set. For a domain $G$ in
$\mathbb C^n$ for $n\geq 2$, the notion of boundary is generalized
as the distinguished boundary of $\overline{G}$. For a compact
subset $X$ of $\mathbb C^n$ let $\mathcal A(X)$ be an algebra of
continuous complex-valued functions on $X$ which separates the
points of $X$. A \textit{boundary} for $\mathcal A(X)$ is a closed
subset $\Delta X$ of $X$ such that every function in $\mathcal
A(X)$ attains its maximum modulus on $\Delta X$. It follows from
the theory of uniform algebras that the intersection of all the
boundaries of $X$ is also a boundary for $\mathcal A(X)$ (see
Theorem 9.1 of \cite{wermer}). This smallest boundary is called
the $\check{\textup{S}}$\textit{ilov boundary} for $\mathcal
A(X)$. When $\mathcal A(X)$ is the algebra of rational functions
which are continuous on $X$, the $\check{\textup{S}}$\textit{ilov
boundary} for $\mathcal A(X)$ is called the \textit{distinguished
boundary} of $X$ and is denoted by $bX$.\\

It is well-known that the distinguished boundary of the closed
polydisc $\overline{\mathbb D^n}$ is the $n$-torus $\mathbb T^n$.
We obtain from the literature (see \cite{edi-zwo}) that the
distinguished boundary of $\Gamma_n$ is the following set:
\begin{align*}
b\Gamma_n & = \left\{ \left(\sum_{i=1}^{n}z_i,
    \sum\limits_{1\leq i<j\leq n}z_iz_j, \dots, \prod_{i=1}^{n}z_i \right) : |z_i|= 1, i=1, \dots, n
    \right\} \\
    & = \pi_n(\mathbb T^n) \\
    & = \{ (s_1,\dots,s_{n-1},p)\in\Gamma_n\,:\, |p|=1 \}.
\end{align*}
Also the distinguished boundary of $\overline{\mathbb E}$ was
determined in \cite{A:W:Y} to be the set
\[
b\mathbb E  = \{ (a,b,p)\in \overline{\mathbb E}\,:\, |p|=1 \}.
\]

The notion of distinguished boundary naturally leads to the
following definitions which are already there in the literature of
$\Gamma_n$-contractions (see \cite{S:B}).

\begin{defn}
Let $S_1,\dots, S_{n-1},P$ be commuting operators on $\mathcal H$.
Then $(S_1,\dots, S_{n-1},P)$ is called
\begin{itemize}
\item[(i)] a $\Gamma_n$-\textit{unitary} if $S_1,\dots, S_{n-1},P$
are normal operators and the Taylor joint spectrum $\sigma_T
(S_1,\dots, S_{n-1},P)$ is a subset of $b\Gamma_n$ ;

\item[(ii)] a $\Gamma_n$-isometry if there exists a Hilbert space
$\mathcal K \supseteq \mathcal H$ and a $\Gamma_n$-unitary
$(T_1,\dots,T_{n-1},U)$ on $\mathcal K$ such that $\mathcal H$ is
a joint invariant subspace of $S_1,\dots, S_{n-1},P$ and that
$(T_1|_{\mathcal H},\dots, T_{n-1}|_{\mathcal H},U|_{\mathcal
H})=(S_1,\dots, S_{n-1},P)$ ;

\item[(iii)] a $\Gamma_n$-co-isometry if the adjoint
$(S_1^*,\dots, S_{n-1}^*,P^*)$ is a $\Gamma_n$-isometry.
\end{itemize}

\end{defn}

Also we obtain from the literature (\cite{T:B}) the following
analogous classes of $\mathbb E$-contractions.

\begin{defn}
Let $A,B,P$ be commuting operators on $\mathcal H$. Then $(A,B,P)$
is called
\begin{itemize}
\item[(i)] a $\mathbb E$-\textit{unitary} if $A,B,P$ are normal
operators and the Taylor joint spectrum $\sigma_T (A,B,P)$ is a
subset of $b\Gamma_n$ ;

\item[(ii)] a $\mathbb E$-isometry if there exists a Hilbert space
$\mathcal K \supseteq \mathcal H$ and a $\mathbb E$-unitary
$(Q_1,Q_2,V)$ on $\mathcal K$ such that $\mathcal H$ is a joint
invariant subspace of $A,B,P$ and that $(Q_1|_{\mathcal H},
Q_2|_{\mathcal H},V|_{\mathcal H})=(A,B,P)$ ;

\item[(iii)] a $\mathbb E$-co-isometry if the adjoint $(A^*,
B^*,P^*)$ is a $\mathbb E$-isometry.
\end{itemize}

\end{defn}

The following theorems from \cite{sourav14} provide clear
descriptions of a $\Gamma_n$-unitary and a $\Gamma_n$-isometry.

\begin{thm}[\cite{sourav14}, Theorems 4.2 $\&$
4.4]\label{thm:gamma-ui} A commuting tuple of operators
$(S_1,\dots,S_{n-1},P)$ is a $\Gamma_n$-unitary $($or, a
$\Gamma_n$-isometry$)$ if and only if $(S_1,\dots,S_{n-1},P)$ is a
$\Gamma_n$-contraction and $P$ is a unitary $($isometry$)$.
\end{thm}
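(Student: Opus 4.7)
The plan is to handle each direction separately, with the unitary and isometry characterizations proceeding in parallel, leveraging the description $b\Gamma_n = \{(s_1,\dots,s_{n-1},p)\in\Gamma_n:|p|=1\}=\pi_n(\mathbb T^n)$ of the distinguished boundary. For the ``only if'' direction, if $(S_1,\dots,S_{n-1},P)$ is a $\Gamma_n$-unitary then the joint continuous functional calculus for commuting normals gives $\|f(S_1,\dots,P)\|=\sup_{\sigma_T}|f|\le\sup_{\Gamma_n}|f|$ for every polynomial $f$, so $\Gamma_n$ is a spectral set; the spectral mapping theorem sends $\sigma(P)=\pi_n(\sigma_T)\subset\mathbb T$, which together with $P$ normal makes $P$ unitary. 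For a $\Gamma_n$-isometry written as a restriction $(T_1|_{\mathcal H},\dots,U|_{\mathcal H})$ of a $\Gamma_n$-unitary to an invariant subspace, the spectral-set inequality descends to the restriction and the restricted unitary is an isometry.

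For the converse in the unitary case, assume $(S_1,\dots,S_{n-1},P)$ is a $\Gamma_n$-contraction with $P$ unitary. Then $\sigma_T\subset\Gamma_n$ with $\pi_n(\sigma_T)=\sigma(P)\subset\mathbb T$, so $\sigma_T\subset b\Gamma_n$. Fuglede--Putnam gives $[P^*,S_j]=0$ for every $j$. The key step is to promote the scalar identity $s_{n-i}=\overline{s_i}\,p$ on $b\Gamma_n$ to the operator identity $S_{n-i}=S_i^*P$ for each $i$, by applying the von Neumann inequality to polynomials calibrated so that their $\Gamma_n$-norm, equal to their $b\Gamma_n$-norm by the $\check{\textup{S}}$ilov property, forces $S_{n-i}-S_i^*P$ to vanish. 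Once this identity is in hand, a short calculation finishes the proof: directly one has $S_iS_i^*=(S_{n-i}^*P)(P^*S_{n-i})=S_{n-i}^*S_{n-i}$, while the commutativity $[S_i,S_{n-i}]=0$ together with $S_i=S_{n-i}^*P$ and $[P,S_j]=[P^*,S_j]=0$ yields $S_i^*S_i=P^*S_{n-i}S_i=P^*S_iS_{n-i}=S_{n-i}^*S_{n-i}$, giving normality of $S_i$; the same identities give $[S_i^*,S_j]=0$ for all $i,j$, so $(S_1,\dots,S_{n-1},P)$ is a commuting family of normals with joint spectrum in $b\Gamma_n$, i.e., a $\Gamma_n$-unitary.

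For the converse in the isometry case, given a $\Gamma_n$-contraction $(S_1,\dots,S_{n-1},P)$ with $P$ isometric, the plan is to construct a $\Gamma_n$-unitary extension to a larger space: extend $P$ to a unitary $U$ on some $\mathcal K\supseteq\mathcal H$ with $\mathcal H$ invariant under $U$, and simultaneously lift each $S_i$ to a commuting $T_i$ on $\mathcal K$ so that $(T_1,\dots,T_{n-1},U)$ is again a $\Gamma_n$-contraction. The unitary case just proved then makes $(T_1,\dots,T_{n-1},U)$ a $\Gamma_n$-unitary, whence $(S_1,\dots,S_{n-1},P)$ is a $\Gamma_n$-isometry by definition. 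The main obstacle is extracting the sharp operator identity $S_{n-i}=S_i^*P$ from the von Neumann inequality in the unitary converse: converting the algebraic relation $s_{n-i}=\overline{s_i}\,p$, which involves conjugation and so is not a polynomial identity, into an equality of operators rather than a mere norm bound is where the bulk of the analytic work sits; the isometry converse further rests on the existence of suitable commuting extensions that one draws from the $\Gamma_n$-contraction dilation literature.
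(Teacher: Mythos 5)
The paper itself does not prove this theorem; it imports it from the reference for Theorems 4.2 and 4.4 there, so your proposal must stand on its own, and it does not. The step you yourself flag as ``where the bulk of the analytic work sits'' --- converting the boundary relation $s_{n-i}=\overline{s_i}\,p$ into the operator identity $S_{n-i}=S_i^*P$ --- is the entire content of the forward implication, and you give no indication of which ``calibrated polynomials'' would accomplish it. Because the relation involves complex conjugation, no single polynomial von Neumann inequality encodes it; the known route (the one this paper itself runs in the proof of Theorem \ref{cd1}) is through the operator pencils $\Phi_i$ of \eqref{eq:1a}. Proposition \ref{lem:3} gives $\Phi_i(\alpha S_1,\dots,\alpha^nP)\geq 0$ for all $\alpha\in\overline{\mathbb D}$; when $P$ is unitary (or isometric) the term $\tilde n_i^{\,2}(I-P^*P)$ vanishes, adding the $i$-th and $(n-i)$-th pencils cancels $S_i^*S_i-S_{n-i}^*S_{n-i}$, and letting $\alpha,\beta$ range over $\pm1,\pm i$ in the resulting inequality $-\mathrm{Re}\,\alpha^i(S_i-S_{n-i}^*P)-\mathrm{Re}\,\beta^{n-i}(S_{n-i}-S_i^*P)\geq 0$ forces $S_i=S_{n-i}^*P$. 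Proposition \ref{lem:3} is itself a nontrivial consequence of the von Neumann inequality applied to rational functions of the form $(\tilde n_i\,p-s_{n-i})/(\tilde n_i-s_i)$, so a one-sentence gesture cannot replace it. Granting that identity, your Fuglede--Putnam step, the normality computation, and the spectral argument placing $\sigma_T$ in $b\Gamma_n$ are correct.

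The isometry converse has a second gap of the same kind: you assert that $P$ extends to a unitary $U$ and each $S_i$ lifts to a commuting $T_i$ with $(T_1,\dots,T_{n-1},U)$ still a $\Gamma_n$-contraction, deferring to ``the dilation literature''. Since rational dilation in fact fails on $\Gamma_n$ for $n\geq 3$ (the very subject of the cited reference), this cannot be waved through, and as stated it is close to circular: producing a commuting $\Gamma_n$-contractive extension with unitary last entry is essentially the assertion being proved. What rescues the architecture is not a dilation theorem but the elementary fact that every operator commuting with an isometry $P$ extends, with the same norm and with commutativity preserved, to an operator commuting with the minimal unitary extension $U$ of $P$; one then checks $\|f(T_1,\dots,T_{n-1},U)U^{*m}h\|=\|U^{*m}f(S_1,\dots,S_{n-1},P)h\|\leq\|f\|_{\infty,\Gamma_n}\|U^{*m}h\|$ for $h\in\mathcal H$ and concludes by density that the extension is again a $\Gamma_n$-contraction. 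Either supply that argument or route the isometric case through the Wold decomposition of $P$ as in Theorem \ref{gammaiso}; without one of these, the second half of the proof is missing.
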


Needless to mention that $(S_1,\dots,S_{n-1},P)$ is a
$\Gamma_n$-co-isometry if and only if $(S_1,\dots,S_{n-1},P)$ is a
$\Gamma_n$-contraction and $P$ is a co-isometry. So, it is evident
that the nature of a $\Gamma_n$-contraction
$(S_1,\dots,S_{n-1},P)$ is highly influenced by the nature of its
last component $P$. In \cite{sourav3}, the second named author of
this paper had shown that for a given $\Gamma_n$-contraction
$(S_1,\dots,S_{n-1},P)$ on $\mathcal H$, if $P=P|_{\mathcal
H_1}\oplus P|_{\mathcal H_2}$ is the canonical decomposition of
the contraction $P$ as in Theorem \ref{thm:candecomp} with respect
to $\mathcal H= \mathcal H_1 \oplus \mathcal H_2$ , then both
$\mathcal H_1, \mathcal H_2$ reduce $S_1,\dots,S_{n-1}$ and
$(S_1|_{\mathcal H_1},\dots,S_{n-1}|_{\mathcal H_1},P|_{\mathcal
H_1})$ is a $\Gamma_n$-unitary whereas $(S_1|_{\mathcal
H_2},\dots,S_{n-1}|_{\mathcal H_2},P|_{\mathcal H_2})$ is a
$\Gamma_n$-contraction for which $P|_{\mathcal H_2}$ is a c.n.u
contraction. This unique decomposition was named the
``\textit{canonical decomposition}" of a $\Gamma_n$-contraction
(see Theorem \ref{mainthm} in this paper). This naturally
motivated the author to define a c.n.u $\Gamma_n$-contraction to
be a $\Gamma_n$-contraction $(S_1,\dots, S_{n-1},P)$ for which $P$
is a c.n.u contraction and indeed such a definition is justified.
Taking cue from such dominant roles of $P$ in determining the
special classes of a $\Gamma_n$-contraction
$(S_1,\dots,S_{n-1},P)$ we are led to the following definitions.

\begin{defn}\label{def3}
    Let $(S_1, \dots, S_{n-1}, P)$ be a $\Gamma_n$-contraction on a Hilbert space $\mathcal
    H$. We say that $(S_1, \dots, S_{n-1},P)$ is
    \begin{itemize}
        \item[(i)] a c.n.u $\Gamma_n$-contraction if $P$ is a
        c.n.u contraction ;
        \item[(ii)] a c.n.i $\Gamma_n$-contraction if $P$ is a
        c.n.i contraction ;
        \item[(iii)] a \textit{weakly stable} $\Gamma_n$-contraction if $P$ is \textit{weakly
        stable}, that is, if $\langle P^nx,y \rangle \rightarrow 0$ as
        $n \rightarrow \infty$ for all $x, y \in \mathcal{H}$ ;
        \item[(iv)] a \textit{strongly stable} $\Gamma_n$-contraction if $P$ is \textit{strongly
        stable}, that is, if $\|P^nx\| \rightarrow 0$ as $n \rightarrow \infty$
        for all $x \in \mathcal{H}$ ;
        \item[(v)] a \textit{pure} $\Gamma_n$-contraction if
        $P^*$is strongly stable ;
        \item[(vi)] a $\mathcal{C}_{00}$ $\Gamma_n$-contraction if $P$ is
        $\mathcal{C}_{00}$-contraction, that is, both $P$ and $P^*$
        are strongly stable or in other word both $P$ and $P^*$
        are pure contractions ;
        \item[(vii)] a $\Gamma_n$-\textit{identity} if $P=I_{\mathcal
        H}$ and a \textit{completely non-identity}
        $\Gamma_n$-\textit{contraction} if there is no nontrivial
        proper subspace of $\mathcal H$ that reduces $P$ and on
        which $P$ is equal to the identity operator.
    \end{itemize}
\end{defn}

An analogue of Theorem \ref{thm:gamma-ui} holds for an $\mathbb
E$-contraction (see Theorems 5.4 and 5.7 in \cite{T:B}). Also
every $\mathbb E$-contraction admits a canonical decomposition
into an $\mathbb E$-unitary and a c.n.u $\mathbb E$-contraction
(Theorem 3.1 in \cite{S:P}). Therefore, it is legitimate to have
an analogue of Definition \ref{def3} for $\mathbb E$-contractions.

\begin{defn}\label{def3}
    Let $(A,B, P)$ be a $\mathbb E$-contraction on a Hilbert space $\mathcal
    H$. We say that $(A,B,P)$ is
    \begin{itemize}
        \item[(i)] a c.n.u $\mathbb E$-contraction if $P$ is a
        c.n.u contraction ;
        \item[(ii)] a c.n.i $\mathbb E$-contraction if $P$ is a
        c.n.i contraction ;
        \item[(iii)] a \textit{weakly stable} $\mathbb E$-contraction if $P$ is \textit{weakly
        stable}, that is, if $\langle P^nx,y \rangle \rightarrow 0$ as
        $n \rightarrow \infty$ for all $x, y \in \mathcal{H}$ ;
        \item[(iv)] a \textit{strongly stable} $\mathbb E$-contraction if $P$ is \textit{strongly
        stable}, that is, if $\|P^nx\| \rightarrow 0$ as $n \rightarrow \infty$
        for all $x \in \mathcal{H}$ ;
        \item[(v)] a \textit{pure} $\mathbb E$-contraction if
        $P^*$is strongly stable ;
        \item[(vi)] a $\mathcal{C}_{00}$ $\mathbb E$-contraction if $P$ is
        $\mathcal{C}_{00}$-contraction, that is, both $P$ and $P^*$
        are strongly stable or in other word both $P$ and $P^*$
        are pure contractions ;
        \item[(vii)] a $\mathbb E$-\textit{identity} if $P=I_{\mathcal
        H}$ and a \textit{completely non-identity}
        $\mathbb E$-\textit{contraction} if there is no nontrivial
        proper subspace of $\mathbb E$ that reduces $P$ and on
        which $P$ is equal to the identity operator.
    \end{itemize}
\end{defn}

\section{Preparatory results}\label{results}

We begin this section with a basic result from Chapter-3 of
\cite{V:P} which will be used frequently in the subsequent
sections.

\begin{lem}\label{VP1}
    Let $P,\, Q,\, A$ be operators on some Hilbert
    space $\mathcal{H}$ with $P$ and $Q$ being positive. Then
    $\begin{bmatrix}
    P & A\\
    A^* & Q
    \end{bmatrix}
    \geq 0$ if and only if $|\langle Ax, y \rangle|^2 \leq
    \langle Py, y \rangle \langle Qx, x \rangle$ for all $x, y$ in $\mathcal{H}$.
\end{lem}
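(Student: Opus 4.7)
The plan is to reduce the matrix positivity to the non-negativity of the quadratic form
\[
Q(u,v) := \left\langle \begin{bmatrix} P & A \\ A^* & Q \end{bmatrix} \binom{u}{v}, \binom{u}{v} \right\rangle = \langle Pu,u\rangle + 2\,\mathrm{Re}\,\langle Av, u\rangle + \langle Qv,v\rangle
\]
on $\mathcal H \oplus \mathcal H$, and then play with the vectors $u, v$ to extract a Cauchy--Schwarz-type estimate. So the whole argument is essentially a bookkeeping around this single identity.

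For the forward direction, I would fix $x, y \in \mathcal H$, write $\langle Ax, y\rangle = |\langle Ax, y\rangle|\, e^{i\theta}$, and plug in $u = y$, $v = -\lambda e^{-i\theta} x$ with a real parameter $\lambda \geq 0$. The cross term then collapses to $-2\lambda |\langle Ax, y\rangle|$, so the non-negativity of the block matrix becomes
\[
\lambda^2 \langle Qx,x\rangle - 2\lambda |\langle Ax,y\rangle| + \langle Py,y\rangle \geq 0 \qquad (\lambda \geq 0).
\]
This is a real quadratic in $\lambda$ (extended to all of $\mathbb R$ by the same estimate with $\lambda < 0$ giving the opposite sign choice), whose non-negativity forces the discriminant to be $\leq 0$, yielding exactly $|\langle Ax, y\rangle|^2 \leq \langle Py, y\rangle\, \langle Qx, x\rangle$. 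A small edge case is when $\langle Qx, x\rangle = 0$; there one either appeals to a limiting argument or notices directly that both sides of the quadratic inequality force $|\langle Ax, y\rangle| = 0$.

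For the reverse direction, the hypothesis gives $|\langle Av, u\rangle| \leq \sqrt{\langle Pu,u\rangle}\,\sqrt{\langle Qv,v\rangle}$ for all $u, v \in \mathcal H$. Substituting into the identity above, I would estimate
\[
Q(u,v) \geq \langle Pu,u\rangle - 2\sqrt{\langle Pu,u\rangle}\,\sqrt{\langle Qv,v\rangle} + \langle Qv,v\rangle = \bigl(\sqrt{\langle Pu,u\rangle} - \sqrt{\langle Qv,v\rangle}\bigr)^2 \geq 0,
\]
which is precisely the positivity of the block matrix.

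There is essentially no genuine obstacle; the only point requiring a hint of care is keeping the phase-adjusted scalar in the test vector correct so that the cross term becomes real and negative, and handling the degenerate case $\langle Qx, x\rangle = 0$ in the quadratic step of the forward direction. Everything else is a direct unwinding of the definition of a positive operator applied to the specific block form.
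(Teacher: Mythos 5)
Your proof is correct. Note that the paper itself gives no proof of this lemma --- it is quoted as a known fact from Chapter 3 of Paulsen's \emph{Completely Bounded Maps and Operator Algebras} --- and your argument is precisely the standard one: expand the quadratic form $\langle Pu,u\rangle + 2\,\mathrm{Re}\,\langle Av,u\rangle + \langle Qv,v\rangle$ on $\mathcal H\oplus\mathcal H$, use the phase-adjusted test vector $v=-\lambda e^{-i\theta}x$ and the discriminant of the resulting real quadratic for the forward direction, and the elementary bound $a^2-2ab+b^2\ge 0$ for the converse. Your handling of the degenerate case $\langle Qx,x\rangle=0$ (letting $\lambda\to\infty$) closes the only gap in the discriminant step, so nothing is missing.
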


A straight-forward corollary of Lemma \ref{VP1} is the following.

\begin{cor}\label{cor3.1}
    Let $P,\, Q,\, A$ be operators on some Hilbert space $\mathcal{H}$ with $P$ and $Q$ being positive. If
    $\begin{bmatrix}
    P & A\\
    A^* & Q
    \end{bmatrix}
    \geq 0$ and $P = 0$, then $A = 0$.
\end{cor}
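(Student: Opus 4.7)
The plan is to apply Lemma \ref{VP1} directly and then exploit the hypothesis $P=0$ to force the off-diagonal block to vanish via the resulting Cauchy--Schwarz-type inequality.

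First, I would invoke Lemma \ref{VP1}: since the block matrix is positive with $P,Q$ positive, we obtain the inequality
\[
|\langle Ax, y \rangle|^2 \leq \langle Py, y \rangle \langle Qx, x \rangle \quad \text{for all } x,y \in \mathcal{H}.
\]
Next, I would substitute the hypothesis $P=0$ into this inequality. This gives $\langle Py, y\rangle = 0$ for every $y \in \mathcal{H}$, so the right-hand side vanishes identically and the inequality collapses to
\[
|\langle Ax, y\rangle|^2 \leq 0 \quad \text{for all } x,y \in \mathcal{H},
\]
which forces $\langle Ax, y\rangle = 0$ for all $x,y$. Finally, I would conclude that $A=0$, since a bounded operator whose sesquilinear form is identically zero is the zero operator (take $y = Ax$ to get $\|Ax\|^2 = 0$ for every $x$).

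There is essentially no obstacle here; the entire content of the corollary is packaged in Lemma \ref{VP1}, and the role of the hypothesis $P=0$ is simply to annihilate one factor in the bound. The only thing worth remarking on is that the argument does not require $Q$ to be nonzero or invertible, and it is symmetric in the obvious sense: the same reasoning with $Q=0$ in place of $P=0$ would again yield $A=0$.
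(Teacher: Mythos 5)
Your proof is correct and is precisely the argument the paper intends: the authors state the corollary as "a straight-forward corollary of Lemma \ref{VP1}" without writing out details, and your application of that lemma with $P=0$ to annihilate the bound on $|\langle Ax,y\rangle|^2$ is the same route.
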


\subsection{Results about $\Gamma_n$-contractions}

We start with a result from \cite{sourav3} that simplifies the
definition of a $\Gamma_n$-contraction by an application of
Oka-Weil theorem and the underlying reason is the fact that
$\Gamma_n$ is polynomially convex.

\begin{lem}[{\cite{sourav3}, Lemma 2.4}]\label{SP1}
    A commuting tuple of bounded operators
    $(S_1, \dots, S_{n-1}, P)$ is a $\Gamma_n$-contraction if and only if
    \[
    \|f(S_1, \dots, S_{n-1}, P)\| \leq \|f\|_{\infty, \Gamma_n}
    \]
    for any holomorphic polynomially $f$ in $n$-variables.
\end{lem}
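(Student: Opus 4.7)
The plan is to prove the two implications separately, with the forward one essentially immediate and the reverse one relying on polynomial convexity of $\Gamma_n$ together with the Oka--Weil approximation theorem. For the forward direction, if $(S_1,\dots,S_{n-1},P)$ is a $\Gamma_n$-contraction, then by definition the von Neumann inequality holds for every rational function with poles off $\Gamma_n$; specializing to holomorphic polynomials (which have no poles) gives the stated inequality at once.

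For the converse, assume $\|p(S_1,\dots,S_{n-1},P)\|\leq \|p\|_{\infty,\Gamma_n}$ for every polynomial $p$. Two things must be established: (a) the Taylor joint spectrum $\sigma_T(S_1,\dots,S_{n-1},P)$ is contained in $\Gamma_n$, and (b) the estimate extends from polynomials to all rational functions with poles off $\Gamma_n$. For (a), I would invoke the spectral mapping property of the Taylor spectrum: for any $\lambda \in \sigma_T(S_1,\dots,S_{n-1},P)$ and any polynomial $p$ one has $p(\lambda)\in \sigma(p(S_1,\dots,S_{n-1},P))$, so
\[
|p(\lambda)|\ \leq\ r(p(S_1,\dots,S_{n-1},P))\ \leq\ \|p(S_1,\dots,S_{n-1},P)\|\ \leq\ \|p\|_{\infty,\Gamma_n}.
\]
This places $\lambda$ in the polynomial convex hull of $\Gamma_n$, which coincides with $\Gamma_n$ itself by the polynomial convexity noted in Section~1. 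Hence $\sigma_T(S_1,\dots,S_{n-1},P)\subseteq \Gamma_n$.

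For (b), let $f$ be rational with poles off $\Gamma_n$, so $f$ is holomorphic on an open set containing $\Gamma_n$. Polynomial convexity of $\Gamma_n$ permits an application of the Oka--Weil theorem, producing a sequence of polynomials $p_k$ that converges to $f$ uniformly on a neighborhood of $\Gamma_n$ (and in particular uniformly on $\Gamma_n$). Because $\sigma_T(S_1,\dots,S_{n-1},P)\subseteq \Gamma_n$ by (a), the Taylor analytic functional calculus is defined on $f$ and is continuous with respect to uniform convergence on neighborhoods of the spectrum, so $p_k(S_1,\dots,S_{n-1},P) \to f(S_1,\dots,S_{n-1},P)$ in operator norm. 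Passing to the limit in the bound $\|p_k(S_1,\dots,S_{n-1},P)\|\leq \|p_k\|_{\infty,\Gamma_n}$ yields the required inequality $\|f(S_1,\dots,S_{n-1},P)\|\leq \|f\|_{\infty,\Gamma_n}$.

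The only genuinely substantive step is the appeal to Oka--Weil, which is precisely where polynomial convexity of $\Gamma_n$ is indispensable: without it, rational functions holomorphic near $\Gamma_n$ need not admit polynomial approximations, and the transfer of the norm estimate from polynomials to rational functions would break down. The remaining pieces, namely the use of the Taylor spectral mapping theorem and the continuity of the analytic functional calculus, are standard and pose no real difficulty.
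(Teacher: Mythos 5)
Your argument is correct and follows exactly the route the paper indicates for this lemma: the forward direction is immediate since polynomials are rational functions without poles, and the converse rests on polynomial convexity of $\Gamma_n$ (to place the Taylor spectrum inside $\Gamma_n$ via the spectral mapping theorem) together with the Oka--Weil theorem (to transfer the norm bound from polynomials to rational functions holomorphic near $\Gamma_n$). The paper itself only cites this result from \cite{sourav3} and notes that it is ``an application of Oka--Weil theorem'' using the polynomial convexity of $\Gamma_n$, which is precisely your strategy.
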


The next theorem provides characterizations for a
$\Gamma_n$-unitary.
\begin{thm}[{\cite{S:B}, Theorem $4.2$}]\label{gammauni}
    Let $ S_1, \dots, S_{n-1} \text{ and }P $ be commuting
    operators on a Hilbert space $\mathcal{H}$. Then the following are equivalent:
    \begin{enumerate}
        \item $ (S_1, \dots, S_{n-1},P) $ is a $ \Gamma_n $-unitary;
        \item there exist commuting unitaries $U_1,\dots, U_n$ on
        $\mathcal H$ such that
        \[
         \pi_{n}(U_1,\dots,U_n)= (S_1, \dots, S_{n-1},P)\,;
        \]
        \item $ P $ is unitary, $ S_i = S_{n-i}^*P $ for each $\, i = 1, \dots, n-1 $ and
        $ \bigg(\dfrac{n-1}{n}S_1,\\ \dfrac{n-2}{n}S_2, \dots, \dfrac{1}{n}S_{n-1}\bigg) $
        is a $ \Gamma_{n-1} $-contraction.
    \end{enumerate}

\end{thm}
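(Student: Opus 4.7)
The plan is to prove the cycle $(2) \Rightarrow (1) \Rightarrow (3) \Rightarrow (1)$ together with $(1) \Rightarrow (2)$. The direction $(2) \Rightarrow (1)$ is immediate from the spectral mapping theorem for the Taylor joint spectrum: if $U_1, \dots, U_n$ are commuting unitaries on $\mathcal H$, then each $S_i = e_i(U_1, \dots, U_n)$ and $P = e_n(U_1, \dots, U_n)$ are polynomials in commuting normal operators, hence commuting normals themselves, and
\[
\sigma_T(S_1, \dots, S_{n-1}, P) = \pi_n\bigl(\sigma_T(U_1, \dots, U_n)\bigr) \subseteq \pi_n(\mathbb T^n) = b\Gamma_n.
\]

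For $(1) \Rightarrow (2)$, I would lift the joint spectral measure along the symmetrization. Because $\pi_n : \mathbb T^n \to b\Gamma_n$ is a continuous surjection of compact metric spaces with fibres of cardinality at most $n!$, a standard measurable selection theorem yields a Borel section $\sigma : b\Gamma_n \to \mathbb T^n$ satisfying $\pi_n \circ \sigma = \mathrm{id}$. If $E$ is the joint spectral measure of $(S_1, \dots, S_{n-1}, P)$ on $b\Gamma_n$, then $F(A) := E(\sigma^{-1}(A))$ is a spectral measure on $\mathbb T^n$; the coordinate unitaries $U_j := \int z_j\, dF(z)$ commute, and the identity $\pi_n \circ \sigma = \mathrm{id}$ forces $\pi_n(U_1, \dots, U_n) = (S_1, \dots, S_{n-1}, P)$.

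For $(1) \Rightarrow (3)$ I would verify the three assertions separately. Normality of $P$ together with $\sigma(P) \subseteq \{|p|=1\}$, which is the image of $b\Gamma_n$ under the last-coordinate projection, makes $P$ unitary. The relation $S_i = S_{n-i}^* P$ comes from the scalar identity $e_i(z) = \overline{e_{n-i}(z)}\, e_n(z)$ on $\mathbb T^n$, which follows from $\overline z_j = z_j^{-1}$ and the combinatorial identity $e_{n-i}(z^{-1}) = e_i(z)/e_n(z)$; it descends to $b\Gamma_n$ and, via the joint functional calculus of the commuting normal tuple, to the operator relation. For the last item, for any $(z_1, \dots, z_n) \in \mathbb T^n$ the Gauss-Lucas theorem applied to $f(\lambda) = \prod_j(\lambda - z_j)$ places the roots $w_1, \dots, w_{n-1}$ of $f'/n$ in $\overline{\mathbb D}$; comparing coefficients shows $e_k(w) = \tfrac{n-k}{n} e_k(z)$, so $\bigl(\tfrac{n-1}{n}e_1(z), \dots, \tfrac{1}{n}e_{n-1}(z)\bigr) \in \Gamma_{n-1}$, and integrating this pointwise inclusion against the joint spectral measure shows that $\bigl(\tfrac{n-1}{n}S_1, \dots, \tfrac{1}{n}S_{n-1}\bigr)$ is a $\Gamma_{n-1}$-contraction.

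The main obstacle is $(3) \Rightarrow (1)$. In view of Theorem \ref{thm:gamma-ui}, it suffices under the hypotheses of (3) to show that $(S_1, \dots, S_{n-1}, P)$ is a $\Gamma_n$-contraction, and Lemma \ref{SP1} reduces this to the von Neumann inequality $\|g(S_1, \dots, S_{n-1}, P)\| \leq \|g\|_{\infty, \Gamma_n}$ for holomorphic polynomials $g$. My plan is to exploit the spectral resolution $P = \int_{\mathbb T} \lambda\, dE(\lambda)$ together with Fuglede's theorem, which ensures that each $S_i$ commutes with the full Borel functional calculus of $P$, so that one obtains the slice bound
\[
\|g(S_1, \dots, S_{n-1}, P)\| \leq \sup_{\lambda \in \sigma(P)} \|g(S_1, \dots, S_{n-1}, \lambda)\|.
\]
The heart of the argument is then to bound each slice by the supremum of $|g|$ over the $\lambda$-slice of $\Gamma_n$: the $\Gamma_{n-1}$-contractivity of $\bigl(\tfrac{n-1}{n}S_1, \dots, \tfrac{1}{n}S_{n-1}\bigr)$ gives a bound in terms of a $\Gamma_{n-1}$-supremum of a rescaled polynomial, and the hypothesis $S_i = S_{n-i}^* P$ encodes on the spectral slice $\{P = \lambda\}$ exactly the involutive symmetry $s_i = \overline{s_{n-i}}\lambda$ which, combined with the Gauss-Lucas rescaling, characterises the $\lambda$-slice of $b\Gamma_n$. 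The delicate point is carrying this slicing out rigorously in the operator setting — aligning the rescaling and the involutive symmetry so that the $\Gamma_{n-1}$-bound passes uniformly in $\lambda$ to a genuine $\Gamma_n$-bound.
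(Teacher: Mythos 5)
This theorem is quoted in the paper from \cite{S:B} without proof, so there is no in-paper argument to compare against; your proposal has to stand on its own. The directions you actually carry out are fine: $(2)\Rightarrow(1)$ by the spectral mapping theorem, $(1)\Rightarrow(2)$ by choosing a Borel section of $\pi_n\colon\mathbb T^n\to b\Gamma_n$ and pushing forward the joint spectral measure (this is the standard Agler--Young/Biswas--Shyam Roy lifting), and $(1)\Rightarrow(3)$ via the identity $e_i(z)=\overline{e_{n-i}(z)}\,e_n(z)$ on $\mathbb T^n$ and the Gauss--Lucas rescaling $e_k(w)=\tfrac{n-k}{n}e_k(z)$.

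The genuine gap is $(3)\Rightarrow(1)$, which is the substantive content of the theorem and which you leave as a plan with the key step explicitly unexecuted. Two concrete problems with the sketch. First, the displayed slice bound $\|g(S_1,\dots,S_{n-1},P)\|\leq\sup_{\lambda\in\sigma(P)}\|g(S_1,\dots,S_{n-1},\lambda)\|$ has the full operators $S_i$ on the right, whereas the hypotheses of (3) only localize usefully on the spectral slices of $P$, where they read $S_i(\lambda)=\lambda\,S_{n-i}(\lambda)^*$; what you actually need is the direct-integral bound $\operatorname{ess\,sup}_\lambda\|g(S_1(\lambda),\dots,S_{n-1}(\lambda),\lambda)\|$ with each $S_i$ decomposable over the spectral measure of $P$, and Fuglede alone does not deliver that (nor does it work without separability assumptions). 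Second, and more seriously, the claim that the symmetry $s_i=\overline{s_{n-i}}\,\lambda$ combined with the Gauss--Lucas rescaling "characterises the $\lambda$-slice of $b\Gamma_n$" is exactly the scalar case of the implication $(3)\Rightarrow(1)$ that you are trying to prove, so invoking it here is circular. Even after slicing, one must still show that a tuple satisfying $S_i(\lambda)=\lambda\,S_{n-i}(\lambda)^*$ together with the $\Gamma_{n-1}$-contractivity of the rescaled tuple is normal with joint spectrum in the fibre of $b\Gamma_n$ (equivalently, satisfies von Neumann's inequality over $\Gamma_n$); these relations give, e.g., $S_iS_i^*=S_{n-i}^*S_{n-i}$ and $S_i^*S_i=S_{n-i}S_{n-i}^*$, but not normality in any obvious way. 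That argument, which is the actual work done in \cite{S:B}, is missing from your proposal.
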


\noindent The following result is an analogue of Theorem
\ref{gammauni} for $\Gamma_n$-isometries.

\begin{thm}[{\cite{S:B}, Theorem 4.12}]\label{gammaiso}
    Let $ S_1, \dots, S_{n-1},P $ be commuting operators on a Hilbert space
    $\mathcal{H}$. Then the following are equivalent:
    \begin{enumerate}
        \item $ (S_1, \dots, S_{n-1},P) $ is a $ \Gamma_n $-isometry,
        \item $ P $ is isometry, $ S_i = S_{n-i}^*P $ for each $ i = 1, \dots, n-1 $ and
        $ \bigg(\dfrac{n-1}{n}S_1, \dfrac{n-2}{n}S_2,
        \dots, \dfrac{1}{n}S_{n-1}\bigg) $ is a $ \Gamma_{n-1} $-contraction.

    \end{enumerate}

\end{thm}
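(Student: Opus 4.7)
I would treat the two implications separately, using Theorem \ref{gammauni} as the backbone in each direction.

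\emph{For} $(1) \Rightarrow (2)$: By the definition of a $\Gamma_n$-isometry, I would lift $(S_1,\dots,S_{n-1},P)$ to a $\Gamma_n$-unitary $(T_1,\dots,T_{n-1},U)$ on some $\mathcal{K}\supseteq\mathcal{H}$ with $\mathcal{H}$ joint invariant. Then $P=U|_{\mathcal{H}}$ is an isometry as a restriction of a unitary to an invariant subspace. Theorem \ref{gammauni} gives $T_i=T_{n-i}^{*}U$ on $\mathcal{K}$ and that $\bigl(\tfrac{n-1}{n}T_1,\dots,\tfrac{1}{n}T_{n-1}\bigr)$ is a $\Gamma_{n-1}$-contraction. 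To push the product relation down to $\mathcal{H}$, I would compute, for $x,y\in\mathcal{H}$,
\[
\langle S_ix,y\rangle=\langle T_ix,y\rangle=\langle T_{n-i}^{*}Ux,y\rangle=\langle Ux,T_{n-i}y\rangle=\langle Px,S_{n-i}y\rangle,
\]
using $Ux=Px\in\mathcal{H}$ and $T_{n-i}y=S_{n-i}y\in\mathcal{H}$, which yields $S_i=S_{n-i}^{*}P$. The $\Gamma_{n-1}$-contraction property descends to $\mathcal{H}$ because restriction to a joint invariant subspace preserves von Neumann inequalities for polynomials.

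\emph{For} $(2) \Rightarrow (1)$: The plan is to exhibit a $\Gamma_n$-unitary extension on a dilation space and then invoke Theorem \ref{gammauni}. Let $U$ on $\mathcal{K}\supseteq\mathcal{H}$ be the minimal unitary extension of the isometry $P$, so every $v\in\mathcal{K}$ admits a representation $v=U^{-k}h$ with $h\in\mathcal{H}$ for $k$ sufficiently large. I would define
\[
T_i(U^{-k}h):=U^{-k}S_ih,\qquad h\in\mathcal{H},\ k\geq 0,
\]
and extend by linearity. The commutation $S_iP=PS_i$ makes this well-defined on the dense union of the $U^{-k}\mathcal{H}$, and because $U$ is unitary the norm is controlled by $\|S_i\|$, giving a bounded extension with $T_i|_{\mathcal{H}}=S_i$. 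By construction $T_iU=UT_i$ and $T_iT_j=T_jT_i$.

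It remains to verify the hypotheses of Theorem \ref{gammauni} for $(T_1,\dots,T_{n-1},U)$: $U$ is unitary, $T_i=T_{n-i}^{*}U$ on $\mathcal{K}$, and $\bigl(\tfrac{n-1}{n}T_1,\dots,\tfrac{1}{n}T_{n-1}\bigr)$ is a $\Gamma_{n-1}$-contraction. The adjoint relation would be tested on generators: the identity $\langle T_iv,w\rangle=\langle Uv,T_{n-i}w\rangle$ with $v=U^{-k}h$, $w=U^{-l}g$ reduces, after sliding powers of $U$ across, to $\langle S_ih',g'\rangle=\langle Ph',S_{n-i}g'\rangle$ on $\mathcal{H}$, which is exactly the given $S_i=S_{n-i}^{*}P$. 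For the $\Gamma_{n-1}$-contraction condition, normalizing to a common exponent $k$ writes any $v\in\mathcal{K}$ as $U^{-k}h'$ with $h'=\sum_{j}P^{k-k_{j}}h_{j}\in\mathcal{H}$, so that $q(T_1,\dots,T_{n-1})v=U^{-k}q(S_1,\dots,S_{n-1})h'$ for every polynomial $q$, and isometry of $U^{-k}$ transfers the bound from the hypothesis on $(S_1,\dots,S_{n-1})$ to all of $\mathcal{K}$. The main obstacle is precisely this passage of the $\Gamma_{n-1}$-contraction property from $\mathcal{H}$ to $\mathcal{K}$ together with the well-definedness and adjoint identity for the extensions $T_i$; the normalization-to-common-exponent trick is the technical heart of the argument. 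Once these are in place, Theorem \ref{gammauni} identifies $(T_1,\dots,T_{n-1},U)$ as a $\Gamma_n$-unitary, exhibiting $(S_1,\dots,S_{n-1},P)$ as a $\Gamma_n$-isometry.
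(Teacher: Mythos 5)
The paper does not prove this statement: it is quoted verbatim as a preparatory result from \cite{S:B} (Theorem 4.12 there), so there is no in-paper proof to compare against. Your argument is essentially the standard one used in that source and in the Agler--Young tradition --- restriction of a $\Gamma_n$-unitary for $(1)\Rightarrow(2)$, and for $(2)\Rightarrow(1)$ the minimal unitary extension of the isometry $P$ together with the extension $T_i(U^{-k}h)=U^{-k}S_ih$, the well-definedness and adjoint identity checked on the increasing union $\bigcup_k U^{-k}\mathcal H$, and an appeal to Theorem \ref{gammauni}(3) --- and it is sound. The only imprecision is the phrase ``every $v\in\mathcal K$ admits a representation $v=U^{-k}h$'': this holds only on the dense union, not on all of $\mathcal K$, but since all your identities and norm bounds are verified there and the operators involved are bounded, the conclusions extend by continuity.
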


We imitate here a few sentences (until Proposition \ref{lem:3})
from \cite{sourav6} to recall $n-1$ operator pencils which will
determine the existence and uniqueness of the so called
$\ft$-tuple associated with a $\Gamma_n$-contraction. In
\cite{sourav3}, the second named author of this article introduced
the following $n-1$ operator pencils $\Phi_1,\dots,\Phi_{n-1}$ to
analyze the structure of a $\Gamma_n$-contraction
$(S_1,\dots,S_{n-1},P)$:
\begin{align}
\Phi_{i}(S_1,\dots, S_{n-1},P) &= (\tilde n_i-S_i)^*(\tilde
n_i-S_i)-(\tilde n_i P-S_{n-i})^*(\tilde n_i P-S_{n-i}) \notag
\\&
={\tilde n_i}^2(I-P^*P)+(S_i^*S_i-S_{n-i}^*S_{n-i})-\tilde n_i(S_i-S_{n-i}^*P) \notag \\
& \quad \quad -\tilde n_i(S_i^*-P^*S_{n-i})\,, \quad \quad \text{
where } \tilde n_i= \binom{n}{i} \label{eq:1a}.
\end{align}
We mention here to the readers that while defining $\Phi_i$ in
\cite{sourav3}, $\tilde n_i$ was mistakenly displayed as $n$ and
that was a typographical error. From the definition it is clear
that in particular when $S_1,\dots,S_{n-1}, P$ are scalars, i.e,
points in $\Gamma_n$, the above operator pencils take the
following form for each $i$ :
\begin{align}
\Phi_{i}(s_1,\dots,s_{n-1},p) & = {\tilde
n_i}^2(1-|p|^2)+(|s_i|^2-|s_{n-i}|^2)-\tilde
n_i(s_i-\bar{s}_{n-i}p) \notag \\ & \quad \quad -\tilde
n_i(\bar{s}_i-\bar{p}s_{n-i}). \label{eqn:2a}
\end{align}
The following result appeared in \cite{sourav3} and is important
in the context of this paper.

\begin{prop}[Proposition 2.6, \cite{sourav3}]\label{lem:3}
Let $(S_1,\dots,S_{n-1},P)$ be a $\Gamma_n$-contraction. Then for
$i=1,\dots,n-1,\; \Phi_i(\alpha
S_1,\dots,\alpha^{n-1}S_{n-1},\alpha^n P)\geq 0$ for all $\alpha
\in\overline{\mathbb D}$.
\end{prop}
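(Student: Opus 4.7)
The plan is to apply the spectral set property of $(S_1,\dots,S_{n-1},P)$ to a carefully chosen rational function, reducing the operator positivity to a scalar bound on $\Gamma_n$ that can be verified directly on the polydisc. The scalar ingredient needed is that for every $w = (w_1,\dots,w_n) \in \overline{\mathbb D}^n$,
\[
|\tilde n_i - \sigma_i(w)| \;\geq\; |\tilde n_i\, w_1\cdots w_n - \sigma_{n-i}(w)|,
\]
where $\sigma_k$ is the $k$-th elementary symmetric polynomial. Since $|\sigma_i(w)| \leq \tilde n_i$ on $\overline{\mathbb D}^n$, with strict inequality whenever some $|w_j|<1$, the rational function
\[
F(w) \;=\; \frac{\tilde n_i\, w_1\cdots w_n - \sigma_{n-i}(w)}{\tilde n_i - \sigma_i(w)}
\]
is holomorphic on $\mathbb D^n$. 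On the distinguished boundary $\mathbb T^n$ the identity $1/w_j = \bar w_j$ yields $\sigma_{n-i}(w) = (w_1\cdots w_n)\,\overline{\sigma_i(w)}$, and hence $|F|=1$ wherever the denominator is nonzero. Iterating the one-variable maximum modulus principle along slices of the form $\overline{\mathbb D}^k \times \mathbb D^{n-k}$, on which the denominator stays away from $0$, then yields $|F|\leq 1$ throughout $\mathbb D^n$.

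To lift the scalar bound to operators, I would fix $\alpha\in\mathbb D$ with $|\alpha|<1$ and set
\[
g_{i,\alpha}(s_1,\dots,s_{n-1},p) \;=\; \frac{\tilde n_i\, \alpha^n p - \alpha^{n-i} s_{n-i}}{\tilde n_i - \alpha^i s_i}.
\]
On $\Gamma_n$ the denominator has modulus at least $\tilde n_i(1-|\alpha|^i)>0$, so $g_{i,\alpha}$ is a rational function with poles off $\Gamma_n$; substituting $w_j = \alpha z_j$ in the scalar inequality gives $\|g_{i,\alpha}\|_{\infty,\Gamma_n}\leq 1$. The spectral set property of $\Gamma_n$ for $(S_1,\dots,S_{n-1},P)$ then forces $\|g_{i,\alpha}(S_1,\dots,S_{n-1},P)\|\leq 1$. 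Because $\sigma_T(S_1,\dots,S_{n-1},P)\subseteq\Gamma_n$ confines $\sigma(\alpha^i S_i)$ to the disc of radius $|\alpha|^i\tilde n_i<\tilde n_i$, the operator $\tilde n_i - \alpha^i S_i$ is invertible, and sandwiching the inequality $g_{i,\alpha}(S_1,\dots,P)^* g_{i,\alpha}(S_1,\dots,P)\leq I$ between $(\tilde n_i - \alpha^i S_i)^*$ on the left and $(\tilde n_i - \alpha^i S_i)$ on the right rearranges to
\[
(\tilde n_i - \alpha^i S_i)^*(\tilde n_i - \alpha^i S_i) \;\geq\; (\tilde n_i \alpha^n P - \alpha^{n-i} S_{n-i})^*(\tilde n_i \alpha^n P - \alpha^{n-i} S_{n-i}),
\]
which is precisely $\Phi_i(\alpha S_1,\alpha^2 S_2,\dots,\alpha^n P)\geq 0$. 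The boundary case $|\alpha|=1$ will follow by approximating with $|\alpha_k|<1$, $\alpha_k\to\alpha$, and using that $\Phi_i(\alpha S_1,\dots,\alpha^n P)$ depends norm-continuously on $\alpha$.

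The main obstacle is the scalar step: $F$ has removable singularities on $\mathbb T^n$ (for instance at $(1,\dots,1)$, where numerator and denominator vanish together), so the polydisc maximum modulus principle cannot be applied to $F$ on the closed polydisc directly. The iterated slice argument sketched above bypasses this by letting only one variable at a time approach $\mathbb T$, keeping the remaining coordinates inside $\mathbb D$ where the denominator is uniformly bounded away from zero; the operator step in turn avoids the boundary entirely by imposing $|\alpha|<1$, with continuity handling $|\alpha|=1$ at the end.
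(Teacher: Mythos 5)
The paper does not actually prove Proposition \ref{lem:3}; it imports it from \cite{sourav3}, and the argument there is exactly the route you take: apply the von Neumann inequality for $\Gamma_n$ to the rational functions $g_{i,\alpha}$ and conjugate by the invertible operator $\tilde n_i-\alpha^iS_i$. Your operator-theoretic half is correct as written --- the lower bound $|\tilde n_i-\alpha^is_i|\geq\tilde n_i(1-|\alpha|^i)$ keeps the poles off $\Gamma_n$, the sandwich $N^*N\leq D^*D$ is legitimate because all the operators commute, and norm-continuity in $\alpha$ handles $|\alpha|=1$.

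The gap is in the scalar step, at precisely the place you try to wave past. Your iteration yields $\sup_{\mathbb D^n}|F|\leq\sup_{\mathbb T\times\mathbb D^{n-1}}|F|\leq\cdots\leq\sup_{\mathbb T^{n-1}\times\mathbb D}|F|$, each step valid because at least one coordinate stays in $\mathbb D$ and the denominator is then nonvanishing on the closed disc being swept. But the final bound $\sup_{\mathbb T^{n-1}\times\mathbb D}|F|\leq1$ is \emph{not} covered by this mechanism: for fixed $w'\in\mathbb T^{n-1}$ the slice function $w_n\mapsto F(w',w_n)$ has denominator $\tilde n_i-\sigma_i(w')-w_n\,\sigma_{i-1}(w')$, which can vanish at a point of $\mathbb T$ (take $w'=(1,\dots,1)$, $w_n=1$), so ``unimodular boundary values off one point'' does not by itself force $|F|\leq1$ inside --- a priori the slice function could blow up at that boundary point. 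The repair is short and should be written out: on such a slice $F$ is a ratio of two affine functions of $w_n$, hence a constant or a M\"obius map. A nonconstant M\"obius map with a pole on $\mathbb T$ carries $\mathbb T$ onto a line through $\infty$ in the Riemann sphere, which cannot contain the infinite set $F(\mathbb T\setminus\{\text{pole}\})\subseteq\mathbb T$; so either the denominator has no zero in $\overline{\mathbb D}$ (and the ordinary maximum principle applies) or the numerator shares the boundary zero and the slice function is a unimodular constant. With that sentence added, the inequality $|\tilde n_i-\sigma_i(w)|\geq|\tilde n_i\,w_1\cdots w_n-\sigma_{n-i}(w)|$ holds on $\mathbb D^n$ and the rest of your proof closes.
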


The positivity of the operator pencils $\Phi_i$ determines a
unique operator tuple $(A_1,\dots, A_{n-1})$ associated with a
$\Gamma_n$-contraction $(S_1,\dots, S_{n-1},P)$. We name
$(A_1,\dots, A_{n-1})$ \textit{the fundamental operator tuple} or
in short the $\ft$-\textit{tuple} of $(S_1,\dots, S_{n-1},P)$ and
the underlying reason is that it plays the central role in every
section of operator theory on the symmetrized polyisc. We call
this result the existence-uniqueness theorem for $\ft$-tuples and
the result appeared in article \cite{sourav6} by the second named
author.

\begin{thm}[\cite{sourav6}, Theorem 3.3]\label{existence-uniqueness} Let
$(S_1,\dots,S_{n-1},P)$ be a $\Gamma_n$-contraction on a Hilbert
space $\mathcal H$. Then there are unique operators
$A_1,\dots,A_{n-1}\in\mathcal B(\mathcal D_P)$ such that
\[
S_i-S_{n-i}^*P=D_PA_iD_P \text{ for } i=1,\dots,n-1.
\]
Moreover, for each $i$ and for all $z\in \mathbb T$, $\omega
(A_i+A_{n-i}z)\leq \tilde n_i$. $[\tilde n_i= \binom{n}{i}]$.
\end{thm}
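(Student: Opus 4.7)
The plan is to extract from the operator pencils $\Phi_i$ a single pointwise estimate that delivers both halves of the theorem. By Proposition \ref{lem:3}, $\Phi_i(\alpha S_1,\dots,\alpha^n P)\ge 0$ for every $\alpha\in\overline{\mathbb D}$; restricting to $|\alpha|=1$ and setting $X_i:=S_i-S_{n-i}^*P$, a direct computation collapses this to
\[
\tilde n_i^2 D_P^2+(S_i^*S_i-S_{n-i}^*S_{n-i})-\tilde n_i\bigl(\alpha^iX_i+\bar\alpha^i X_i^*\bigr)\ge 0\qquad(\alpha\in\mathbb T).
\]
Pairing with $h\in\mathcal H$ and choosing $\alpha\in\mathbb T$ so that $\alpha^i$ aligns with the phase of $\overline{\langle X_ih,h\rangle}$ (possible because $\alpha\mapsto\alpha^i$ is onto $\mathbb T$ for $i\ge 1$) yields the scalar inequality
\[
2\tilde n_i\,|\langle X_ih,h\rangle|\le \tilde n_i^2\|D_Ph\|^2+\|S_ih\|^2-\|S_{n-i}h\|^2.
\]
Writing the analogous inequality coming from $\Phi_{n-i}$, and exploiting both $\tilde n_{n-i}=\tilde n_i$ and the sign reversal of the indefinite middle term $S_i^*S_i-S_{n-i}^*S_{n-i}$ between the two pencils, I add the two scalar inequalities to obtain the key estimate
\[
|\langle X_ih,h\rangle|+|\langle X_{n-i}h,h\rangle|\le \tilde n_i\,\|D_Ph\|^2,\qquad h\in\mathcal H.\qquad (\dagger)
\]

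For the existence and uniqueness of the $A_i$'s, I will polarize $(\dagger)$. The diagonal bound $|\langle X_ih,h\rangle|\le\tilde n_i\|D_Ph\|^2$ combined with the standard polarization identity yields $|\langle X_ix,y\rangle|\le\tilde n_i(\|D_Px\|^2+\|D_Py\|^2)$, and the rescaling $(x,y)\mapsto(tx,t^{-1}y)$ followed by optimizing over $t>0$ sharpens this to $|\langle X_ix,y\rangle|\le 2\tilde n_i\,\|D_Px\|\,\|D_Py\|$. In particular, $D_Px=0$ forces $X_ix=0$ and similarly for $X_i^*$, so $\operatorname{Ran}X_i$ and $\operatorname{Ran}X_i^*$ lie in $\mathcal D_P=\overline{\operatorname{Ran}D_P}$. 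The sesquilinear form $(D_Px,D_Py)\mapsto\langle X_ix,y\rangle$ is then well defined and bounded on $\operatorname{Ran}D_P\times\operatorname{Ran}D_P$; extending by continuity to $\mathcal D_P\times\mathcal D_P$ and invoking the Riesz representation theorem produces a unique $A_i\in\mathcal B(\mathcal D_P)$ with $\langle A_iD_Px,D_Py\rangle=\langle X_ix,y\rangle$, equivalently $X_i=D_PA_iD_P$. Uniqueness is automatic because $D_P$ has dense range in $\mathcal D_P$.

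For the numerical radius bound, no polarization is required: $(\dagger)$ already gives, for every $z\in\mathbb T$ and $y\in\mathcal H$,
\[
|\langle X_iy,y\rangle+z\langle X_{n-i}y,y\rangle|\le|\langle X_iy,y\rangle|+|\langle X_{n-i}y,y\rangle|\le\tilde n_i\|D_Py\|^2,
\]
which translates, via $\langle A_jD_Py,D_Py\rangle=\langle X_jy,y\rangle$ for $j\in\{i,n-i\}$, to $|\langle(A_i+zA_{n-i})k,k\rangle|\le\tilde n_i\|k\|^2$ on the dense subspace $\operatorname{Ran}D_P$ of $\mathcal D_P$; continuity then delivers $\omega(A_i+zA_{n-i})\le\tilde n_i$. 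The main technical obstacle is the cancellation producing $(\dagger)$: without the combinatorial coincidence $\tilde n_i=\tilde n_{n-i}$ together with the sign flip of the indefinite term $S_i^*S_i-S_{n-i}^*S_{n-i}$ between $\Phi_i$ and $\Phi_{n-i}$, one would be left with a diagonal bound polluted by $\|S_ih\|^2-\|S_{n-i}h\|^2$, which no subsequent Schwarz or rescaling argument could absorb into $\|D_Ph\|^2$.
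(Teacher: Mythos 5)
Your proof is correct. Note that the paper itself gives no proof of this statement --- it is imported from \cite{sourav6} as Theorem \ref{existence-uniqueness} --- but your argument is a sound, self-contained derivation from exactly the ingredients the paper supplies: restricting Proposition \ref{lem:3} to $|\alpha|=1$, adding the $\Phi_i$ and $\Phi_{n-i}$ pencils so that $\tilde n_i=\tilde n_{n-i}$ and the sign flip of $S_i^*S_i-S_{n-i}^*S_{n-i}$ cancel the indefinite term (the same device the paper uses in the proof of Theorem \ref{cd1}), and then polarization plus Riesz representation for the existence and uniqueness of $A_i$ and the density argument for $\omega(A_i+A_{n-i}z)\leq\tilde n_i$. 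The only quibble is that your polarization constant in $|\langle X_ix,y\rangle|\le\tilde n_i(\|D_Px\|^2+\|D_Py\|^2)$ should be $2\tilde n_i$, which is immaterial since any bound of that form yields the required boundedness of the sesquilinear form.
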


\subsection{Results about $\mathbb E$-contractions}

Here we recall from literature some results about $\mathbb
E$-contractions which are parallel to the corresponding results
about $\Gamma_n$-contractions described in the preceding
subsection. We begin with an analogue of Lemma \ref{SP1} for
$\mathbb E$-contractions.

\begin{lem}[{\cite{T:B}, Lemma 3.3}]\label{lemTB}
    A commuting triple of bounded operators $(A, B, P)$ is a \textit{tetrablock contraction} if and only if
    \[
    \|f(A, B, P)\| \leq \|f\|_{\infty, \mathbb E} = \text{sup}\{|f(z_1,z_2,z_3)| : (z_1,z_2,z_3)\in \mathbb E\}
    \]
    for any holomorphic polynomially $f$ in three variables.
\end{lem}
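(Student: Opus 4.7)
The plan is to mirror the proof of Lemma \ref{SP1} and exploit the polynomial convexity of $\overline{\mathbb E}$ together with the Oka--Weil approximation theorem; the lemma is essentially the statement that, for a polynomially convex spectral set, testing on polynomials is enough.

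The forward direction is immediate from the definition of a spectral set: polynomials are rational functions with no poles, so the required inequality holds trivially whenever $\overline{\mathbb E}$ is a spectral set for $(A,B,P)$.

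For the converse, assume the polynomial von Neumann inequality holds. There are two things to verify: that the Taylor joint spectrum $\sigma_T(A,B,P)$ lies in $\overline{\mathbb E}$, and that the von Neumann inequality extends from polynomials to all rational functions with poles off $\overline{\mathbb E}$. For the spectral containment, I would pick $(\lambda_1,\lambda_2,\lambda_3)\in\sigma_T(A,B,P)$ and apply the polynomial spectral mapping theorem for the Taylor spectrum: for every polynomial $p$ in three variables, $p(\lambda_1,\lambda_2,\lambda_3)\in\sigma(p(A,B,P))$, and hence
\[
|p(\lambda_1,\lambda_2,\lambda_3)|\leq \|p(A,B,P)\|\leq \|p\|_{\infty,\overline{\mathbb E}}.
\]
This places $(\lambda_1,\lambda_2,\lambda_3)$ in the polynomially convex hull of $\overline{\mathbb E}$; since $\overline{\mathbb E}$ is polynomially convex (a fact quoted in the introduction of the paper), that hull equals $\overline{\mathbb E}$, and the spectral containment follows.

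With the spectral containment established, I would then handle an arbitrary rational function $f$ with poles off $\overline{\mathbb E}$. Such an $f$ is holomorphic on an open neighborhood $U$ of $\overline{\mathbb E}$, and I can arrange $U$ to be a neighborhood of the (compact) Taylor spectrum as well. By the Oka--Weil theorem, applied to the polynomially convex compactum $\overline{\mathbb E}$, there is a sequence of polynomials $p_k$ with $p_k\to f$ uniformly on a compact neighborhood of $\overline{\mathbb E}$ contained in $U$. The polynomial inequality gives $\|p_k(A,B,P)\|\leq \|p_k\|_{\infty,\overline{\mathbb E}}$, so passing to the limit via the Taylor holomorphic functional calculus (which is continuous with respect to uniform convergence on a neighborhood of the joint spectrum) yields $\|f(A,B,P)\|\leq \|f\|_{\infty,\overline{\mathbb E}}$, which is what is required.

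The main obstacle is the careful deployment of the Oka--Weil approximation: one must ensure uniform approximation not merely on $\overline{\mathbb E}$ but on a compact neighborhood large enough to contain the Taylor spectrum, so that the holomorphic functional calculus is continuous in the uniform topology there. Once polynomial convexity of $\overline{\mathbb E}$ is invoked (as already noted in the introduction), this step is routine, and the rest of the argument is bookkeeping; indeed, the proof will be the exact analogue of Lemma \ref{SP1}, with $\Gamma_n$ replaced by $\overline{\mathbb E}$.
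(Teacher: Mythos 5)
Your argument is correct and is essentially the standard proof of this lemma (which the paper itself only quotes from \cite{T:B} without proof): the forward direction is trivial from the definition of a spectral set, and the converse rests on the polynomial spectral mapping theorem plus polynomial convexity of $\overline{\mathbb E}$ to locate the Taylor spectrum, followed by Oka--Weil to pass from polynomials to rational functions with poles off $\overline{\mathbb E}$ --- precisely the route the paper indicates for the analogous Lemma \ref{SP1}. The only cosmetic remark is that one can avoid approximating on a compact neighborhood of $\overline{\mathbb E}$: uniform approximation on $\overline{\mathbb E}$ itself already makes $p_k(A,B,P)$ a Cauchy sequence via the polynomial von Neumann inequality, and the limit is identified with $f(A,B,P)$ through the rational functional calculus.
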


We now present analogues of Theorems \ref{gammauni} and
\ref{gammaiso} for $\mathbb E$-contractions.

\begin{thm}[{\cite{T:B}, Theorem 5.4}]\label{thm:tu}
    Let $\underline N = (N_1, N_2, N_3)$ be a commuting triple of
    bounded operators. Then the following are equivalent.

    \begin{enumerate}

        \item $\underline N$ is an $\mathbb E$-unitary,

        \item $N_3$ is a unitary and $\underline N$ is an $\mathbb
        E$-contraction,

        \item $N_3$ is a unitary, $N_2$ is a contraction and $N_1 = N_2^*
        N_3$.
    \end{enumerate}
\end{thm}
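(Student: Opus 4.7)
The plan is to establish the cycle $(1)\Rightarrow(2)\Rightarrow(3)\Rightarrow(1)$. For $(1)\Rightarrow(2)$, I would observe that when $\underline N$ is an $\mathbb E$-unitary, $N_3$ is normal and the third coordinate projection of $\sigma_T(\underline N)\subseteq b\mathbb E=\{(a,b,p)\in\overline{\mathbb E}:|p|=1\}$ is contained in $\mathbb T$, so $N_3$ is unitary. That $\overline{\mathbb E}$ is a spectral set then follows at once from the joint spectral theorem for commuting normal operators: for any rational $r$ with poles off $\overline{\mathbb E}$,
\[
\|r(\underline N)\|=\|r\|_{\infty,\sigma_T(\underline N)}\leq \|r\|_{\infty,b\mathbb E}\leq \|r\|_{\infty,\overline{\mathbb E}}.
\]

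For $(2)\Rightarrow(3)$, I would first note that $|x_2|\leq 1$ on $\overline{\mathbb E}$: setting $z=0$ in the defining inequality forces $1-wx_2\neq 0$ for all $w\in\overline{\mathbb D}$, which gives $|x_2|\leq 1$. Lemma \ref{lemTB} applied to the polynomial $p(x_1,x_2,x_3)=x_2$ therefore yields $\|N_2\|\leq 1$. The substantive point is the identity $N_1=N_2^*N_3$, and here the key tool will be the $\mathbb E$-analogue of Theorem \ref{existence-uniqueness}, namely the existence of the fundamental operator pair for a tetrablock contraction: for every $\mathbb E$-contraction $(A,B,P)$ there exist unique $F_1,F_2\in\mathcal B(\mathcal D_P)$ with $A-B^*P=D_PF_1D_P$ and $B-A^*P=D_PF_2D_P$. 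Since $N_3$ is unitary, $D_{N_3}=0$, and the first identity collapses to $N_1-N_2^*N_3=0$.

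For $(3)\Rightarrow(1)$, the commutativity of $\underline N$ forces $N_2$ to be normal: combining $N_1N_2=N_2N_1$ with $N_1=N_2^*N_3$ and $N_2N_3=N_3N_2$ yields $N_2^*N_2N_3=N_2N_2^*N_3$, and invertibility of the unitary $N_3$ gives $N_2^*N_2=N_2N_2^*$. Fuglede's theorem applied to the commuting normal pair $(N_2,N_3)$ then produces a commuting family $\{N_2,N_2^*,N_3,N_3^*\}$, hence a joint spectral measure $E$ for $(N_2,N_3)$ supported in $\overline{\mathbb D}\times\mathbb T$. Writing $N_1=N_2^*N_3=\int \bar z_2 z_3\,dE(z_2,z_3)$ shows that $N_1$ is normal and
\[
\sigma_T(\underline N)\subseteq\{(\bar z_2z_3,z_2,z_3):|z_2|\leq 1,\ |z_3|=1\}=b\mathbb E,
\]
which is the definition of an $\mathbb E$-unitary.

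The main obstacle will be the identity $N_1=N_2^*N_3$ in $(2)\Rightarrow(3)$: the other steps reduce to routine spectral theory and elementary manipulations with the defining inequality of $\overline{\mathbb E}$, whereas this identity rests on the nontrivial existence theorem for the fundamental operator pair of an $\mathbb E$-contraction, the tetrablock counterpart of Theorem \ref{existence-uniqueness}.
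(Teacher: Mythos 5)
The paper itself offers no proof of this statement: it is imported verbatim from \cite{T:B} (Theorem 5.4), so there is no in-paper argument to compare against. Judged on its own, your cycle $(1)\Rightarrow(2)\Rightarrow(3)\Rightarrow(1)$ is correct and is essentially the standard route, including the one the original source takes for the key step: deriving $N_1=N_2^*N_3$ from the fundamental equation $A-B^*P=D_PF_1D_P$ of Theorem \ref{exist-tetra} together with $D_{N_3}=0$. The steps $(1)\Rightarrow(2)$ (spectral mapping for the Taylor spectrum to get $\sigma(N_3)\subseteq\mathbb T$, then the joint spectral measure to get von Neumann over $\sigma_T(\underline N)\subseteq b\mathbb E\subseteq\overline{\mathbb E}$), the bound $\|N_2\|\le 1$ via $z=0$ in the defining condition of $\mathbb E$, and the Fuglede argument showing $N_2$ is normal are all sound.

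One step is asserted rather than proved: in $(3)\Rightarrow(1)$ you use the identity
\[
\{(\bar z_2z_3,z_2,z_3):|z_2|\le 1,\ |z_3|=1\}=b\mathbb E .
\]
The description of $b\mathbb E$ quoted in this paper, namely $b\mathbb E=\{(a,b,p)\in\overline{\mathbb E}:|p|=1\}$, does not by itself yield this; what you actually need (and all you need) is the inclusion $\subseteq$, i.e.\ that every triple $(\bar\beta p,\beta,p)$ with $|\beta|\le 1$, $|p|=1$ lies in $\overline{\mathbb E}$. This is Theorem 7.1 of \cite{A:W:Y}, or a one-line computation: for $z,w\in\mathbb D$,
\[
1-z\bar\beta p-w\beta+zwp=(1-w\beta)-zp(\bar\beta-w),
\]
and $|1-w\beta|^2-|\bar\beta-w|^2=(1-|\beta|^2)(1-|w|^2)\ge 0$ forces this to be nonzero, so the point survives in the closure. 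Adding that citation or computation closes the only gap; everything else in your write-up is complete.
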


\begin{thm}[{\cite{T:B}, Theorem 5.7}] \label{thm:ti}

    Let $\underline V = (V_1, V_2, V_3)$ be a commuting triple of
    bounded operators. Then the following are equivalent.

    \begin{enumerate}

        \item $\underline V$ is an $\mathbb E$-isometry.

        \item $V_3$ is an isometry and $\underline V$ is an $\mathbb
        E$-contraction.

        \item $V_3$ is an isometry, $V_2$ is a contraction and $V_1=V_2^*
        V_3$.

    \end{enumerate}
\end{thm}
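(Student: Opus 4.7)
The plan is to prove the three conditions equivalent via the cycle $(1) \Rightarrow (2) \Rightarrow (3) \Rightarrow (1)$, relying on Theorem \ref{thm:tu}, Lemma \ref{lemTB}, and the fundamental operator pair for $\mathbb{E}$-contractions. The implication $(1) \Rightarrow (2)$ is essentially by definition: if $\underline V = (Q_1|_{\mathcal H}, Q_2|_{\mathcal H}, U|_{\mathcal H})$ for some $\mathbb{E}$-unitary $(Q_1, Q_2, U)$ on $\mathcal K \supseteq \mathcal H$, then Theorem \ref{thm:tu} yields that $U$ is unitary, so $V_3 = U|_{\mathcal H}$ is an isometry. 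For every holomorphic polynomial $f$ in three variables one has $f(\underline V) = f(Q_1, Q_2, U)|_{\mathcal H}$, hence
\[
\|f(\underline V)\| \leq \|f(Q_1, Q_2, U)\| \leq \|f\|_{\infty, \overline{\mathbb E}},
\]
and Lemma \ref{lemTB} identifies $\underline V$ as an $\mathbb{E}$-contraction.

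For $(2) \Rightarrow (3)$, testing the $\mathbb{E}$-contraction inequality on the coordinate polynomial $f(x_1,x_2,x_3)=x_2$ gives $\|V_2\| \leq \sup_{\overline{\mathbb E}}|x_2| = 1$, where the supremum equals $1$ thanks to the slice $(0,b,0)\in\overline{\mathbb E}$ for $|b|\leq 1$. For the identity $V_1 = V_2^* V_3$ I would invoke the existence-uniqueness result for the fundamental operator pair of an $\mathbb{E}$-contraction, the $\mathbb{E}$-analogue of Theorem \ref{existence-uniqueness} on the $\ft$-tuple (see \cite{T:B}): for any $\mathbb{E}$-contraction $(A,B,P)$ there exist unique $F_1,F_2\in\mathcal B(\mathcal D_P)$ with
\[
A - B^* P = D_P F_1 D_P, \qquad B - A^* P = D_P F_2 D_P.
\]
Applied to $(V_1,V_2,V_3)$, the isometry of $V_3$ yields $D_{V_3}=0$, which forces $V_1 = V_2^* V_3$.

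The crux is $(3) \Rightarrow (1)$. I would first form the minimal unitary extension $U$ of the isometry $V_3$, acting on some $\mathcal K \supseteq \mathcal H$ with $U|_{\mathcal H} = V_3$. The key step is to extend $V_2$ to a normal contraction $Q_2\in\mathcal B(\mathcal K)$ commuting with $U$, with $\mathcal H$ left invariant by $Q_2$ and $Q_2|_{\mathcal H}=V_2$. Granted such $Q_2$, set $Q_1:=Q_2^* U$. Then $(Q_1,Q_2,U)$ is a commuting triple of normal operators with $U$ unitary, $\|Q_2\|\leq 1$, and $Q_1=Q_2^* U$, so by Theorem \ref{thm:tu}(3) it is an $\mathbb{E}$-unitary. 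Using the relation $V_1=V_2^* V_3$ from (3), restricting $(Q_1,Q_2,U)$ to $\mathcal H$ recovers $(V_1,V_2,V_3)$ and exhibits it as an $\mathbb{E}$-isometry.

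The main obstacle is the construction of $Q_2$: producing a \emph{normal} contraction on $\mathcal K$ that commutes with $U$, extends $V_2$, and keeps $\mathcal H$ invariant. Since $V_2$ itself need not be normal, a naive dilation does not suffice. My approach would be to exploit the Wold decomposition $V_3 = V_3^{(u)} \oplus S$ of the isometry (unitary summand plus pure shift), note that $V_2$ respects this decomposition because it commutes with $V_3$, and then handle the two summands separately: on the unitary part no extension is needed, while on the shift part one uses the functional-model description of $S$ and its minimal unitary extension (a bilateral shift) to build $Q_2$ via a commutant-lifting-type argument, with normality of the resulting $Q_2$ forced by the relation $V_1 = V_2^* V_3$ coming from (3).
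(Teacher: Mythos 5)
This theorem is imported by the paper from \cite{T:B} (Bhattacharyya, Theorem 5.7) and is stated without proof, so there is no internal argument to compare yours against; I can only assess your proposal on its own terms. Your cycle $(1)\Rightarrow(2)\Rightarrow(3)\Rightarrow(1)$ is the right shape, and the first two implications are essentially sound: $(1)\Rightarrow(2)$ is routine, and deriving $V_1=V_2^*V_3$ in $(2)\Rightarrow(3)$ from the fundamental equations $A-B^*P=D_PF_1D_P$ together with $D_{V_3}=0$ is a clean and legitimate route (one small omission: the slice $(0,b,0)$ only shows $\sup_{\overline{\mathbb E}}|x_2|\geq 1$; you still need the standard fact that $|x_2|\leq 1$ on $\overline{\mathbb E}$ to bound $\|V_2\|$).

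The genuine gaps are in $(3)\Rightarrow(1)$. First, your claim that ``$V_2$ respects the Wold decomposition because it commutes with $V_3$'' is false as stated: commuting with an isometry $V_3=W\oplus S$ only makes the unitary summand $\bigcap_nV_3^n\mathcal H$ \emph{invariant} under $V_2$, not reducing. To kill the off-diagonal blocks you must use the full hypotheses --- commutativity of the triple together with $V_1=V_2^*V_3$ --- exactly as is done in Case I of Theorem \ref{cd11} of this paper, where the relation $P_2^*P_2A_{21}=A_{21}$ forces $A_{21}=0$ because $P_2$ has no isometric part. Second, even granting a normal contractive extension $Q_2$ of $V_2$ commuting with $U$ and leaving $\mathcal H$ invariant, it is not automatic that $\mathcal H$ is invariant under $Q_1:=Q_2^*U$ with $Q_1|_{\mathcal H}=V_1$: for $h\in\mathcal H$ one gets $Q_1h=Q_2^*V_3h$, and $Q_2^*$ does not leave $\mathcal H$ invariant when $Q_2$ does. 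In the $H^2(\mathcal E)\subset L^2(\mathcal E)$ model with $V_2=M_\varphi$, $\varphi\in H^\infty(\mathcal B(\mathcal E))$, one has $Q_1=M_{\bar\varphi z}$, and showing $\bar\varphi z$ is again analytic requires invoking the commutativity $V_1V_3=V_3V_1$ (so that $V_1$ is an analytic Toeplitz operator) and the uniqueness of Toeplitz symbols. Both gaps are fillable by standard arguments, but as written the crux of the proof is not complete; incidentally, normality of $Q_2$ is not something you need to ``force'' --- it follows for free from $Q_2U=UQ_2$, $Q_1=Q_2^*U$ and $Q_1Q_2=Q_2Q_1$, since these give $Q_2^*Q_2U=Q_2Q_2^*U$ with $U$ invertible.
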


The next theorem will be used in the proof of almost every result
that we obtain in this paper about an $\mathbb E$-contraction.

\begin{thm}[\cite{T:B}, Theorem 3.5] \label{exist-tetra}
To every $\mathbb
E$-contraction $(A, B, P)$ there were two unique operators $F_1$
and $F_2$ on $\mathcal{\mathcal{D}_P} =\overline{\text{Ran}}(I -
P^*P)$ that satisfied the fundamental equations, i.e,
\[
A-B^*P = D_PF_1D_P\,, \qquad B-A^*P = D_PF_2D_P.
\]
\end{thm}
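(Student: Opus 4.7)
The plan is to parallel the strategy of Theorem \ref{existence-uniqueness} for $\Gamma_n$-contractions: handle uniqueness by a range-density argument, then secure existence by combining a two-sided operator inequality bounding $A - B^*P$ (respectively $B - A^*P$) against $D_P$ with Douglas' factorization lemma. The two equations are structurally identical after swapping the roles of $A$ and $B$, so it suffices to produce a unique $F_1 \in \mathcal{B}(\mathcal{D}_P)$ satisfying $A - B^*P = D_P F_1 D_P$.

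For uniqueness, if $F_1, F_1' \in \mathcal{B}(\mathcal{D}_P)$ both satisfy the first fundamental equation, then $D_P(F_1 - F_1')D_P = 0$ on $\mathcal{H}$, which gives $\langle (F_1 - F_1') D_P x, D_P y \rangle = 0$ for every $x, y \in \mathcal{H}$. Since $\{D_P x : x \in \mathcal{H}\}$ is dense in $\mathcal{D}_P$ and $F_1 - F_1'$ is bounded on $\mathcal{D}_P$, this forces $F_1 = F_1'$.

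For existence, the goal is to establish, for some constant $K > 0$, the two operator inequalities $(A - B^*P)^*(A - B^*P) \leq K\, D_P^2$ and $(A - B^*P)(A - B^*P)^* \leq K\, D_P^2$. Granting these, Douglas' lemma produces bounded operators $W_1, W_2 : \mathcal{D}_P \to \mathcal{H}$ with $A - B^*P = W_1 D_P$ and $(A - B^*P)^* = W_2 D_P$, the latter rewriting as $A - B^*P = D_P W_2^*$. Matching the two factorizations via $W_1 D_P = D_P W_2^*$ shows that $W_1$ carries $\text{Ran}(D_P)$ into $\text{Ran}(D_P) \subseteq \mathcal{D}_P$, so continuity lifts $W_1$ to a bounded map from $\mathcal{D}_P$ to $\mathcal{D}_P$; a final Douglas factorization inside $\mathcal{D}_P$ extracts $F_1 \in \mathcal{B}(\mathcal{D}_P)$ with $W_1 = D_P F_1$, and hence $A - B^*P = D_P F_1 D_P$.

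The principal obstacle is producing the scalar $K$ and the two operator inequalities from $\mathbb{E}$-contractivity. By Lemma \ref{lemTB}, it suffices to bound $\|f(A,B,P)\|$ by $\|f\|_{\infty, \overline{\mathbb{E}}}$ for holomorphic polynomials $f$. Guided by the defining condition $1 - zx_1 - wx_2 + zwx_3 \neq 0$ for $z, w \in \overline{\mathbb{D}}$, the natural test family consists of polynomial combinations of $x_1, x_2, x_3$ whose scalar modulus on $\overline{\mathbb{E}}$ admits a clean bound involving $1 - |p|^2$, analogous to the pencils $\Phi_i$ of Proposition \ref{lem:3}. An averaging-over-the-unit-circle argument should then convert the resulting scalar inequality into a positive operator pencil whose off-diagonal entries encode $A - B^*P$ and whose diagonal encodes a positive multiple of $D_P^2 = I - P^*P$; Lemma \ref{VP1} converts this block positivity into the two desired bounds. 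I expect this step to be the longest and most delicate part, because the asymmetry between $x_1$ and $x_2$ in the defining inequality of $\mathbb{E}$ prevents a direct transcription of the $\Gamma_n$ proof; once it is in place, the Douglas factorizations and the uniqueness step are essentially formal.
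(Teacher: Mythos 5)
The paper itself offers no proof of this statement---it is imported verbatim from \cite{T:B}, Theorem 3.5---so your attempt can only be measured against the argument in that reference. Your uniqueness step (density of $\mathrm{Ran}\,D_P$ in $\mathcal{D}_P$) is correct and standard, and you have correctly identified the hard analytic input as a circle-averaged operator-pencil positivity. The genuine gap is in the final stage of your existence argument: the two one-sided dominations $(A-B^*P)^*(A-B^*P)\le K D_P^2$ and $(A-B^*P)(A-B^*P)^*\le K D_P^2$ are \emph{strictly weaker} than the conclusion $A-B^*P=D_PF_1D_P$, so no sequence of Douglas factorizations starting from them can close the proof. Concretely, if $T$ happened to equal $D_P$ with $D_P=\mathrm{diag}(1,\tfrac12,\tfrac13,\dots)$ on $\ell^2$ (injective, not bounded below), both inequalities hold with $K=1$, your $W_1=W_2^*=I$ maps $\mathrm{Ran}\,D_P$ into itself, and yet $I=D_PF_1$ would force $F_1e_n=ne_n$, which is unbounded. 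The hypothesis your ``final Douglas factorization inside $\mathcal{D}_P$'' needs is $W_1W_1^*\le K'D_P^2$; what $W_1D_P=D_PW_2^*$ actually yields is only $W_1D_P^2W_1^*\le KD_P^2$, which does not suffice. In terms of bilinear estimates, your inequalities give $|\langle Tx,y\rangle|\le\sqrt{K}\,\min\left(\|D_Px\|\,\|y\|,\;\|x\|\,\|D_Py\|\right)$, whereas the factorization $T=D_PF_1D_P$ is equivalent to $|\langle Tx,y\rangle|\le C\|D_Px\|\,\|D_Py\|$.

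The repair is to not discard the information in the block positivity you already plan to prove. If the $2\times 2$ operator matrix with diagonal entries $cD_P^2$ and off-diagonal entries $T$, $T^*$ is positive, Lemma \ref{VP1} gives \emph{directly} $|\langle Tx,y\rangle|\le c\|D_Px\|\,\|D_Py\|$, and then the sesquilinear form $(D_Px,D_Py)\mapsto\langle Tx,y\rangle$ is well defined and bounded on $\mathrm{Ran}\,D_P\times\mathrm{Ran}\,D_P$, extends to $\mathcal{D}_P$, and is implemented by the required $F_1$; Douglas' lemma is never needed. This is in substance the route of \cite{T:B}: $\mathbb{E}$-contractivity is converted (via the fractional map $\Psi(z,x)=(x_3z-x_1)/(x_2z-1)$, i.e.\ von Neumann's inequality for suitable rational functions) into the pencil estimates
\begin{equation*}
I-P^*P+B^*B-A^*A\ \ge\ 2\,\mathrm{Re}\bigl[z(A-B^*P)\bigr],\qquad
I-P^*P+A^*A-B^*B\ \ge\ 2\,\mathrm{Re}\bigl[w(B-A^*P)\bigr]
\end{equation*}
for all $z,w\in\mathbb{T}$; adding them gives $\mathrm{Re}[z(A-B^*P)]+\mathrm{Re}[w(B-A^*P)]\le D_P^2$, hence the quadratic bound $|\langle(A-B^*P)h,h\rangle|\le\|D_Ph\|^2$, and polarization together with the scaling $h\mapsto th$, $k\mapsto k/t$ upgrades this to $|\langle(A-B^*P)h,k\rangle|\le 2\|D_Ph\|\,\|D_Pk\|$, which is the estimate that actually produces $F_1$. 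So your plan is salvageable, but the functional-analytic bookkeeping after the pencil positivity must go through the quadratic form rather than through Douglas' lemma.
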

The operators $F_1,F_2$ are called the \textit{fundamental
operators} of $(A, B, P)$. Like the $\ft$-tuples of a
$\Gamma_n$-contraction, the fundamental operators play central
role in the theory of $\mathbb{E}$-contractions.

\section{Structure theorems for
$\Gamma_n$-contractions}\label{gamma-n}

We begin this section with the statement of the canonical decomposition of a $\Gamma_n$-contraction due to the second named author of this paper. It will help a reader to understand the further development of the structure theory (of a $\Gamma_n$-contraction) done in this article.

\begin{thm}[\cite{sourav3}, Theorem 1.1]\label{mainthm}
Let $(S_1,\dots,S_{n-1},P)$ be a $\Gamma_n$-contraction on a
Hilbert space $\mathcal H$. Let $\mathcal H_1$ be the maximal
subspace of $\mathcal H$ which reduces $P$ and on which $P$ is
unitary. Let $\mathcal H_2=\mathcal H\ominus \mathcal H_1$. Then
\begin{enumerate}
\item $\mathcal H_1,\mathcal H_2$ reduce $S_1,\dots, S_{n-1}$,
\item $(S_1|_{\mathcal H_1},\dots,S_{n-1}|_{\mathcal
H_1},P|_{\mathcal H_1})$ is a $\Gamma_n$-unitary, \item
$(S_1|_{\mathcal H_2},\dots,S_{n-1}|_{\mathcal H_2},P|_{\mathcal
H_2})$ is a completely non-unitary $\Gamma_n$-contraction.
\end{enumerate}
The subspaces $\mathcal H_1$ or $\mathcal H_2$ can be equal to the
trivial subspace $\{0\}$.
\end{thm}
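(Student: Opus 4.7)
My plan is to prove (1) first, after which (2) and (3) are essentially free: the restriction of a $\Gamma_n$-contraction to a reducing subspace is again a $\Gamma_n$-contraction, so once $\mathcal H_1$ reduces every $S_i$, the restriction on $\mathcal H_1$ is a $\Gamma_n$-contraction with unitary last component, hence a $\Gamma_n$-unitary by Theorem~\ref{thm:gamma-ui}; on $\mathcal H_2$ the last component is completely non-unitary by construction (the classical decomposition of Theorem~\ref{thm:candecomp}). The remaining work is therefore concentrated on (1).

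For (1), I would invoke the fundamental operator tuples (Theorem~\ref{existence-uniqueness}) for both $(S_1,\dots,S_{n-1},P)$ and its adjoint, producing $A_i$ on $\mathcal D_P$ and $B_i$ on $\mathcal D_{P^*}$ with
\[
S_i - S_{n-i}^* P = D_P A_i D_P \quad\text{and}\quad S_i^* - S_{n-i} P^* = D_{P^*} B_i D_{P^*}.
\]
Since $P|_{\mathcal H_1}$ is unitary, both $D_P$ and $D_{P^*}$ vanish on $\mathcal H_1$, so these identities collapse on $\mathcal H_1$ to
\[
S_i h = S_{n-i}^* Ph \quad\text{and}\quad S_i^* h = S_{n-i} P^* h \qquad (h\in\mathcal H_1).
\]

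The key intermediate goal is then to establish, for every $h\in\mathcal H_1$, the three facts $D_P S_i h=0$, $D_{P^*} S_i h=0$, and $P^* S_i h = S_i P^* h$. Using $P S_i = S_i P$, its adjoint consequence $P^* S_{n-i}^* = S_{n-i}^* P^*$, and the collapsed relation applied twice, I compute
\[
P^* P S_i h = P^* S_i P h = P^* S_{n-i}^* P^2 h = S_{n-i}^* P^* P^2 h = S_{n-i}^* P h = S_i h,
\]
where the fourth equality uses $P^*P(Ph)=Ph$ because $Ph\in\mathcal H_1$. This is $D_P S_i h=0$; the same chain, read as $P^* S_i(Pg)=S_i g$, gives (since $P$ is unitary on $\mathcal H_1$) the identity $P^* S_i = S_i P^*$ on $\mathcal H_1$, and a symmetric computation using the second collapsed identity yields $D_{P^*} S_i h = 0$. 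Together, these three facts force the smallest subspace containing $\{S_i h : h\in\mathcal H_1\}$ and invariant under both $P$ and $P^*$ to be a reducing subspace on which $P$ is unitary; by the maximality defining $\mathcal H_1$, it must lie inside $\mathcal H_1$, so $S_i\mathcal H_1\subseteq\mathcal H_1$. The same argument applied to the adjoint tuple gives $S_i^* \mathcal H_1\subseteq\mathcal H_1$, completing (1).

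The main obstacle is the asymmetry between $P$ and $P^*$: $S_i$ is only assumed to commute with $P$, so a direct attack via standard commuting-operator arguments is unavailable. The heart of the plan is to use the vanishing of both defect operators on $\mathcal H_1$ to collapse the fundamental equations into two identities whose combination with $P$-commutativity manufactures exactly the partial $P^*$-commutativity ($P^* S_i = S_i P^*$ on $\mathcal H_1$) needed to carry $\mathcal H_1$-invariance of $S_i$ through the full $P,P^*$-orbit.
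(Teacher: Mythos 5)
Your proposal is correct, but it is worth noting that the paper never proves this theorem itself: it is imported verbatim from \cite{sourav3}, and the right comparison is with that source and with the paper's proofs of the sibling decompositions (Theorem \ref{cd1} and the Kubrusly-type theorem in Section \ref{gamma-n}). Those arguments all write each $S_i$ as a $2\times 2$ block matrix with respect to $\mathcal H_1\oplus\mathcal H_2$ and kill the off-diagonal blocks, either via positivity of the pencils $\Phi_i(\alpha S_1,\dots,\alpha^n P)$ combined with Lemma \ref{VP1} and Corollary \ref{cor3.1}, or via the blockwise fundamental equations plus the observation that $P|_{\mathcal H_2}$ would otherwise act isometrically on the range of an off-diagonal block. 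Your route shares the entry point (the fundamental equations of Theorem \ref{existence-uniqueness} degenerate on $\mathcal H_1$ because $D_P=D_{P^*}=0$ there) but then diverges: you manufacture the partial commutation $P^*S_i=S_iP^*$ on $\mathcal H_1$ and conclude by maximality of $\mathcal H_1$, with no block matrices, no case analysis, and no appeal to Lemma \ref{VP1}; the price is reliance on the standard (but worth citing) fact that $\mathcal H_1=\{x:\|P^nx\|=\|x\|=\|P^{*n}x\|\ \text{for all } n\}$ contains every reducing subspace on which $P$ is unitary. Two points to tighten, neither a real gap: (i) the literally symmetric computation with the second collapsed identity gives $D_{P^*}S_i^*h=0$ rather than $D_{P^*}S_ih=0$; the latter follows instead from $PP^*S_ih=PS_iP^*h=S_iPP^*h=S_ih$, using the commutation you have just derived; (ii) in the last step you should note that $S_i\mathcal H_1$ is already invariant under both $P$ and $P^*$ (since $PS_ih=S_iPh$ and $P^*S_ih=S_iP^*h$ with $Ph,\,P^*h\in\mathcal H_1$), so the generated reducing subspace is simply $\overline{S_i\mathcal H_1}$ and the vanishing of both defect operators on it is immediate by continuity.
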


As we have mentioned before that in \cite{N:L}, Levan showed that a c.n.u contraction could further
be decomposed orthogonally into a c.n.i contraction and a
unilateral shift (i.e., a pure contraction). We state the result
below.

\begin{thm}[\cite{N:L}, Theorem 1]\label{thmlv1}
    With respect to a c.n.u contraction $T$ and
    its adjoint $T^*$ on $\mathcal{H}$, $\mathcal{H}$ admits the unique orthogonal decomposition
    $$\mathcal{H} = \mathcal{H}_1\oplus\mathcal{H}_2,$$
    such that $T|_{\mathcal{H}_1}$ is a c.n.u isometry
    and $T|_{\mathcal{H}_2}$ is a c.n.i contraction.
\end{thm}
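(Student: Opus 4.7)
The plan is to introduce $\mathcal{H}_1$ as the largest subspace of $\mathcal{H}$ that reduces $T$ and on which $T$ is an isometry, and then to put $\mathcal{H}_2 := \mathcal{H}\ominus \mathcal{H}_1$. Concretely, I would define
\[
\mathcal{H}_1 := \{\, x \in \mathcal{H} : \|T^n T^{*m} x\| = \|T^{*m} x\| \text{ for all } n, m \geq 0 \,\}
\]
and check that this closed subspace is invariant under both $T$ and $T^*$. The key algebraic input is that whenever $x \in \mathcal{H}_1$, setting $(n,m)=(1,0)$ gives $\|Tx\|=\|x\|$, which together with $I-T^*T\geq 0$ forces $T^*T x = x$; this identity collapses every string $T^{*m}Tx$ with $m\geq 1$ down to $T^{*(m-1)}x$, so the defining condition for $x$ immediately implies the same condition for $Tx$ and for $T^*x$. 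Maximality is built into the definition: if $\mathcal{K}$ reduces $T$ and $T|_{\mathcal{K}}$ is isometric, then every $x\in\mathcal{K}$ satisfies $T^{*m}x\in\mathcal{K}$ and $\|T^n T^{*m}x\|=\|T^{*m}x\|$, so $\mathcal{K}\subseteq\mathcal{H}_1$.

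Once $\mathcal{H}_1$ is in place, $\mathcal{H}_2$ automatically reduces $T$, and $T|_{\mathcal{H}_1}$ is an isometry by construction. I would next verify the two qualitative statements. For c.n.u-ness of $T|_{\mathcal{H}_1}$: any reducing subspace $\mathcal{K}\subseteq\mathcal{H}_1$ on which $T|_{\mathcal{H}_1}$ is unitary is in particular a reducing subspace of $T$ on which $T$ is unitary, which must be $\{0\}$ by the standing assumption that $T$ is c.n.u. For c.n.i-ness of $T|_{\mathcal{H}_2}$: a reducing subspace $\mathcal{K}$ of $T|_{\mathcal{H}_2}$ on which $T$ acts isometrically is again a reducing subspace of $T$ on which $T$ is isometric, so by maximality $\mathcal{K}\subseteq\mathcal{H}_1\cap\mathcal{H}_2=\{0\}$.

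For uniqueness, suppose $\mathcal{H}=\mathcal{K}_1\oplus\mathcal{K}_2$ with $T|_{\mathcal{K}_1}$ a c.n.u isometry and $T|_{\mathcal{K}_2}$ a c.n.i contraction. The inclusion $\mathcal{K}_1\subseteq\mathcal{H}_1$ is forced by the maximality of $\mathcal{H}_1$. Consider $\mathcal{L}:=\mathcal{H}_1\cap\mathcal{K}_2$; since $\mathcal{H}_1$ and $\mathcal{K}_2$ both reduce $T$, so does $\mathcal{L}$, and $T|_{\mathcal{L}}$ inherits the isometric action from $\mathcal{H}_1$. But $T|_{\mathcal{K}_2}$ is c.n.i, so $\mathcal{L}=\{0\}$, giving $\mathcal{H}_1\subseteq\mathcal{K}_1$ and hence equality.

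The step I expect to be the main obstacle is the verification that the candidate $\mathcal{H}_1$ above is genuinely $T$-invariant (rather than just $T^*$-invariant), since the defining inequalities involve mixed words in $T$ and $T^*$; the trick alluded to above, namely using $\|Tx\|=\|x\|\Rightarrow T^*Tx=x$ to telescope the strings, is what makes this bookkeeping go through cleanly. Everything else is essentially a maximality-based accounting argument.
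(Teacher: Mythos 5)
Your argument is correct, but note that the paper does not prove this statement at all: Theorem \ref{thmlv1} is quoted verbatim from Levan \cite{N:L}, so there is no internal proof to compare against. What you supply is a self-contained construction of the maximal \emph{reducing} subspace on which $T$ acts isometrically, namely $\mathcal{H}_1=\{x\in\mathcal H:\ T^{*m}x\in\mathcal{H}_1'\ \text{for all }m\ge 0\}$, where $\mathcal{H}_1'=\{h:\|T^nh\|=\|h\|\ \text{for all }n\ge 1\}$ is the maximal \emph{invariant} such subspace (the space the paper actually works with later, in Theorem \ref{NL1} and Corollary \ref{cor1}); your telescoping identity $T^{*m}Tx=T^{*(m-1)}x$, which follows from $\|Tx\|=\|x\|$ and $I-T^*T\ge 0$, is exactly what makes this $T^*$-invariant core of $\mathcal{H}_1'$ also $T$-invariant, hence reducing, and the maximality and uniqueness bookkeeping then goes through as you describe. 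Two small points are worth making explicit. First, the whole argument hinges on reading ``c.n.i.'' as ``no nonzero \emph{reducing} subspace on which the operator is isometric''; this is the convention the paper uses (see the proof of Theorem \ref{cd1}, Case I) and the only one under which Levan's statement can hold --- with the invariant-subspace convention the operator $P$ of Example \ref{example1} would admit no such decomposition, since there $\mathcal H_1=\{0\}$ while $P$ is isometric on a nonzero invariant subspace. Second, in the uniqueness step the passage from $\mathcal{H}_1\cap\mathcal{K}_2=\{0\}$ to $\mathcal{H}_1\subseteq\mathcal{K}_1$ deserves one line: since $\mathcal{K}_1\subseteq\mathcal{H}_1$, one has $\mathcal{H}_1\ominus\mathcal{K}_1=\mathcal{H}_1\cap\mathcal{K}_1^{\perp}=\mathcal{H}_1\cap\mathcal{K}_2=\{0\}$. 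With these made explicit the proof is complete.
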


We present an analogue of the Theorem \ref{thmlv1} for
particular classes of completely non-unitary
$\Gamma_n$-contractions. This is one of the main results of this
article.

\begin{thm}\label{cd1}
    Let $(S_1, \dots, S_{n-1}, P)$ be a c.n.u $\Gamma_n$-contraction
    on a Hilbert space $\mathcal{H}$. Let $\mathcal{H}_1, \mathcal H_2$ be as in
    Theorem \ref{thmlv1}. If either $P^*$ commutes with each $S_i$ or $\mathcal{H}_1$
    is the maximal invariant subspace for $P$ on which $P$ is isometry, then
    \begin{enumerate}
        \item $\mathcal{H}_1$, $\mathcal{H}_2$ reduce $S_1, \dots, S_{n-1}$;
        \item $\left(S_1|_{\mathcal{H}_1}, \dots, S_{n-1}|_{\mathcal{H}_1},P|_{\mathcal{H}_1} \right)$
        is a c.n.u $\Gamma_n$-isometry;
        \item $\left(S_1|_{\mathcal{H}_2}, \dots, S_{n-1}|_{\mathcal{H}_2},P|_{\mathcal{H}_2} \right)$
        is a c.n.i $\Gamma_n$-contraction.
    \end{enumerate}
    Anyone of $\mathcal{H}_1,\mathcal{H}_2$ can be equal to the trivial subspace $\{0\}$.
\end{thm}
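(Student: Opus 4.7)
The plan is to combine the existence and uniqueness of the fundamental operator tuple $(A_1,\dots,A_{n-1})\in\mathcal B(\mathcal D_P)^{n-1}$ (Theorem~\ref{existence-uniqueness}) with the fact that $P|_{\mathcal H_1}$ is an isometry in both settings. The identities
\[
S_i - S_{n-i}^{*}P \;=\; D_P A_i D_P, \qquad S_i^{*} - P^{*}S_{n-i} \;=\; D_P A_i^{*} D_P,
\]
together with $D_P|_{\mathcal H_1}=0$ (since $P^{*}P$ acts as the identity on $\mathcal H_1$), immediately yield, for every $x\in\mathcal H_1$ and every $i=1,\dots,n-1$, the two key relations
\begin{equation}\label{eq:plan}
S_i x \;=\; S_{n-i}^{*}\,Px, \qquad S_i^{*}x \;=\; P^{*}\,S_{n-i}\,x.
\end{equation}
These relations will deliver conclusions (2) and (3) as soon as statement (1) is established.

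The crux of the argument is therefore to show that $\mathcal H_1$ (and hence $\mathcal H_2$) is reducing for each $S_i$, and this is where hypotheses (a) and (b) enter. Under (a), the commutations $S_iP=PS_i$ and $S_iP^{*}=P^{*}S_i$ force $S_i$ to commute with $P^{*}P=I-D_P^{\,2}$, and hence by the continuous functional calculus on positive operators with $D_P$ itself. Since $\mathcal H_1$ reduces $P$, one has $P^{\pm k}\mathcal H_1\subseteq\mathcal H_1$, and for every $x\in\mathcal H_1$ and $k\geq 0$ we obtain $D_PP^{k}S_ix=S_iD_PP^{k}x=0$ and $D_PP^{*k}S_ix=S_iD_PP^{*k}x=0$. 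This places $S_ix$ in a reducing subspace of $P$ on which $P$ acts as a pure isometry; by the maximality built into Levan's construction this subspace must be contained in $\mathcal H_1$, and an identical argument for $S_i^{*}$ yields the reducing conclusion. Under (b), $\mathcal H_1$ is by hypothesis the maximal invariant isometric subspace $\{x:D_PP^{k}x=0\ \forall k\geq 0\}$ of $P$ and is reducing for $P$. For $x\in\mathcal H_1$, iterating \eqref{eq:plan} on $P^{k}x\in\mathcal H_1$ (using $[S_i,P]=0$) gives $S_iP^{k}x=S_{n-i}^{*}P^{k+1}x$; combining this with the adjoint relation in~\eqref{eq:plan} applied at $P^{k+1}x\in\mathcal H_1$, and with the isometric action of $P$ on $\mathcal H_1$, a short norm calculation yields $\|P^{k}S_ix\|=\|S_ix\|$ for every $k\geq 0$, so $S_ix$ lies in the maximal invariant isometric subspace, namely $\mathcal H_1$. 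The symmetry $i\leftrightarrow n-i$ then handles $S_i^{*}$ and $\mathcal H_2$.

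With (1) in hand, the rest is immediate. The restriction $(S_1|_{\mathcal H_1},\dots,S_{n-1}|_{\mathcal H_1},P|_{\mathcal H_1})$ is a commuting $\Gamma_n$-contraction with $P|_{\mathcal H_1}$ an isometry, and \eqref{eq:plan} rewrites as $S_i|_{\mathcal H_1}=(S_{n-i}|_{\mathcal H_1})^{*}P|_{\mathcal H_1}$; Theorem~\ref{gammaiso} identifies it as a $\Gamma_n$-isometry, and the fact that $P|_{\mathcal H_1}$ is a pure (c.n.u) isometry makes it a c.n.u $\Gamma_n$-isometry, proving~(2). Statement~(3) is similar: $(S_1|_{\mathcal H_2},\dots,S_{n-1}|_{\mathcal H_2},P|_{\mathcal H_2})$ is a $\Gamma_n$-contraction whose last component is c.n.i by Theorem~\ref{thmlv1}, which by Definition~\ref{def3} is exactly a c.n.i $\Gamma_n$-contraction. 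The principal obstacle throughout is the reducing statement~(1): neither $[S_i,P]=0$ alone nor the isometric action of $P$ on $\mathcal H_1$ is enough (as Example~\ref{example1} will illustrate), and one of the two hypotheses supplies precisely the missing input—(a) lets $S_i$ pass through the positive square root $D_P$, while (b) collapses Levan's reducing $\mathcal H_1$ onto the (in general larger) invariant isometric subspace, so that invariance under $S_i$ obtained from \eqref{eq:plan} already forces reducing.
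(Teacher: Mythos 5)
Your proposal is correct in substance but reaches the key identities by a genuinely different route than the paper. The paper's proof works with the operator pencils of Proposition \ref{lem:3}: writing each $S_i$ as a $2\times 2$ block matrix over $\mathcal H=\mathcal H_1\oplus\mathcal H_2$, it averages $\Phi_i(\alpha S_1,\dots,\alpha^nP)+\Phi_{n-i}(\beta S_1,\dots,\beta^nP)\ge 0$ over $\alpha,\beta\in\mathbb T$ and applies Lemma \ref{VP1} and Corollary \ref{cor3.1} (the $(1,1)$ corner is forced to vanish, hence so is the off-diagonal corner) to extract exactly your relations $S_{i11}=S_{(n-i)11}^*P_1$, $S_{i12}=S_{(n-i)21}^*P_2$, $S_{i21}=S_{(n-i)12}^*P_1$. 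You obtain the same information in one line from the $\ft$-tuple identity $S_i-S_{n-i}^*P=D_PA_iD_P$ of Theorem \ref{existence-uniqueness} together with $D_P|_{\mathcal H_1}=0$; this is precisely how the authors argue in their proof of the Kubrusly analogue (Theorem \ref{mainthm}), so your route is shorter and internally consistent with the paper, at the cost of invoking the existence--uniqueness theorem for the $\ft$-tuple instead of only the pencil positivity. The endgames coincide up to reformulation: your computation $P^kS_ix=S_{n-i}^*P^{k+1}x=P^*P^{k+1}S_ix$, which gives $\|P^kS_ix\|=\|S_ix\|$ and hence $S_ix\in\mathcal H_1$ under hypothesis (b), is the paper's ``$\overline{\text{Ran}}\,S_{i21}$ is a $P_2$-invariant subspace on which $P_2$ is isometric, contradicting maximality'' in different clothing, and your step $S_i^*x=P^*S_{n-i}x\in\mathcal H_1$ is their deduction $S_{i12}=S_{(n-i)21}^*P_2=0$.

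One point needs tightening in case (a): the set $\{y:\ D_PP^ky=0=D_PP^{*k}y\ \forall k\ge 0\}$ you exhibit is not obviously invariant under $P$ (membership does not give $D_PP^{*k}Py=0$), so calling it ``a reducing subspace of $P$ on which $P$ acts as a pure isometry'' is not yet justified. The repair is immediate with your own method: since $S_i$ commutes with $P$, $P^*$ and, by functional calculus, with $D_P$, one has $D_PW(P,P^*)S_ix=S_iD_PW(P,P^*)x=0$ for \emph{every} word $W$ in $P$ and $P^*$, and $\bigcap_W\ker\bigl(D_PW(P,P^*)\bigr)$ genuinely is the maximal reducing subspace on which $P$ is isometric, i.e.\ $\mathcal H_1$. (The paper sidesteps this by using $S_iP^*=P^*S_i$ only to show that $\overline{\text{Ran}}\,S_{i21}$ reduces $P_2$ and that $P_2$ is isometric there, contradicting complete non-isometry.) Finally, for conclusion (2) the clean citation is Theorem \ref{thm:gamma-ui} rather than Theorem \ref{gammaiso}: the restriction to the reducing subspace $\mathcal H_1$ is a $\Gamma_n$-contraction whose last entry is an isometry, which already makes it a $\Gamma_n$-isometry without verifying the $\Gamma_{n-1}$-contractivity condition in Theorem \ref{gammaiso}(2).
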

\begin{proof}
    \textbf{Case-I.} Let $P^*$ commute with $S_i$
    for $i = 1, \dots, n-1$. If $P$ is a c.n.i contraction
    then $\mathcal H_1=\{0\}$ and if $P$ is a c.n.u
    isometry then $\mathcal H=\mathcal H_1$ and so $\mathcal H_2=\{0\}$.
    In such cases the theorem is trivial. Suppose $P$ is neither a
    c.n.u isometry nor a c.n.i contraction. It is evident from Theorem \ref{thmlv1} that $\mathcal
    H_1 \subseteq \mathcal H$ is the maximal subspace which reduces
    $P$ and on which $P$ is a c.n.i isometry, i.e, a unilateral shift. Let us denote $P|_{\mathcal
    H_1}, P|_{\mathcal H_2}$ by $P_1$ and $P_2$ respectively.
    With respect to the decomposition $\mathcal H=\mathcal H_1\oplus
    \mathcal H_2$ suppose
    \[
    S_i=
    \begin{bmatrix}
    S_{i11}&S_{i12}\\
    S_{i21}&S_{i22}
    \end{bmatrix},
    \quad i=1,\dots, n-1.
    \]
    By Proposition \ref{lem:3}, we have for all $\alpha, \beta \in \mathbb{T}$,
    \begin{align*}
    \Phi_i(\alpha S_1, \dots, \alpha^{n-1}S_{n-1},\alpha^nP) = &
    \tilde{n}_i^2(I - P^*P) + (S_i^*S_i - S_{n-i}^*S_{n-i})
    \\& - 2\tilde{n}_i\text{ Re }\alpha^i(S_i - S_{n-i}^*P)\\ &  \geq 0,
    \end{align*}
    \begin{align*}
    \Phi_{n-i}(\beta S_1, \dots, \beta^{n-1}S_{n-1},\beta^nP) =
    & \tilde{n}_i^2(I - P^*P) + (S_{n-i}^*S_{n-i} - S_{i}^*S_{i})
    \\& - 2\tilde{n}_i\text{ Re }\beta^i(S_{n-i} - S_{i}^*P)\\ &  \geq 0,
    \end{align*}
    Adding $\Phi_i$ and $\Phi_{n-i}$ we get
    \[
    \tilde{n}_i(I-P^*P)-\text{Re }\alpha^i(S_i-S_{n-i}^*P)-\text{Re }
    \beta^{n-i}(S_{n-i}-S_i^*P)\geq 0
    \]
    that is
    \begin{align}\label{eqn:5}
    \begin{bmatrix}
    0&0\\
    0&\tilde{n}_i(I-P_2^*P_2)
    \end{bmatrix}
    -& \text{ Re }\alpha^i
    \begin{bmatrix}
    S_{i11}-S_{(n-i)11}^*P_1 & S_{i12}-S_{(n-i)21}^*P_2\\
    S_{i21}-S_{(n-i)12}^*P_1&S_{i22}-S_{(n-i)22}^*P_2
    \end{bmatrix}  \notag \\
    -&\text{ Re }\beta^{n-i}
    \begin{bmatrix}
    S_{(n-i)11}-S_{i11}^*P_1&S_{(n-i)12}-S_{i21}^*P_2\\
    S_{(n-i)21}-S_{i12}^*P_1&S_{(n-i)22}-S_{i22}^*P_2
    \end{bmatrix}\, \geq 0
    \end{align}
    for all $\alpha,\beta\in\mathbb T$. The matrix in the left
    hand side of (\ref{eqn:5}) is positive and hence self-adjoint. If we write (\ref{eqn:5}) as
    \begin{equation}\label{eqn:6}
    \begin{bmatrix}
    R&X\\
    X^*&Q
    \end{bmatrix}
    \geq 0\,,
    \end{equation}
    then
    \begin{align*}
    &(\mbox{i})\; R\,, Q \geq 0 \text{ and } R=-\text{ Re } \alpha^i (
    S_{i11}-S_{(n-i)11}^*P_1)\\& \qquad\qquad\qquad \qquad\quad\,\,\, -\text{ Re }\beta^{n-i}
    (S_{(n-i)11}-S_{i11}^*P_1)\\& (\mbox{ii}) X = -\frac{1}{2} \{
    \alpha^i (
    S_{i12}-S_{(n-i)21}^*P_2)+\bar{ \alpha}^i(S_{i21}^*-P_1^*S_{(n-i)12})\\& \qquad \quad \,\,  + \beta^{n-i}
    (S_{(n-i)12}-S_{i21}^*P_2)
    \text{  }\, +\bar{\beta}^{n-i}(S_{(n-i)21}^*-P_1^*S_{i12})
    \}
    \\&(\mbox{iii})\; Q=\tilde{n}_i (I-P_2^*P_2)-\text{ Re } \alpha^i (S_{i22}-S_{(n-i)22}^*P_2)\\& \qquad\qquad\qquad \qquad\qquad - \text{ Re }\beta^{n-i}
    (S_{(n-i)22}-S_{i22}^*P_2) \,.
    \end{align*}

    Since $R\geq 0$ for all $ \alpha$ and $\beta$, if we choose $\beta = \pm 1$, then for all $ \alpha\in\mathbb T$ we have
    \[
    \alpha^i(S_{i11}-S_{(n-i)11}^*P_1)+\bar{ \alpha}^i(S_{i11}^*-P_1^*S_{(n-i)11})\leq
    0.
    \]
    Now choosing $ \alpha =\pm 1$ we get
    \begin{equation}\label{eqn:7}
    (S_{i11}-S_{(n-i)11}^*P_1)+(S_{i11}^*-P_1^*S_{(n-i)11})=0
    \end{equation}
    and choosing $ \alpha =\pm i$ we get
    \begin{equation}\label{eqn:8}
    (S_{i11}-S_{(n-i)11}^*P_1)-(S_{i11}^*-P_1^*S_{(n-i)11})=0\,.
    \end{equation}
    Then, from (\ref{eqn:7}) and (\ref{eqn:8}) we get
    \[
    S_{i11} = S_{(n-i)11}^*P_1.
    \]
    Therefore, $R = 0$. Then by corollary \ref{cor3.1}, we have $X = 0$. Therefore,
    \begin{align*}
    &  \alpha^i (
    S_{i12}-S_{(n-i)21}^*P_2)+\bar{ \alpha}^i(S_{i21}^*-P_1^*S_{(n-i)12})+\beta^{n-i}
    (S_{(n-i)12}-S_{i21}^*P_2)\\&
    \qquad \qquad \qquad \qquad\qquad\qquad \qquad \qquad \quad +\bar{\beta}^{n-i}(S_{(n-i)21}^*-P_1^*S_{i12}) = 0,
    \end{align*}
    for all $ \alpha,\beta \in\mathbb T$. If we choose $\beta =\pm 1$, then for all $ \alpha \in \mathbb T$ we have
    \[
    \alpha^i (S_{i12}-S_{(n-i)21}^*P_2)+\bar{ \alpha}^i(S_{i21}^*-P_1^*S_{(n-i)12})=0\,.
    \]
    Choosing $ \alpha=1 \text{ and }i$, then we have
    \begin{equation}\label{eqn:9}
    S_{i12} = S_{(n-i)21}^*P_2 \quad\text{ and }\quad S_{i21}^* = P_1^*S_{(n-i)12}\,.
    \end{equation}

    Since $S_iP = PS_i$, so we have
    \begin{align}
    S_{i11}P_1&=P_1S_{i11}    & S_{i12}P_2=P_1S_{i12}\,, \label{eqn:2.5} \\
    S_{i21}P_1&=P_2S_{i21}    & S_{i22}P_2=P_2S_{i22}\,, \label{eqn:2.6}
    \end{align}
    From the first equation in (\ref{eqn:2.6}) we have that range of $S_{i21}$ is invariant under $P_2$.
    Again from $S_iP^*=P^*S_i$ we have
    \[
    S_{i21}P_1^* = P_2^*S_{i21},
    \]
    that is, the range of $S_{i21}$ is invariant under $P_2^*$ also. Therefore, range of $S_{i21}$
    is a reducing subspace for $P_2$. The first equation in (\ref{eqn:2.6})
    and the equations in (\ref{eqn:9}) provide
    \[
    P_2^*P_2S_{i21}  = P_2^*S_{i21}P_1 = S_{(n-i)12}^*P_1 = S_{i21}.
    \]
    Therefore, $P_2$ is isometry on the range of $S_{i21}$.
    But $P_2$ is a c.n.i contraction. So we must have $S_{i21}=0$. Again from (\ref{eqn:9}),
    we have $S_{i12} = 0$. Thus with respect to the decomposition
    $\mathcal{H} =\mathcal{H}_1 \oplus \mathcal{H}_2$
    \[
    S_i=
    \begin{bmatrix}
    S_{i11}&0\\
    0&S_{i22}
    \end{bmatrix}\, \text{ for all } i = 1, \dots, n-1.
    \]
    Thus both $\mathcal{H}_1$ and $\mathcal{H}_2$ reduce
    $S_1, \dots, S_{n-1}$. Now $(S_{111},\dots, S_{(n-1)11},P_1)$
    being the restriction of the $\Gamma_n$-contraction
    $(S_1, \dots, S_{n-1}, P)$ to the reducing subspace
    $\mathcal{H}_1$ is a $\Gamma_n$-contraction where $P_1$ is an isometry.
    Therefore, by theorem \ref{gammaiso}, $(S_{111},\dots, S_{(n-1)11},P_1)$
    on $\mathcal{H}_1$ is a c.n.u $\Gamma_n$-isometry.
    Since $P_2$ is a c.n.i contraction, $(S_{122}, \dots, S_{(n-1)22}, P_2)$
    on $\mathcal{H}_2$ is a c.n.i $\Gamma_n$-contraction.\\

    \textbf{Case-II.} Suppose $\mathcal{H}_1$ is the maximal invariant
    subspace for $P$ on which $P$ is isometry. Now from the first equation
    in (\ref{eqn:2.6}) and the equations in (\ref{eqn:9}), we have that the range of
    $S_{i21}$ is invariant under $P_2$ and
    \[
    P_2^*P_2S_{i21}  = P_2^*S_{i21}P_1 = S_{(n-i)12}^*P_1 = S_{i21}.
    \]
    This shows that $P_2$ is isometry on range of $S_{i21}$.
    Therefore, we must have $S_{i21}=0.$ Now from (\ref{eqn:9}),
    $S_{i12} = 0$. Using the same arguments as in the previous
    case we have that $(S_{111}, \dots, S_{(n-1)11}, P_1)$ on $\mathcal{H}_1$
    is a c.n.u $\Gamma_n$-isometry and $(S_{122},
    \dots, S_{(n-1)22}, P_2)$ on $\mathcal{H}_2$ is c.n.i $\Gamma_n$-contraction.
\end{proof}

The conclusions of the above theorem may fail if we drop either of
the conditions that $S_iP^*=P^*S_i$ for any $i = 1,\dots, n-1$ or
that $\mathcal{H}_1$ is the maximal invariant subspace for $P$ on
which $P$ is an isometry. We shall provide a counter example but
before that we recall a few necessary results. First we state the
well-known Ando dilation for a pair of commuting contractions
$(T_1,T_2)$.

\begin{thm}[Ando, \cite{Ando}]\label{thm2.1}
    For every commuting pair of contractions $(T_1, T_2)$ on a
    Hilbert space $\mathcal{H}$ there exists a commuting pair of
    unitaries $(U_1, U_2)$ on a Hilbert space $\mathcal{K}$
    containing $\mathcal{H}$ as a subspace such that
    \[
    T_1^{n_1}T_2^{n_2} = P_{\mathcal{H}}U_1^{n_1}U_2^{n_2}|_{\mathcal{H}}\,
    \qquad \text{ for } n_1, n_2 \geq 0.
    \]
\end{thm}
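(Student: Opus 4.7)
The plan is to prove Ando's theorem in two stages following the classical reduction: first construct a commuting pair of isometric dilations $(V_1,V_2)$ on some intermediate Hilbert space $\mathcal{K}_0 \supseteq \mathcal{H}$, and then extend this pair to a commuting pair of unitaries $(U_1,U_2)$ on $\mathcal{K} \supseteq \mathcal{K}_0$. The dilation property $T_1^{n_1}T_2^{n_2} = P_{\mathcal H} U_1^{n_1}U_2^{n_2}|_{\mathcal H}$ then drops out immediately from the fact that $\mathcal{H}$ is co-invariant for each $V_i$ (hence for each $U_i$), together with commutativity, since $P_{\mathcal H} U_1^{n_1} U_2^{n_2}|_{\mathcal H} = P_{\mathcal H} V_1^{n_1} V_2^{n_2}|_{\mathcal H} = T_1^{n_1} T_2^{n_2}$.

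For the first stage I would use Ando's own construction on $\mathcal{K}_0 = \mathcal{H} \oplus \ell^2(\mathbb{Z}_+, \mathcal{D})$, where $\mathcal{D} = \mathcal{D}_{T_1} \oplus \mathcal{D}_{T_2}$ and $D_i = (I-T_i^*T_i)^{1/2}$. The key observation is that, using $T_1T_2 = T_2T_1$,
\begin{equation*}
\|D_{T_1}T_2 h\|^2 + \|D_{T_2}h\|^2 \;=\; \|h\|^2 - \|T_1T_2 h\|^2 \;=\; \|D_{T_1}h\|^2 + \|D_{T_2}T_1 h\|^2
\end{equation*}
for every $h \in \mathcal H$. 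Hence the map $W_0(D_{T_1}h, D_{T_2}T_1 h) := (D_{T_1}T_2 h, D_{T_2}h)$ is a well-defined isometry between subspaces of $\mathcal{D}$, which can be extended to a unitary $W$ on $\mathcal{D}$ (enlarging $\mathcal{D}$ by an auxiliary summand if necessary). I would then define $V_1$ and $V_2$ as block-lower-triangular operators on $\mathcal{H} \oplus \mathcal{D} \oplus \mathcal{D} \oplus \cdots$, with $V_i|_{\mathcal H} = T_i$ on the $\mathcal H$ component, shifting with appropriate defect inputs on the remaining copies of $\mathcal{D}$, and with the shift step twisted by $W$ in exactly one of the two dilations so that $V_1 V_2 = V_2 V_1$ is forced by the unitarity relation $W(D_{T_1}h, D_{T_2}T_1 h) = (D_{T_1}T_2 h, D_{T_2}h)$. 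A direct computation on the first two block rows will verify both the isometric property $V_i^*V_i = I$ and commutativity.

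For the second stage, given any commuting isometries $(V_1,V_2)$ on $\mathcal{K}_0$, I would invoke (or reprove) It\^o's extension: first form $\mathcal{K}_1 = \bigvee_{n\geq 0} V_1^{-n}\mathcal{K}_0$ inside a larger space on which $V_1$ extends to a unitary $U_1$ defined by $U_1 V_1^{-n} x = V_1^{-(n-1)} x$, while $V_2$ extends coherently to a commuting isometry $\tilde V_2$ via $\tilde V_2 V_1^{-n} x := V_1^{-n} V_2 x$ (well-defined and isometric precisely because $V_1 V_2 = V_2 V_1$). Applying the same construction once more to $\tilde V_2$ inside the resulting space produces a commuting unitary extension $U_2$ of $\tilde V_2$ while preserving $U_1$ unitary, because adjoining inverse powers of $\tilde V_2$ does not disturb the already unitary $U_1$.

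The main obstacle is the first stage: the existence of an isometric dilation with \emph{commuting} summands does not follow from single-variable Sz.-Nagy theory applied twice, and the whole point of Ando's theorem is that the naive product construction fails. The delicate part of the proof is therefore setting up the unitary $W$ on the defect space so that the two block isometries $V_1, V_2$ commute; once $W$ is in hand, the rest of the argument is a bookkeeping verification. Finally I would note that the same plan fails already for three commuting contractions (Parrott's example), which confirms that the two-variable restriction is essential and that the argument above cannot be iterated.
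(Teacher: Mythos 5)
The paper does not prove this statement: Theorem \ref{thm2.1} is quoted from Ando's 1963 paper \cite{Ando} as a known ingredient (it is used only to manufacture the $\Gamma_n$-contraction in Example \ref{example1}), so there is no internal proof to compare yours against. Judged on its own terms, your sketch follows the standard two-stage route --- Ando's construction of a pair of \emph{commuting} isometric dilations followed by It\^o's extension of commuting isometries to commuting unitaries --- and the identity $\|D_{T_1}T_2h\|^2+\|D_{T_2}h\|^2=\|h\|^2-\|T_1T_2h\|^2=\|D_{T_1}h\|^2+\|D_{T_2}T_1h\|^2$ is exactly the right key lemma. The passage from co-invariance of $\mathcal{H}$ under both $V_i$ (hence under both $U_i$, since $\mathcal K_0$ is invariant for each $U_i$ and $U_i|_{\mathcal K_0}=V_i$) to the power-dilation identity is also correct.

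One imprecision is worth flagging in stage one. With the natural block definitions $V_i(h,d_1,d_2,\dots)=(T_ih,\Delta_ih,d_1,d_2,\dots)$, where $\Delta_1h=(D_{T_1}h,0)$ and $\Delta_2h=(0,D_{T_2}h)$ in $\mathcal{D}=\mathcal{D}_{T_1}\oplus\mathcal{D}_{T_2}$, the products $V_1V_2$ and $V_2V_1$ disagree in the first \emph{two} defect slots: one produces $\bigl((D_{T_1}T_2h,0),(0,D_{T_2}h)\bigr)$ and the other $\bigl((0,D_{T_2}T_1h),(D_{T_1}h,0)\bigr)$. The isometry that has to be extended to a unitary therefore lives on subspaces of $\mathcal{D}\oplus\mathcal{D}$, not of $\mathcal{D}$, and the corrected $V_2$ is obtained by twisting the shift with that unitary applied blockwise to consecutive pairs of slots. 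Your $W_0$, defined on single elements of $\mathcal{D}$ by $(D_{T_1}h,D_{T_2}T_1h)\mapsto(D_{T_1}T_2h,D_{T_2}h)$, is indeed well defined and isometric by the same identity, but it does not by itself patch the two-slot discrepancy; you would need to regroup $\ell^2(\mathbb{Z}_+,\mathcal{D})$ as $\ell^2(\mathbb{Z}_+,\mathcal{D}\oplus\mathcal{D})$ (or otherwise specify precisely where $W$ acts) before the ``bookkeeping verification'' actually closes. The second stage --- It\^o's inductive-limit extension, including the point that adjoining inverse powers of $\tilde V_2$ does not destroy the unitarity of the already-extended $U_1$ --- is fine, as is your closing remark that Parrott's example blocks any extension of this argument to three commuting contractions.
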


Since a commuting pair of contractions $(T_1,T_2)$ on $\mathcal H$
dilates to a commuting pair of unitaries $(U_1,U_2)$ defined on a
bigger Hilbert space $\mathcal K$, it is easy to verify that the
$n$-tuple of commuting contractions $(I_{\mathcal{H}}, \dots,
I_{\mathcal{H}}, T_1, T_2)$ on $\mathcal{H}$ can easily dilate to
the commuting $n$-tuple of unitaries $(I_{\mathcal{K}}, \dots,
I_{\mathcal{K}}, U_1, U_2)$ on $\mathcal{K}$. Therefore, von
Neumann inequality holds on the closed polydisc $\overline{\mathbb
D^n}$ for $(I_{\mathcal{H}}, \dots, I_{\mathcal{H}}, T_1, T_2)$,
that is,
    \[
    \|p(I_{\mathcal{H}}, \dots, I_{\mathcal{H}}, T_1, T_2)\| \leq \|p\|_{\infty, \bar{\mathbb{D}}^n}
    \,, \text{ for any } p\in\mathbb C[z_1,\dots,z_n]\,.
    \]
So, it follows that von Neumann inequality holds on
$\pi_n(\overline{\mathbb D^n})=\Gamma_n$ for $\pi_n
(I_{\mathcal{H}}, \dots, I_{\mathcal{H}}, T_1, T_2)$ which is same
as saying that $\pi_n (I_{\mathcal{H}}, \dots, I_{\mathcal{H}},
T_1, T_2)$ on $\mathcal H$ is a $\Gamma_n$-contraction.\\

\noindent Now let us consider the following example.

\begin{eg}\label{example1}
    Let $\mathcal{H} = \ell^2$, where
    \[
    \ell^2 = \left\{\{x_n\}_{n=1}^{\infty} : x_n \in \mathbb{C} \text{ and } \sum_{n=1}^{\infty}|x_n|^2 < \infty\right\}.
    \]
    Consider the operator $P : \ell^2 \rightarrow \ell^2$ defined by
    \[
    P(x_1, x_2, x_3, \dots) = (0, \frac{x_1}{2}, x_2, x_3, \dots).
    \]
    Now consider $T_1 = T_2 = P$. Then obviously
    $\pi_n(I_{\mathcal{H}}, \dots, I_{\mathcal{H}}, P, P)$ is a $\Gamma_n$-contraction. It is clear that
    \[
    P^{*2}P^2(x_1, x_2, x_3, \dots) = (\frac{x_1}{4}, x_2, x_3, \dots).
    \]
    Suppose $H = \{(0, x_1, x_2, \dots) : x_i \in \mathbb{C}\}$.
    Then clearly $H$ is the maximal invariant subspace for $P^2$
    on which $P^2$ is isometry. Suppose $H_1 (\subseteq H)$ is
    the maximal reducing subspace for $P^2$ on which $P^2$ is isometry.\\

   \noindent \textbf{Claim.} $H_1 = \{(0, x_1, 0, x_2, 0, x_3, \dots) : x_i \in \mathbb{C}\}$.\\

    \noindent \textbf{Proof of claim.}
    It is clear that $\{(0, x_1, 0, x_2, 0, x_3, \dots) : x_i \in \mathbb{C}\} \subseteq H_1$.
    Suppose $(0, \dots, 0, \underbrace{x_{2n+1}}_{\text{(2n+1)th position}}, 0, \dots) \in H_1$.
    Since $H_1$ is reducing subspace for $P^2$ so $P^{*2n}(0, \dots, 0, x_{2n+1}, 0, \dots) \in H_1$
    i.e., $\left( \dfrac{x_{2n+1}}{2}, 0, 0, \dots \right) \in H$. Therefore, we must have $x_{2n+1} = 0$.
    This completes the proof of the claim.\\
    One can easily check that $P^*P^2 \neq P^2P^*$
    and $ H \neq H_1$. It is clear that $H_1$ is not a
    reducing subspace for $P$. Therefore, the conclusion of Theorem \ref{cd1}
    is not true for the $\Gamma_n$-contraction
    $\pi_n(I_{\mathcal{H}}, \dots, I_{\mathcal{H}}, P, P)$ on $\mathcal{H} = \ell^2$.
\end{eg}

If $P$ is a contraction, then $\{ P^{*n}P^n: n \geq 1\}$ is a
non-increasing sequence of self-adjoint contractions so that it
converges strongly. Similarly, the sequence $\{P^nP^{*n}\ : n \geq
1 \}$ also converges strongly. Suppose $\mathcal{A}$ is the strong
limit of $\{ P^{*n}P^n : n \geq 1\}$ and $\mathcal{A}_*$ is the
strong limit of $\{P^nP^{*n}\ : n \geq 1 \}$. In \cite{K:C1}, the
following decomposition theorem for a contraction was proved by
Kubrusly.

\begin{thm}[{\cite{K:C1}, Theorem 1}]\label{Kdecom}
    Let $P$ be a contraction on $\mathcal{H}$. If $\mathcal{A} = \mathcal{A}^2$, then
    \[
    P = P_1 \oplus P_2 \oplus U,
    \]
    where $P_1$ is a strongly stable contraction acting on $\text{Ker}(\mathcal{A})$, $P_2$
    is a unilateral shift acting on $\text{Ker}(I-\mathcal{A})\cap \text{Ker}(\mathcal{A}_*)$
    and $U$ is a unitary acting on $\text{Ker}(I-\mathcal{A})\cap \text{Ker}(I-\mathcal{A}_*)$.
    Moreover, if $\mathcal{A} = \mathcal{A}^2$ and $\mathcal{A}_* = \mathcal{A}_*^2$, then
    \[
    P = P_1^{\circ} \oplus P_2^{\circ} \oplus P_2 \oplus U,
    \]
    where $P_1^{\circ}$ is a $\mathcal{C}_{00}$-contraction on
    $\text{Ker}(\mathcal{A})\cap \text{Ker}(\mathcal{A}_*)$ and $P_2^{\circ}$
    is a backward unilateral shift on $\text{Ker}(\mathcal{A})\cap \text{Ker}(I-\mathcal{A}_*).$
    Furthermore, if $\mathcal{A} = \mathcal{A}_*$, then
    \[
    P = P_1^{\circ} \oplus U.
    \]
\end{thm}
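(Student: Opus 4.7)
The plan is to exploit the identity $P^{*}\mathcal{A}P=\mathcal{A}$, which follows from $P^{*(n+1)}P^{n+1}=P^{*}(P^{*n}P^{n})P$ by strong convergence, together with the hypothesis $\mathcal{A}=\mathcal{A}^{2}$, under which $\mathcal{A}$ is self-adjoint and idempotent, hence an orthogonal projection. Setting $Q=\mathcal{A}$, the identity gives $\|QPx\|^{2}=\langle P^{*}QPx,x\rangle=\langle Qx,x\rangle=\|Qx\|^{2}$. From this I would deduce that $P$ sends $\text{Ker}(\mathcal{A})$ to itself (if $Qx=0$ then $QPx=0$) and $\text{Ker}(I-\mathcal{A})$ to itself (if $Qx=x$ then $\|QPx\|=\|x\|\le\|Px\|\le\|x\|$, forcing $Px\in\text{Ran}(Q)$ and simultaneously $\|Px\|=\|x\|$). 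Thus $\mathcal{H}=\text{Ker}(\mathcal{A})\oplus\text{Ker}(I-\mathcal{A})$ reduces $P$.

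Next I would identify the two summands directly: on $\text{Ker}(\mathcal{A})$ one has $\|P^{n}x\|^{2}=\langle P^{*n}P^{n}x,x\rangle\to\langle\mathcal{A}x,x\rangle=0$, so $P$ is strongly stable; on $\text{Ker}(I-\mathcal{A})$, the computation above shows $P$ is an isometry. I would then apply the Wold decomposition to this isometric part to write it as a pure unilateral shift on some subspace $\mathcal{N}_{s}$ plus a unitary on $\mathcal{N}_{u}$.

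The step that I expect to require the most care is identifying $\mathcal{N}_{s}$ and $\mathcal{N}_{u}$ with the stated intersections. The idea is that on the isometric summand, $P^{n}P^{*n}$ is the orthogonal projection onto $\text{Ran}(P^{n})$, and these ranges decrease to $\{0\}$ on the shift part while remaining equal to $\mathcal{N}_{u}$ on the unitary part. Consequently $\mathcal{A}_{*}$ restricted to $\text{Ker}(I-\mathcal{A})$ is precisely the projection onto $\mathcal{N}_{u}$, giving $\mathcal{N}_{s}=\text{Ker}(I-\mathcal{A})\cap\text{Ker}(\mathcal{A}_{*})$ and $\mathcal{N}_{u}=\text{Ker}(I-\mathcal{A})\cap\text{Ker}(I-\mathcal{A}_{*})$. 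Combined with the strongly stable piece $P_{1}$ on $\text{Ker}(\mathcal{A})$, this completes the first decomposition.

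For the finer decomposition when also $\mathcal{A}_{*}=\mathcal{A}_{*}^{2}$, I would apply the same scheme to $P^{*}$: now $\mathcal{A}_{*}$ is an orthogonal projection, its kernels reduce $P$, and on $\text{Ker}(I-\mathcal{A}_{*})$ the operator $P$ is a co-isometry. Intersecting with the earlier splitting, the piece $\text{Ker}(\mathcal{A})\cap\text{Ker}(\mathcal{A}_{*})$ carries a $\mathcal{C}_{00}$-contraction (both $P$ and $P^{*}$ strongly stable), while on $\text{Ker}(\mathcal{A})\cap\text{Ker}(I-\mathcal{A}_{*})$ the operator $P$ is simultaneously strongly stable and a co-isometry, so its adjoint is a pure isometry, i.e.\ a unilateral shift, and $P$ itself is a backward unilateral shift. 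Finally, when $\mathcal{A}=\mathcal{A}_{*}$ the two projections coincide, whence the intersections $\text{Ker}(I-\mathcal{A})\cap\text{Ker}(\mathcal{A}_{*})$ and $\text{Ker}(\mathcal{A})\cap\text{Ker}(I-\mathcal{A}_{*})$ both collapse to $\{0\}$; the shift and backward-shift summands vanish and only $P_{1}^{\circ}\oplus U$ survives.
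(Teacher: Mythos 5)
Your argument is correct. Note, however, that the paper does not prove this statement at all: Theorem \ref{Kdecom} is imported verbatim from Kubrusly--Vieira--Pinto \cite{K:C1} and used as a black box, so there is no in-paper proof to compare against. Your route is the standard one and is essentially the argument the paper silently relies on elsewhere (its proof of Theorem \ref{mainthm} invokes the same ingredients from \cite{K:C}: the intertwining $P^*\mathcal{A}P=\mathcal{A}$ making $\mathrm{Ker}(\mathcal{A})$ and $\mathrm{Ker}(I-\mathcal{A})$ reducing, strong stability on $\mathrm{Ker}(\mathcal{A})$, isometricity on $\mathrm{Ker}(I-\mathcal{A})$, and the Wold decomposition with $\mathcal{A}_*$ restricted to the isometric part identified as the projection onto the unitary summand). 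Two small points you leave implicit but should record: (i) to intersect the two splittings in the ``moreover'' clause you need $\mathcal{A}$ and $\mathcal{A}_*$ to commute, which follows because $\mathrm{Ker}(\mathcal{A})$ and $\mathrm{Ker}(I-\mathcal{A})$ reduce $P$ and hence reduce $\mathcal{A}_*$; (ii) in the ``furthermore'' clause the hypothesis is only $\mathcal{A}=\mathcal{A}_*$, and the fact that this alone forces $\mathcal{A}=\mathcal{A}^2$ (so that your projection argument applies) is Proposition 3.4 of \cite{K:C}, which the paper itself cites at the corresponding step of Theorem \ref{mainthm}. Neither is a genuine gap.
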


\noindent Here we present an analogue of Theorem \ref{Kdecom} for
a $\Gamma_n$-contraction and this is another main result of this
article.

\begin{thm}\label{mainthm}
    Let $(S_1, \dots, S_{n-1},P)$ be a $\Gamma_n$-contraction on a
    Hilbert space $\mathcal{H}$ and let $\mathcal A$ be as in Theorem \ref{Kdecom}.
    If $\mathcal{A} = \mathcal{A}^2$, then
    \begin{enumerate}
        \item $\text{Ker}(\mathcal{A})$, $\text{Ker}(I-\mathcal{A})\cap
        \text{Ker}(\mathcal{A}_*)$ and $\text{Ker}(I-\mathcal{A})\cap
        \text{Ker}(I-\mathcal{A}_*)$ reduce $S_1, \dots, S_{n-1}$,
        \item $\left(S_1|_{\text{Ker}(\mathcal{A})}, \dots, S_{(n-1)}
        |_{\text{Ker}(\mathcal{A})},P|_{\text{Ker}(\mathcal{A})}\right)$
        is a strongly stable $\Gamma_n$-contraction,
        \item $\left(S_1, \dots, S_{n-1}, P \right)$ on ${\text{Ker}
        (I-\mathcal{A})\cap \text{Ker}(\mathcal{A}_*)}$ is a pure $\Gamma_n$-isometry,
        \item $\left(S_1, \dots, S_{n-1}, P \right)$ on ${\text{Ker}
        (I-\mathcal{A})\cap \text{Ker}(I-\mathcal{A}_*)}$ is a $\Gamma_n$-unitary.
    \end{enumerate}
    Moreover, if $\mathcal{A} = \mathcal{A}^2$ and $\mathcal{A}_* = \mathcal{A}_{*}^2$, then
    \begin{enumerate}
        \item[$(1')$] $\left(S_1|_{\text{Ker}(\mathcal{A})\cap
        \text{Ker}(\mathcal{A}_{*})}, \dots, S_{(n-1)}|_{\text{Ker}
        (\mathcal{A})\cap \text{Ker}(\mathcal{A}_{*})},P|_{\text{Ker}
        (\mathcal{A})\cap \text{Ker}(\mathcal{A}_{*})}\right)$ is
        a $\mathcal{C}_{00}$  $\Gamma_n$-contraction,
        \item[$(2')$] $\left(S_1, \dots, S_{n-1}, P \right)$ on
        ${\text{Ker}(\mathcal{A})\cap \text{Ker}(I-\mathcal{A}_{*})}$
        is a $\Gamma_n$-contraction such that $P$ is a backward unilateral shift.
    \end{enumerate}
    Furthermore, if $\mathcal{A} = \mathcal{A}_{*}$, then
    \begin{enumerate}
        \item[$(1'')$] $\big(S_1|_{\text{Ker}(\mathcal{A})}, \dots, S_{(n-1)}
        |_{\text{Ker}(\mathcal{A})},P|_{\text{Ker}(\mathcal{A})}\big)$
        is a $\mathcal{C}_{00}$  $\Gamma_n$-contraction,
        \item[$(2'')$] $\big(S_1|_{\text{Ker}(I-\mathcal{A})},
        \dots, S_{(n-1)}|_{\text{Ker}(I-\mathcal{A})},
        P|_{\text{Ker}(I-\mathcal{A})}\big)$ is a $\Gamma_n$-unitary.
    \end{enumerate}
\end{thm}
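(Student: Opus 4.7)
The plan is to combine two already-established pieces of structure theory---the canonical decomposition of a $\Gamma_n$-contraction from \cite{sourav3} and the Levan-type decomposition of Theorem \ref{cd1}---using Kubrusly's Theorem \ref{Kdecom} as the dictionary that identifies the three summands as concrete spectral subspaces of $P$.

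First, peel off the $\Gamma_n$-unitary piece. Under $\mathcal{A} = \mathcal{A}^2$, Theorem \ref{Kdecom} identifies the maximal reducing subspace of $\mathcal H$ on which $P$ acts unitarily as $\text{Ker}(I-\mathcal A)\cap \text{Ker}(I-\mathcal A_*)$. The canonical decomposition of the $\Gamma_n$-contraction $(S_1,\dots,S_{n-1},P)$ from \cite{sourav3} then shows that this subspace and its orthogonal complement
\[
\mathcal{H}_{\mathrm{cnu}} \;:=\; \text{Ker}(\mathcal A)\oplus \bigl[\text{Ker}(I-\mathcal A)\cap \text{Ker}(\mathcal A_*)\bigr]
\]
both reduce $S_1,\dots,S_{n-1}$, and that the restriction to the former is a $\Gamma_n$-contraction with unitary last coordinate, hence a $\Gamma_n$-unitary by Theorem \ref{gammauni}.

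Next, split $\mathcal H_{\mathrm{cnu}}$ via Theorem \ref{cd1}. The restriction of $\mathcal A$ to $\mathcal H_{\mathrm{cnu}}$ coincides with the orthogonal projection onto $\text{Ker}(I-\mathcal A)\cap \text{Ker}(\mathcal A_*)$, and the standard identity $\mathcal A x = x \iff \|P^n x\|=\|x\|$ for all $n$ shows that this subspace is simultaneously the maximal $P$-invariant subspace of $\mathcal H_{\mathrm{cnu}}$ on which $P$ is isometric and the $\mathcal H_1$-summand produced by Levan's Theorem \ref{thmlv1} for the c.n.u.\ contraction $P|_{\mathcal H_{\mathrm{cnu}}}$. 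The second alternative in the hypothesis of Theorem \ref{cd1} is therefore satisfied, and that theorem supplies the reducing splitting of $\mathcal H_{\mathrm{cnu}}$ into $\text{Ker}(\mathcal A)$ and $\text{Ker}(I-\mathcal A)\cap \text{Ker}(\mathcal A_*)$, completing (1). Conclusions (2)--(4) are then immediate from the form of the last coordinate on each summand: $P$ is strongly stable on $\text{Ker}(\mathcal A)$, a unilateral shift on $\text{Ker}(I-\mathcal A)\cap \text{Ker}(\mathcal A_*)$ (giving a pure $\Gamma_n$-isometry via Theorem \ref{gammaiso}), and unitary on $\text{Ker}(I-\mathcal A)\cap \text{Ker}(I-\mathcal A_*)$.

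For the ``moreover'' statements, I rerun the same two steps for the adjoint $\Gamma_n$-contraction $(S_1^*,\dots,S_{n-1}^*,P^*)$, whose associated asymptotic limits interchange $\mathcal A$ and $\mathcal A_*$; the hypothesis $\mathcal A_*=\mathcal A_*^2$ now plays the role that $\mathcal A=\mathcal A^2$ did before. Intersecting the reducing decompositions from the two applications shows that $\text{Ker}(\mathcal A)\cap \text{Ker}(\mathcal A_*)$ and $\text{Ker}(\mathcal A)\cap \text{Ker}(I-\mathcal A_*)$ reduce each $S_i$; $P$ is $\mathcal C_{00}$ on the first and $P^*$ is a unilateral shift on the second, yielding $(1')$ and $(2')$. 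When $\mathcal A=\mathcal A_*$ the mixed subspaces $\text{Ker}(I-\mathcal A)\cap \text{Ker}(\mathcal A_*)$ and $\text{Ker}(\mathcal A)\cap \text{Ker}(I-\mathcal A_*)$ both collapse to $\{0\}$, leaving only $(1'')$ and $(2'')$. The principal obstacle throughout is the invocation of Theorem \ref{cd1} in the second step: because no commutation of the form $P^*S_i=S_iP^*$ is assumed, the Levan-type splitting must be justified through the ``maximal invariant isometry'' alternative, and pinning down $\text{Ker}(I-\mathcal A)\cap \text{Ker}(\mathcal A_*)$ as that maximal invariant subspace within $\mathcal H_{\mathrm{cnu}}$ is where the idempotency $\mathcal A=\mathcal A^2$ enters essentially.
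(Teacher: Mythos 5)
Your argument is correct, but it reaches the conclusion by a genuinely different route from the paper's. You split $\mathcal H$ in the order unitary-part-first: peel off $\text{Ker}(I-\mathcal A)\cap\text{Ker}(I-\mathcal A_*)$ via the canonical decomposition of \cite{sourav3}, then decompose the remaining c.n.u.\ part by invoking Theorem \ref{cd1} through its second hypothesis, after checking that under $\mathcal A=\mathcal A^2$ the Levan summand of $P$ restricted to that part coincides with the maximal invariant subspace on which $P$ is isometric, namely $\text{Ker}(I-\mathcal A)\cap\text{Ker}(\mathcal A_*)$. That identification is the one delicate point of your route, and your justification of it (the identity $\mathcal A x=x\iff\|P^nx\|=\|x\|$ for all $n$, together with the fact that idempotency makes $\text{Ker}(I-\mathcal A)$ reducing) is sound. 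The paper proceeds in the opposite order: it first writes $\mathcal H=\text{Ker}(I-\mathcal A)\oplus\text{Ker}(\mathcal A)$, i.e.\ isometric part plus strongly stable part of $P$, and proves directly that both summands reduce each $S_i$ by expressing the fundamental-operator-tuple identities $S_i-S_{n-i}^*P=D_PF_iD_P$ in block form (since $D_P$ vanishes on the isometric summand this forces $S_{i21}=S_{(n-i)12}^*V$, whence $P|_{\text{Ker}(\mathcal A)}$ is isometric on $\overline{\text{Ran}}\,S_{i21}$ and strong stability gives $S_{i21}=0$); only afterwards does it apply the Wold-type decomposition of $\Gamma_n$-isometries from \cite{S:B} to separate the unitary and pure-isometric pieces. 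Your approach is more modular, recycling two results already established, at the cost of having to verify that the two alternative descriptions of $\mathcal H_1$ in Theorem \ref{cd1} agree in this setting; the paper's computation is self-contained and avoids that verification. The handling of the ``moreover'' and ``furthermore'' clauses by passing to the adjoint tuple is essentially the same in both arguments.
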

\begin{proof}
    If $ \mathcal{A} = \mathcal{A}^2 $, then by
    Proposition 3.3 in \cite{K:C} we have $\mathcal{H} = \text{Ker}
    (I-\mathcal{A}) \oplus \text{Ker}(\mathcal{A}).$ Since $\text{Ker}
    (\mathcal{A})$ and $ \text{Ker}(I - \mathcal{A})$ are invariant
    subspaces for $P$, by Proposition 3.1 in \cite{K:C}, they reduce $P$.
    Thus we have the following decomposition
    \[
    P = V \oplus P_1 \; \text{ on } \; \mathcal{H} = \text{Ker}
    (I-\mathcal{A}) \oplus \text{Ker}(\mathcal{A})\, ,
    \]
    where $P_1 = P|_{\text{Ker}(\mathcal{A})}$ is a strongly stable
    contraction and $V = P|_{\text{Ker}
    (I-\mathcal{A})}$ is an isometry.
    With respect to the decomposition $\mathcal{H} =
    \text{Ker}(I - \mathcal{A}) \oplus \text{Ker}(\mathcal{A}) $, let
    \[
    P =
    \begin{bmatrix}
        V&0\\
        0&P_1
    \end{bmatrix}
    \text{ and }
    S_i =
    \begin{bmatrix}
    S_{i11} & S_{i12}\\
    S_{i21} & S_{i22}
    \end{bmatrix}
    \text{ for all } i = 1, \dots, n-1.
    \]
    Let $(F_1, \dots,F_{n-1})$ be the $\ft$-tuple of $(S_1,\dots, S_{n-1}, P)$.
    Then,
    \[
    S_i - S_{n-i}^*P = D_PF_iD_P, \text{ for } i = 1, \dots, n-1.
    \]
    With respect to the decomposition $\mathcal{D}_P = \{0\}\oplus\mathcal{D}_{P_1}$, let
    \[ F_i=
    \begin{bmatrix}
    0 & 0\\
    0 & F_{i22}
    \end{bmatrix}
    \text{ for } i = 1, \dots, n-1.
    \]
    Then from $S_i - S_{n-i}^*P = D_PF_iD_P$, we have
    \begin{align}
    S_{i11}&=S_{(n-i)11}^*V      & S_{i12}&=S_{(n-i)21}^*P_1\,, \label{eqn:3.1} \\
    S_{i21}&=S_{(n-i)12}^*V    & S_{i22} - S_{(n-i)22}^*P_2&=D_{P_1}F_{122}D_{P_1}\,. \label{eqn:3.2}
    \end{align}
    Since $S_iP = PS_i$ for all $i = 1, \dots, n-1$, so we have
    \begin{align}
    S_{i11}V&=VS_{i11}    & S_{i12}P_1=VS_{i12}\,, \label{eqn:3.3} \\
    S_{i21}V&=P_1S_{i21}    & S_{i22}P_1=P_1S_{i22}\,. \label{eqn:3.4}
    \end{align}
    Now from the first equation in (\ref{eqn:3.4}),
    we have that the range of $S_{i21}$ is an invariant subspace for $P_1$
    and by equation (\ref{eqn:3.2})
    \[
    P_1^*P_1S_{i21} =  P_1^*S_{i21}V
    =(S_{i21}^*P_1)^*V
    =S_{(n-i)12}^*V
    =S_{i21}.
    \]
    This implies that $P_1$ is an isometry on the range of $S_{i21}$.
    Since $P_1$ is strongly stable contraction on $\text{Ker}
    (\mathcal{A})$, so we must have $S_{i21} = 0$. Then from
    the second equation in (\ref{eqn:3.1}), we have
    $ S_{(n-i)12} = 0 $. Therefore, with respect to
    the decomposition $\mathcal{H} =
    \text{Ker}(I - \mathcal{A}) \oplus \text{Ker}(\mathcal{A})$
    \[
    S_i
    =
    \begin{bmatrix}
    S_{i11} & 0\\
    0 & S_{i22}
    \end{bmatrix}  \text{ for all } i = 1, \dots, n-1.
    \]
    So $\text{Ker}(\mathcal{A})$ and $\text{Ker}(I-\mathcal{A})$ reduce
    $S_1, \dots, S_{n-1}$.
    Therefore, the two tuples $(S_{111}, \dots, S_{(n-1)11}, V)$ and $(S_{122},
    \dots, S_{(n-1)22}, P_1)$, by being the restrictions of the
    $\Gamma_n$-contraction $(S_1,\dots, S_{n-1},P)$
    to a joint reducing subspaces, are $\Gamma_n$-contractions on
    $\text{Ker}(I-\mathcal{A})$ and $\text{Ker}(\mathcal{A})$
    respectively. Since $V$ is an isometry on
    $\text{Ker}(I-\mathcal{A})$,
    it follows from Theorem \ref{gammaiso} that $(S_{111}, \dots, S_{(n-1)11}, V)$ is
    a $\Gamma_n$-isometry. Again $P_1$
    is a strongly stable contraction on $\text{Ker}(\mathcal{A})$,
    so $(S_{122}, \dots, S_{(n-1)22}, P_1)$ is a strongly
    stable $\Gamma_n$-contraction on $\text{Ker}(\mathcal{A})$.\\

    Since $V$ is an isometry on $\text{Ker}(I-\mathcal{A})$, so by Wold
    decomposition (see \cite{Nagy}, Section-I) $\text{Ker}(I-\mathcal{A})$
    decomposes into an orthogonal direct sum $\text{Ker}(I-\mathcal{A}) =
    \mathcal{U} \oplus \mathcal{U}^{\perp}$ such that $\mathcal{U}$
    and $\mathcal{U}^{\perp}$ reduce $V$, $V|_{\mathcal{U}}$ is a unitary and
    $V|_{\mathcal{U}^{\perp}}$ is
    a unilateral shift. Following the proof of Theorem 5.8 in \cite{K:C}, we have
    \[
    \mathcal{U} = \text{Ker}(I-\mathcal{A})\cap \text{Ker}(I-\mathcal{A}_{*})
    \text{ and } \mathcal{U}^{\perp} = \text{Ker}(I-\mathcal{A})\cap \text{Ker}(\mathcal{A}_{*}).
    \]
    Since $(S_{111}, \dots, S_{(n-1)11}, V)$ is a $\Gamma_n$-isometry on
    $\text{Ker}(I - \mathcal{A})$ so by Theorem 4.12 in \cite{S:B}, we have
    \begin{enumerate}
        \item[(i)] $\mathcal{U} \text{ and }\mathcal{U}^{\perp}$
        reduce $S_{111}, \dots, S_{(n-1)11} $\,,
        \item[(ii)] $(S_{111}, \dots, S_{(n-1)11}, V)$ on
        $\mathcal{U}^{\perp}$ is a pure $\Gamma_n$-isometry\,,
        \item[(iii)] $(S_{111}, \dots, S_{(n-1)11}, V)$
        on $\mathcal{U}$ is a $\Gamma_n$-unitary.
    \end{enumerate}
Let $\mathcal{A}_{*}^{\prime}$ be the strong operator limit of
$\{P^nP^{*n} : n\geq 1\}$ on $\text{Ker}(\mathcal{A})$. Suppose
$\mathcal{A} = \mathcal{A}^2$ and $\mathcal{A}_* =
\mathcal{A}_{*}^2$. Then from Theorem 5.8 in \cite{K:C}, we have
the following decomposition
\[
P_1 = P_1^{\circ} \oplus P_2^{\circ}
\]
on $\text{Ker}(\mathcal{A}) = \text{Ker}(\mathcal{A}_{*}^{\prime})
\oplus \text{Ker}(I-\mathcal{A}_{*}^{\prime})$, where
$P_1^{\circ}$ is a $\mathcal{C}_{00}$-contraction on
$\text{Ker}(\mathcal{A})\cap \text{Ker}(\mathcal{A}_{*})$ and
$P_2^{\circ}$ is a backward unilateral shift on
$\text{Ker}(\mathcal{A})\cap \text{Ker}(I-\mathcal{A}_{*}).$ Using
the similar arguments as in the previous case for the
$\Gamma_n$-contraction $(S_1^*|_{\text{Ker}(\mathcal{A})}, \dots,
S_{(n-1)}^*|_{\text{Ker}(\mathcal{A})},
P^*|_{\text{Ker}(\mathcal{A})})$ on $\text{Ker}(\mathcal{A})$ we
obtain the following decomposition
    \begin{enumerate}
        \item[$(1')$] $\left(S_1|_{\text{Ker}(\mathcal{A})\cap \text{Ker}(\mathcal{A}_{*})},
        \dots, S_{(n-1)}|_{\text{Ker}(\mathcal{A})\cap \text{Ker}(\mathcal{A}_{*})},
        P|_{\text{Ker}(\mathcal{A})\cap \text{Ker}(\mathcal{A}_{*})}\right)$
        is a  $\mathcal{C}_{00}$  $\Gamma_n$-contraction;
        \item[$(2')$] $\left(S_1^*, \dots, S_{(n-1)}^*,P^* \right)$ is a completely
        non-unitary $\Gamma_n$-isom-\\etry on $ \text{Ker}(\mathcal{A})\cap \text{Ker}(I-\mathcal{A}_{*})$.
    \end{enumerate}
    Furthermore, if $\mathcal{A}=\mathcal{A}_{*}$, then by Proposition 3.4
    in
    \cite{K:C}, we have that $\mathcal{A} = \mathcal{A}^2.$ So $\text{Ker}
    (I-\mathcal{A}_{*}) \cap \text{Ker}(\mathcal{A}) = \{0\}$ and $\text{Ker}
    (I-\mathcal{A}) \cap \text{Ker}(\mathcal{A}_{*}) = \{0\}$. Therefore if $\mathcal{A} = \mathcal{A}_{*}$, we have
    $$\mathcal{H} = \text{Ker}(\mathcal{A}) \oplus \text{Ker}(I-\mathcal{A}).$$
    Thus $\big(S_1|_{\text{Ker}(\mathcal{A})}, \dots, S_{(n-1)}|_{\text{Ker}
    (\mathcal{A})},P|_{\text{Ker}(\mathcal{A})}\big)$ is a $\mathcal{C}_{00} \;
    \Gamma_n$-contraction on $\text{Ker}(\mathcal{A})$ and $\big(S_1|_{\text{Ker}(I-\mathcal{A})},
    \dots, S_{(n-1)}|_{\text{Ker}(I-\mathcal{A})},P|_{\text{Ker}(I-\mathcal{A})}\big)$ is a
    $\Gamma_n$-unitary on $\text{Ker}(I-\mathcal{A})$.

\end{proof}

We recall from the literature (see \cite{L:S}) that a contraction
$P$ on a Hilbert space $\mathcal{H}$ induces the orthogonal
decomposition
\[
\mathcal{H} = \text{Ker}(I-P) \oplus \overline{\text{Ran}}(I-P)
\]
where $\text{Ker}(I-P)$ and $\overline{\text{Ran}}(I-P)$ are
reducing subspaces for $P$. Here we produce an analogous
decomposition for a $\Gamma_n$-contraction.
\begin{thm}\label{thm:11}
    Let $\left( S_1, \dots, S_{n-1}, P \right)$ be a $\Gamma_n$-contraction
    on a Hilbert space $\mathcal{H}$. Then
    \begin{enumerate}
        \item $ \overline {\text{Ran}}(I-P)$, $\text{Ker}(I-P)$ reduce $ S_1, \dots, S_{n-1} $;
        \item  $\left(S_1, \dots, S_{n-1}, P \right)$ on $\text{Ker}(I-P)$ is a
        $\Gamma_n$-identity ;
        \item  $\left( S_1, \dots, S_{n-1}, P \right)$
        on $\overline{\text{Ran}}(I-P)$ is a completely non-identity $
        \Gamma_n$-contraction.
    \end{enumerate}
\end{thm}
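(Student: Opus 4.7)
The plan is to exploit the existence and uniqueness of the $\ft$-tuple from Theorem \ref{existence-uniqueness} together with the standard orthogonal decomposition
\[
\mathcal{H}=\text{Ker}(I-P)\oplus\overline{\text{Ran}}(I-P)
\]
associated with any contraction $P$, in which both summands reduce $P$. Write $\mathcal{H}_1=\text{Ker}(I-P)$ and $\mathcal{H}_2=\overline{\text{Ran}}(I-P)$. The initial observation I would use is that for $x\in\mathcal{H}_1$, the equality $Px=x$ forces $P^*Px=x$ via the standard contraction estimate $\|P^*Px-x\|^2\leq\|Px\|^2-\|x\|^2=0$, so $D_Px=0$ and hence $\mathcal{H}_1\subseteq\text{Ker}(D_P)$.

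Next, applying Theorem \ref{existence-uniqueness} I obtain unique operators $A_1,\dots,A_{n-1}\in\mathcal{B}(\mathcal{D}_P)$ with $S_i-S_{n-i}^*P=D_PA_iD_P$ for each $i$. For any $x\in\mathcal{H}_1$ the right-hand side kills $x$, and since $Px=x$ this yields the crucial identity
\[
S_ix=S_{n-i}^*x,\qquad x\in\mathcal{H}_1,\ i=1,\dots,n-1.
\]
Since $(S_1,\dots,S_{n-1},P)$ is commuting, $P(S_ix)=S_iPx=S_ix$, so $S_ix\in\mathcal{H}_1$; that is, $\mathcal{H}_1$ is $S_i$-invariant for every $i$. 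Applying the displayed identity at index $n-i$ in place of $i$ gives $S_{n-i}x=S_i^*x$ for $x\in\mathcal{H}_1$, and since the left-hand side lies in $\mathcal{H}_1$ by the invariance just established, so does $S_i^*x$. Therefore $\mathcal{H}_1$ reduces each $S_i$, and consequently $\mathcal{H}_2=\mathcal{H}_1^\perp$ reduces each $S_i$ as well. This pairing of the fundamental equations at the symmetric indices $i$ and $n-i$ is the main technical step, and I expect it to be the principal point to verify carefully; unlike Theorem \ref{cd1}, here no auxiliary hypothesis on $P^*$--$S_i$ commutation or maximal isometric subspaces is required, because $D_P$ vanishes outright on $\mathcal{H}_1$.

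With reducibility established, conclusion (2) is immediate, since $P|_{\mathcal{H}_1}=I$ makes $\big(S_1|_{\mathcal{H}_1},\dots,S_{n-1}|_{\mathcal{H}_1},P|_{\mathcal{H}_1}\big)$ a $\Gamma_n$-identity. For (3), suppose $\mathcal{M}\subseteq\mathcal{H}_2$ is a nonzero subspace on which $P$ acts as the identity; then $\mathcal{M}\subseteq\text{Ker}(I-P)\cap\mathcal{H}_2=\mathcal{H}_1\cap\mathcal{H}_2=\{0\}$, a contradiction. Hence the restriction to $\mathcal{H}_2$ is a completely non-identity $\Gamma_n$-contraction, finishing the argument.
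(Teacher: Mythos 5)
Your proof is correct, but it takes a genuinely different route from the paper's. The paper's argument is entirely elementary: it uses the standard fact that $\text{Ker}(I-P)=\text{Ker}(I-P^*)$ for a contraction $P$, and then gets invariance of $\text{Ker}(I-P)$ under $S_i$ from $(I-P)S_ix=S_i(I-P)x=0$ and under $S_i^*$ from $(I-P^*)S_i^*x=S_i^*(I-P^*)x=0$; no fundamental operators appear at all. You instead invoke the existence--uniqueness theorem for the $\ft$-tuple, observe that $D_P$ vanishes on $\text{Ker}(I-P)$, and extract the identities $S_ix=S_{n-i}^*x$ on that kernel, pairing the equations at indices $i$ and $n-i$ to obtain $S_i^*$-invariance. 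Both arguments are sound, and your key computation $\|P^*Px-x\|^2\leq \|x\|^2-\|Px\|^2=0$ is essentially the same estimate the paper would use to justify $\text{Ker}(I-P)=\text{Ker}(I-P^*)$; but your route is heavier than necessary, since the adjoint-invariance you labor to establish via the fundamental equations falls out immediately from commutativity once one knows $P^*x=x$ on $\text{Ker}(I-P)$. What your approach buys is an explicit structural identity ($S_i=S_{n-i}^*$ on the identity part), which is more information than the theorem asks for; what it costs is a dependence on Theorem \ref{existence-uniqueness} that the paper's proof avoids. Your handling of conclusions (2) and (3) matches the paper's. One very minor point: you should say explicitly, as the paper does, that the restriction of a $\Gamma_n$-contraction to a joint reducing subspace is again a $\Gamma_n$-contraction, since that is what makes (2) and (3) statements about $\Gamma_n$-contractions rather than just about $P$.
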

\begin{proof}
It is evident that the restriction of a $\Gamma_n$-contraction
$(S_1,\dots, S_{n-1},P)$ to a joint reducing or invariant subspace
of $S_1,\dots, S_{n-1},P$ is also a $\Gamma_n$-contraction.
Therefore, it suffices to prove that $\text{Ker}(I-P)$ is a
reducing subspace for each $S_i$ because then $\overline{
\text{Ran}}(I-P)$ also reduces each $S_i$. One can easily check
that $\text{Ker}(I-P) =
    \text{Ker}(I-P^*)$. Now for any $x \in \text{Ker}(I-P)$ we have
    \[
    (I-P)S_ix = S_i(I-P)x = 0, \text{  for }i=1,\dots,n-1.
    \]
    Again for any $x \in \text{Ker}(I-P) = \text{Ker}(I-P^*)$ we have
    \[
    (I-P^*)S_i^*x = S_i^*(I - P^*)x = 0, \text{  for all }i.
    \]
    Thus $\text{Ker}(I-P)$ is a reducing subspace for each $S_i$. Since $P$ is identity on
    $\text{Ker}(I-P)$ and completely non-identity on
    $\overline{\text{Ran}}(I-P)$, it follows that
    $(S_1,\dots,S_{n-1},P)$ on $\text{Ker}(I-P)$ is
    $\Gamma_n$-identity and on $\overline{\text{Ran}}(I-P)$ is a completely
    non-identity $\Gamma_n$-contraction.
\end{proof}

The following theorem provides a decomposition of a c.n.u
contraction $P$ on a Hilbert space $\mathcal{H}$ but in this case
the orthogonal subspaces may not be reducing subspaces for $P$.
\begin{thm}[{\cite{N:L}, Theorem 3}]\label{NL1}
    For a completely non-unitary contraction $P$ and its adjoint $P^*$
    on $\mathcal{H}$, the following orthogonal decomposition holds,
    \begin{equation}\label{9}
    \mathcal{H} = \mathcal{H}_1^{\prime} \oplus \mathcal{H}_2^{\prime},
    \end{equation}
    where
    \begin{enumerate}
        \item $\mathcal{H}_1^{\prime}$ is maximal subspace of $\mathcal{H}$
        such that $P|_{\mathcal{H}_1^{\prime}}$ is c.n.u isometry;
        \item $P^*|_{\mathcal{H}_2^{\prime}}$ is a c.n.u contraction.
    \end{enumerate}
    Moreover,
    \[
    \lim_{n\rightarrow\infty}\|P^nx\| < \|x\|,  \text{ for all } x\in \mathcal{H}_2^{\prime}.
    \]
    In this case $\mathcal{H}_1^{\prime}$ is invariant subspace for $P$;
    $\mathcal{H}_2^{\prime}$ is invariant subspace for $P^*$.
    The decomposition $\eqref{9}$ is unique.
\end{thm}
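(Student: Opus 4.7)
My plan is to build the decomposition from the strong-operator limit $\mathcal{A} := \lim_{n\to\infty} P^{*n}P^n$, which exists because $\{P^{*n}P^n\}$ is a non-increasing sequence of positive contractions. I will set
\[
\mathcal{H}_1^{\prime} := \text{Ker}(I - \mathcal{A}), \qquad \mathcal{H}_2^{\prime} := \mathcal{H} \ominus \mathcal{H}_1^{\prime},
\]
and claim this is the desired orthogonal decomposition. The first step is to identify $\mathcal{H}_1^{\prime}$ with $\{x \in \mathcal{H} : \|P^n x\| = \|x\| \text{ for all } n \geq 0\}$: the sequence $\|P^n x\|^2 = \langle P^{*n}P^n x, x\rangle$ decreases to $\langle \mathcal{A} x, x\rangle$, and this limit equals $\|x\|^2$ precisely when $\mathcal{A}x = x$ (the nontrivial direction uses that $\langle (I - \mathcal{A})x, x\rangle = 0$ with $I - \mathcal{A} \geq 0$ forces $(I - \mathcal{A})x = 0$).

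Next I would establish the invariance claims via the identity $P^* \mathcal{A} P = \mathcal{A}$, which is immediate from passing to the strong limit in $P^{*(n+1)} P^{n+1} = P^*(P^{*n} P^n)P$. If $x \in \mathcal{H}_1^{\prime}$, then
\[
\langle \mathcal{A}(Px), Px\rangle = \langle P^* \mathcal{A} P x, x\rangle = \langle \mathcal{A} x, x\rangle = \|x\|^2 = \|Px\|^2,
\]
so $Px \in \mathcal{H}_1^{\prime}$; this makes $\mathcal{H}_1^{\prime}$ a $P$-invariant subspace and therefore $\mathcal{H}_2^{\prime}$ a $P^*$-invariant subspace. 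Hence $P|_{\mathcal{H}_1^{\prime}}$ is an isometry -- in particular $P^* P = I$ on $\mathcal{H}_1^{\prime}$, an identity I will use next -- and $P^*|_{\mathcal{H}_2^{\prime}}$ is a contraction.

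The main step, and the principal obstacle, is verifying complete non-unitarity of each piece. I would argue by a lifting trick: if $\mathcal{K} \subseteq \mathcal{H}_1^{\prime}$ reduces $P|_{\mathcal{H}_1^{\prime}}$ and $P|_{\mathcal{K}}$ is unitary, I will show that $\mathcal{K}$ is itself a reducing subspace of $P$ in all of $\mathcal{H}$ on which $P$ is unitary, which forces $\mathcal{K} = \{0\}$ by the c.n.u-ness of the ambient $P$. For the lift, every $k \in \mathcal{K}$ has the form $k = P\ell$ with $\ell \in \mathcal{K}$ by surjectivity, and $P^* k = P^* P \ell = \ell \in \mathcal{K}$ using $P^* P = I$ on $\mathcal{H}_1^{\prime}$, so $\mathcal{K}$ is $P^*$-invariant in $\mathcal{H}$. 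A parallel argument, now using that $PP^* = I$ holds on any unitary reducing piece $\mathcal{L}$ of $P^*|_{\mathcal{H}_2^{\prime}}$ and that surjectivity of $P^*|_{\mathcal{L}}$ lets one rewrite $\ell = P^*\ell'$ to conclude $P\ell = PP^*\ell' = \ell' \in \mathcal{L}$, shows that any such $\mathcal{L}$ lifts to a reducing subspace of $P$ in $\mathcal{H}$ on which $P$ is unitary, hence is trivial.

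The remaining assertions will then follow quickly. For $x \in \mathcal{H}_2^{\prime} \setminus \{0\}$, $x \notin \text{Ker}(I - \mathcal{A})$ gives $\langle (I - \mathcal{A})x, x\rangle > 0$, whence $\lim \|P^n x\|^2 = \langle \mathcal{A} x, x\rangle < \|x\|^2$. Maximality of $\mathcal{H}_1^{\prime}$ is immediate: any subspace on which $P$ acts as an isometry consists of vectors $x$ with $\|P^n x\| = \|x\|$ for all $n$ and hence sits inside $\mathcal{H}_1^{\prime}$. For uniqueness, given any other decomposition $\mathcal{H} = \mathcal{N}_1 \oplus \mathcal{N}_2$ with the stated properties, isometric action of $P$ on $\mathcal{N}_1$ yields $\mathcal{N}_1 \subseteq \mathcal{H}_1^{\prime}$, while the strict norm inequality on $\mathcal{N}_2$ rules out any nonzero vector in $\mathcal{H}_1^{\prime} \cap \mathcal{N}_2$, forcing $\mathcal{N}_1 = \mathcal{H}_1^{\prime}$ and $\mathcal{N}_2 = \mathcal{H}_2^{\prime}$.
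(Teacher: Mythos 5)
The paper does not prove this statement at all: Theorem \ref{NL1} is quoted verbatim from Levan \cite{N:L} as a known background result, so there is no in-paper argument to compare yours against. Your proposal is, as far as I can check, a correct and self-contained proof. The construction via $\mathcal{A}=\operatorname{s-}\lim P^{*n}P^{n}$ and $\mathcal{H}_1^{\prime}=\operatorname{Ker}(I-\mathcal{A})$ recovers exactly the characterization $\mathcal{H}_1^{\prime}=\{h\in\mathcal{H}:\|P^{n}h\|=\|h\| \text{ for } n\geq 1\}$ that the paper itself later imports from \cite{N:L} (see Corollary \ref{cor1} and Theorem \ref{cd4}), and this is also the route of Levan's original argument. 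The two points worth flagging, both of which you handle correctly, are: (i) the passage from $\langle (I-\mathcal{A})x,x\rangle=0$ to $(I-\mathcal{A})x=0$, and from $\|Px\|=\|x\|$ on $\mathcal{H}_1^{\prime}$ to the operator identity $P^{*}Px=x$ for $x\in\mathcal{H}_1^{\prime}$, both of which need positivity of the relevant operator on all of $\mathcal{H}$, not just on the subspace; and (ii) the lifting step for complete non-unitarity, where surjectivity of the unitary restriction is what lets you pull $P^{*}$-invariance (respectively $P$-invariance) back up to the ambient space and contradict the c.n.u.\ hypothesis on $P$. The only cosmetic caveat is that the strict inequality $\lim_{n}\|P^{n}x\|<\|x\|$ on $\mathcal{H}_2^{\prime}$ should be read for nonzero $x$, as you note. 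No gap.
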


\noindent We find an immediate corollary of the previous theorem.

\begin{cor}\label{cor1}
    Let $P$ be a completely non-unitary contraction on a Hilbert space
    $\mathcal{H}$. Let the orthogonal decomposition $\mathcal{H} =
    \mathcal{H}_1^{\prime} \oplus \mathcal{H}_2^{\prime}$ be as in
    Theorem \ref{NL1}. Then $\mathcal{H}_1^{\prime}$ is a reducing subspace for $P^*P$.
\end{cor}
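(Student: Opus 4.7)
The plan is to exploit the two facts about $\mathcal{H}_1'$ given by Theorem \ref{NL1}: that $\mathcal{H}_1'$ is invariant for $P$ and that $P$ restricted to $\mathcal{H}_1'$ is an isometry. Together these force $P^*P$ to act as the identity on $\mathcal{H}_1'$, after which self-adjointness of $P^*P$ yields the reducing property essentially for free.

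First I would take any $x \in \mathcal{H}_1'$. Since $\mathcal{H}_1'$ is invariant under $P$ and $P|_{\mathcal{H}_1'}$ is an isometry, we have $\|Px\|^2 = \|x\|^2$, i.e.\ $\langle P^*Px, x \rangle = \langle x, x \rangle$ for every $x \in \mathcal{H}_1'$. By polarization applied to the self-adjoint operator $P^*P - I$ restricted to $\mathcal{H}_1'$, this yields $P^*P x = x$ for all $x \in \mathcal{H}_1'$. In particular, $P^*P(\mathcal{H}_1') \subseteq \mathcal{H}_1'$, so $\mathcal{H}_1'$ is invariant for $P^*P$.

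Next, since $P^*P$ is self-adjoint, invariance of a closed subspace under $P^*P$ is equivalent to that subspace being reducing for $P^*P$. Indeed, if $y \in \mathcal{H}_2' = (\mathcal{H}_1')^\perp$, then for every $x \in \mathcal{H}_1'$ we have $\langle P^*P y, x\rangle = \langle y, P^*P x\rangle = \langle y, x\rangle = 0$, so $P^*P y \in \mathcal{H}_2'$. Hence $\mathcal{H}_2'$ is also invariant under $P^*P$, which is exactly the statement that $\mathcal{H}_1'$ reduces $P^*P$.

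There is no real obstacle here; the content of the corollary is essentially a restatement of the isometric behaviour of $P|_{\mathcal{H}_1'}$ guaranteed by Theorem \ref{NL1}, combined with the general fact that an invariant subspace for a self-adjoint operator is automatically reducing. The only thing to be careful about is that we use the invariance of $\mathcal{H}_1'$ under $P$ (not just the isometric norm identity) to conclude $P^*Px = x$ from $\|Px\| = \|x\|$ on $\mathcal{H}_1'$; this is what rules out $P^*Px$ having a component outside $\mathcal{H}_1'$.
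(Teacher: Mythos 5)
Your overall strategy is reasonable and the second half of your argument (a closed subspace invariant under the self-adjoint operator $P^*P$ is automatically reducing for it) is fine, but there is a genuine gap at the polarization step. Since you do not yet know that $\mathcal{H}_1'$ is invariant under $P^*P$, the operator $(P^*P-I)|_{\mathcal{H}_1'}$ maps $\mathcal{H}_1'$ into $\mathcal{H}$, not into $\mathcal{H}_1'$; the object to which polarization applies is the \emph{compression} of $P^*P-I$ to $\mathcal{H}_1'$, and the identity $\langle (P^*P-I)x,x\rangle=0$ for all $x\in\mathcal{H}_1'$ yields only that this compression vanishes, i.e.\ that $(P^*P-I)x\in\mathcal{H}_2'$ for every $x\in\mathcal{H}_1'$. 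It does not rule out a nonzero $\mathcal{H}_2'$-component of $P^*Px$. Your closing remark that invariance of $\mathcal{H}_1'$ under $P$ is ``what rules out'' this component is an assertion, not an argument, and it is not the right reason: for the non-contractive operator $P=\bigl(\begin{smallmatrix}1&a\\0&b\end{smallmatrix}\bigr)$ on $\mathbb{C}^2$ with $a\neq 0$, the line $\mathbb{C}\oplus 0$ is $P$-invariant and $P$ restricted to it is isometric, yet $P^*Pe_1=e_1+\bar{a}e_2$. What actually kills the off-diagonal part is contractivity of $P$ on all of $\mathcal{H}$, and this is exactly where the paper's proof does its real work: writing $P=\bigl(\begin{smallmatrix}P_{11}&P_{12}\\0&P_{22}\end{smallmatrix}\bigr)$ with respect to $\mathcal{H}=\mathcal{H}_1'\oplus\mathcal{H}_2'$, it first shows $P_{11}^*P_{11}=I_{\mathcal{H}_1'}$ and then uses $I-P^*P\geq 0$ together with Corollary \ref{cor3.1} (a positive $2\times 2$ operator matrix with vanishing $(1,1)$ entry has vanishing off-diagonal entries) to conclude $P_{11}^*P_{12}=0$.

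The gap is easily repaired without the block machinery: for $x\in\mathcal{H}_1'$ one has $\|P^*Px-x\|^2=\|P^*Px\|^2-2\operatorname{Re}\langle P^*Px,x\rangle+\|x\|^2=\|P^*Px\|^2-2\|Px\|^2+\|x\|^2=\|P^*Px\|^2-\|x\|^2\leq \|Px\|^2-\|x\|^2=0$, using that $P^*$ is a contraction; hence $P^*Px=x$ and the rest of your argument goes through. But as written, your proposal does not establish the key identity $P^*Px=x$ on $\mathcal{H}_1'$.
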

\begin{proof}
    With respect to the decomposition $\mathcal H=\mathcal H_1^{\prime}\oplus
    \mathcal H_2^{\prime}$, let
    \[
    P=
    \begin{bmatrix}
    P_{11}&P_{12}\\
    0&P_{22}
    \end{bmatrix}
    \]
    so that $P_{11}$ is a completely non-unitary isometry
    on $\mathcal{H}_1^{\prime}$. It is known from \cite{N:L} that
    \[
    \mathcal{H}_1' = \left\{ h\in \mathcal{H}:\|P^nh\|=\|h\| \text{ for }n\geq 1 \right\}.
    \]
    Then for any $h \in \mathcal{H}_1'$\,,
    \begin{align*}
        \|P_{11}^*P_{11}h - h\|^2 & = \langle P_{11}^*P_{11}h - h, P_{11}^*P_{11}h - h \rangle \\
        & = \langle P_{11}^*P_{11}h, P_{11}^*P_{11}h \rangle -
        \langle P_{11}^*P_{11}h, h \rangle - \langle h, P_{11}^*P_{11}h \rangle + \|h\|^2 \\
        & = \langle P_{11}^*P_{11}h, P_{11}^*P_{11}h \rangle -
        \langle P_{11}h, P_{11}h \rangle - \langle P_{11}h, P_{11}h \rangle + \|h\|^2 \\
        & = \langle P_{11}^*P_{11}h, P_{11}^*P_{11}h \rangle
        - \langle Ph, Ph \rangle - \langle Ph, Ph \rangle + \|h\|^2 \\
        & = \|P_{11}^*P_{11}h\|^2 - \|h\|^2 \leq 0\,.
    \end{align*}
    Therefore, $P_{11}^*P_{11} = I_{\mathcal{H}_1'}$.
    Since $P$ is a contraction so $I - P^*P \geq 0$. Now
    \[
    I - P^*P =
    \begin{bmatrix}
    0 & -P_{11}^*P_{12} \\
    -P_{12}^*P_{11} & I - P_{12}^*P_{12} - P_{22}^*P_{22}
    \end{bmatrix}
    \geq 0\,.
    \]
    Then by Corollary \ref{cor3.1} we have $P_{11}^*P_{12} = 0$ and hence
    \[
    P^*P =
    \begin{bmatrix}
    P_{11}^*P_{11} & 0 \\
    0 & P_{12}^*P_{12} + P_{22}^*P_{22}
    \end{bmatrix}.
    \]
    Therefore, $\mathcal{H}_1'$ is a reducing subspace for $P^*P$.
\end{proof}

We now present here an analogue of the Theorem \ref{NL1} for a
particular class of c.n.u $\Gamma_n$-contractions.
\begin{thm}\label{cd4}
    Let $(S_1, \dots, S_{n-1},P)$ be a c.n.u $\Gamma_n$-contraction
    on $\mathcal{H}$ such that $P^*$ commutes with each $S_i$ and let
    $\mathcal{H}_1^{\prime}\,,\,\mathcal{H}_2^{\prime}$ be as in Theorem \ref{NL1}.
    Then with respect to the decomposition
    \[
    \mathcal{H} = \mathcal{H}_1^{\prime}\oplus\mathcal{H}_2^{\prime},
    \]
    the following are true:
    \begin{enumerate}
        \item $\mathcal{H}_1^{\prime}$ is a maximal joint invariant
        subspace for $S_1, \dots, S_{n-1} \text{ and } P$ such that
        $\left(S_1|_{\mathcal{H}_1^{\prime}},\dots, S_{n-1}
        |_{\mathcal{H}_1^{\prime}}, P|_{\mathcal{H}_1^{\prime}}\right)$
        is c.n.u $\Gamma_n$-isometry;
        \item $\mathcal{H}_2^{\prime}$ is a maximal joint
        invariant subspace for $S_1^*, \dots, S_{n-1}^* \text{ and }
        P^*$ such that $\left(S_1^*, \dots, S_{n-1}^*,P^* \right)$
        on ${\mathcal{H}_2^{\prime}}$ is c.n.u $\Gamma_n$-contraction.
    \end{enumerate}
    The above decomposition is unique.
\end{thm}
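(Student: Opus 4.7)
The plan is to leverage the characterization coming from Theorem \ref{NL1} that
\[
\mathcal H_1' \;=\; \{h\in\mathcal H\,:\, \|P^n h\|=\|h\| \text{ for all } n\geq 1\} \;=\; \bigcap_{n\geq 1}\ker(I-P^{*n}P^n),
\]
and to use the hypothesis $P^*S_i=S_iP^*$ (together with the already-given commutation $PS_i=S_iP$) to show that each $S_i$ preserves $\mathcal H_1'$. Concretely, from $PS_i=S_iP$ and $P^*S_i=S_iP^*$ it follows that for every $n\geq 1$,
\[
P^{*n}P^n S_i \;=\; S_i P^{*n}P^n.
\]
Therefore, for any $h\in\mathcal H_1'$, $(I-P^{*n}P^n)S_ih = S_i(I-P^{*n}P^n)h = 0$, i.e.\ $\|P^nS_ih\|=\|S_ih\|$ for every $n\geq 1$. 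Hence $S_ih\in\mathcal H_1'$, which proves that $\mathcal H_1'$ is a joint invariant subspace for $S_1,\dots,S_{n-1}$ (it is already $P$-invariant by Theorem \ref{NL1}). Taking orthogonal complements gives at once that $\mathcal H_2'=\mathcal H\ominus\mathcal H_1'$ is invariant under $S_1^*,\dots,S_{n-1}^*$ (and under $P^*$ by Theorem \ref{NL1}).

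Once invariance is in hand, the remaining pieces are essentially formal. The restriction of the $\Gamma_n$-contraction $(S_1,\dots,S_{n-1},P)$ to the joint invariant subspace $\mathcal H_1'$ is again a $\Gamma_n$-contraction (von Neumann inequality in Lemma \ref{SP1} passes to invariant subspaces). Since $P|_{\mathcal H_1'}$ is an isometry, Theorem \ref{gammaiso} promotes the restriction to a $\Gamma_n$-isometry; and by the defining property of $\mathcal H_1'$ in Theorem \ref{NL1}, $P|_{\mathcal H_1'}$ is c.n.u, so $(S_1|_{\mathcal H_1'},\dots,S_{n-1}|_{\mathcal H_1'},P|_{\mathcal H_1'})$ is a c.n.u $\Gamma_n$-isometry. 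In the same way, $(S_1^*,\dots,S_{n-1}^*,P^*)$ is a $\Gamma_n$-contraction on $\mathcal H$ whose restriction to $\mathcal H_2'$ is a $\Gamma_n$-contraction, and by Theorem \ref{NL1}, $P^*|_{\mathcal H_2'}$ is a c.n.u contraction, giving (2).

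Maximality and uniqueness are inherited from Theorem \ref{NL1}: if $\mathcal M\subseteq\mathcal H$ is any joint invariant subspace for $S_1,\dots,S_{n-1},P$ on which the restricted tuple is a c.n.u $\Gamma_n$-isometry, then in particular $P|_{\mathcal M}$ is a c.n.u isometry, so $\mathcal M\subseteq \mathcal H_1'$ by the maximality clause of Theorem \ref{NL1}; symmetrically for $\mathcal H_2'$. Uniqueness of the orthogonal splitting follows directly from the uniqueness statement in Theorem \ref{NL1}.

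The only substantive obstacle is the invariance of $\mathcal H_1'$ under each $S_i$, and the hypothesis that $P^*$ commutes with the $S_i$'s is precisely what makes the operator $P^{*n}P^n$ commute with $S_i$ and so carries $\mathcal H_1'=\bigcap_n \ker(I-P^{*n}P^n)$ into itself. Everything else then flows from the already-established theorems (\ref{NL1}, \ref{gammaiso}, \ref{SP1}) once this invariance is secured.
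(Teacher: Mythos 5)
Your proposal is correct and follows essentially the same route as the paper: both arguments hinge on the identity $P^{*n}P^nS_i=S_iP^{*n}P^n$ (from $PS_i=S_iP$ and $P^*S_i=S_iP^*$) to show that $\mathcal H_1'=\{h:\|P^nh\|=\|h\|\ \forall n\}$ is invariant under each $S_i$, after which the restrictions are $\Gamma_n$-contractions and Theorem \ref{gammaiso} upgrades the first piece to a c.n.u $\Gamma_n$-isometry. If anything, you spell out the maximality clause more explicitly than the paper does, which simply asserts uniqueness at the end.
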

\begin{proof}
    By virtue of the Theorem \ref{NL1} we know that,
    \[
    \mathcal{H}_1^{\prime} = \left\{h\in \mathcal{H} : \|P^nh\| = \|h\| \text{ for } n\geq 1\right\}.
    \]
    Then for any $h \in \mathcal{H}_1'$\,,
    \begin{align*}
    \|P^{*n}P^nh - h\|^2 & = \langle P^{*n}P^nh - h, P^{*n}P^nh - h \rangle \\
    & = \langle P^{*n}P^nh, P^{*n}P^nh \rangle -
    \langle P^{*n}P^nh, h \rangle - \langle h, P^{*n}P^nh \rangle + \|h\|^2 \\
    & = \|P^{*n}P^nh\|^2 - \|h\|^2 \leq 0\,.
    \end{align*}
    This implies that $P^{*n}P^nh = h$ for any $n$.
    Since for any $i= 1,\dots, n-1$, $S_i^*P = PS_i^*$
    so it is clear that for any $h\in \mathcal{H}_1^{\prime}$ we have
    \[
        \|P^nS_ih\|^2  = \langle P^{*n}P^nS_ih, S_ih \rangle
        = \langle S_iP^{*n}P^nh, S_ih \rangle = \langle S_ih, S_ih \rangle
        = \|S_ih\|^2.
    \]
    Therefore, $\mathcal{H}_1^{\prime}$ is invariant subspace for $S_i$ and
    similarly $\mathcal{H}_2^{\prime}$ is invariant subspace for $S_i^*$ for any
    $i = 1, \dots, n-1$. Therefore, both $\left(S_1|_{\mathcal{H}_1^{\prime}},
    \dots, S_{n-1}|_{\mathcal{H}_1^{\prime}},
    P|_{\mathcal{H}_1^{\prime}}\right)$ and $\big(S_1^*|_{\mathcal{H}_2^{\prime}},\dots, S_{n-1}^*|_{\mathcal{H}_2^{\prime}},
    P^*|_{\mathcal{H}_2^{\prime}}\big)$ are
    $\Gamma_n$-contractions of which $P|_{\mathcal{H}_1^{\prime}}$
    is a c.n.u isometry and $P^*|_{\mathcal{H}_2^{\prime}}$ is a
    c.n.u contraction. Hence $\left(S_1|_{\mathcal{H}_1^{\prime}},
    \dots, S_{n-1}|_{\mathcal{H}_1^{\prime}},
    P|_{\mathcal{H}_1^{\prime}}\right)$ is a c.n.u
    $\Gamma_n$-isometry and $\big(S_1^*|_{\mathcal{H}_2^{\prime}},\dots, S_{n-1}^*|_{\mathcal{H}_2^{\prime}},
    P^*|_{\mathcal{H}_2^{\prime}}\big)$ is a c.n.u
    $\Gamma_n$-contraction. Obviously this decomposition is
    unique.

\end{proof}

\noindent \textbf{Note 1.} If $\mathcal{H}_1^{\prime} \neq \{0\}$,
then dimension of $\mathcal{H}_1^{\prime}$ can not be finite.
Otherwise, $\mathcal{H}_1^{\prime}$ will be a reducing subspace
for $P$ on which $P$ is unitary.\\

\noindent \textbf{Note 2.} Like Theorem \ref{mainthm}, if we drop the condition that $P^*$ commutes with each $S_i$ from hypothesis, we may not achieve the desired decomposition. The same example (Example \ref{example1}) can be referred to as a counter example here and it can be easily verified.\\

A direct consequence of the Theorem \ref{NL1} is the following
theorem of one-variable operator theory.
\begin{thm}\label{NL2}
    Let $P$ be a completely non-unitary contraction operator on
    $\mathcal{H}$, then $\mathcal{H}$ admits the unique orthogonal decomposition
    \[
    \mathcal{H} = \mathcal{H}_1^{\circ} \oplus \mathcal{H}_2^{\circ} \oplus \mathcal{H}_3^{\circ},
    \]
    where $\mathcal{H}_1^{\circ}$ and $\mathcal{H}_3^{\circ}$
    are invariant for $P$ while $\mathcal{H}_2^{\circ}$ is invariant for $P^*$. Moreover,
    \begingroup
    \allowdisplaybreaks
    \begin{align*}
    &\lim\limits_{n\rightarrow\infty}\|P^nx\| = 0,  \text{ for each }  x\in \mathcal{H}_1^{\circ},\\
    &\lim\limits_{n\rightarrow\infty}\|P^nx\| < \|x\|,  \text{ for each }  x\in \mathcal{H}_2^{\circ}
    \end{align*}
    \endgroup
    and
    \[
    \lim\limits_{n\rightarrow\infty}\|P^nx\| = \|x\|,  \text{ for each }  x\in \mathcal{H}_3^{\circ}.
    \]
\end{thm}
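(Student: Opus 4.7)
The plan is to peel off the three summands one at a time, using Theorem \ref{NL1} for the isometric piece and the asymptotic operator $A$ of Sz.-Nagy/Foguel for the strongly stable piece. First I apply Theorem \ref{NL1} to the c.n.u.\ contraction $P$ to obtain $\mathcal{H}=\mathcal{H}_1'\oplus\mathcal{H}_2'$, where $\mathcal{H}_1'$ is $P$-invariant with $P|_{\mathcal{H}_1'}$ a c.n.u.\ isometry (so $\|P^nx\|=\|x\|$ for all $x\in\mathcal{H}_1'$ and $n\geq 1$), while $\mathcal{H}_2'$ is $P^*$-invariant and satisfies $\lim_n\|P^nx\|<\|x\|$ for every non-zero $x\in\mathcal{H}_2'$. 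Setting $\mathcal{H}_3^{\circ}:=\mathcal{H}_1'$ already produces the third summand with its invariance and norm properties.

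For the strongly stable part, I introduce the strong operator limit $A:=\lim_n P^{*n}P^n$, which exists because $\{P^{*n}P^n\}$ is a bounded, monotone non-increasing sequence of positive operators, and which satisfies $0\leq A\leq I$. Passing the identity $P^{*(n+1)}P^{n+1}=P^*(P^{*n}P^n)P$ to the strong limit yields $P^*AP=A$, and therefore $\|A^{1/2}Px\|=\|A^{1/2}x\|$; in particular $A^{1/2}x=0$ forces $A^{1/2}Px=0$, so $\text{Ker}(A)$ is $P$-invariant. Since $\|P^nx\|^2=\langle P^{*n}P^nx,x\rangle\to\langle Ax,x\rangle$, I define $\mathcal{H}_1^{\circ}:=\text{Ker}(A)=\{x\in\mathcal{H}:\lim_n\|P^nx\|=0\}$, which carries the desired $P$-invariance and norm behaviour.

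The crux is the orthogonality $\mathcal{H}_1^{\circ}\perp\mathcal{H}_3^{\circ}$. For $x\in\mathcal{H}_3^{\circ}=\mathcal{H}_1'$, the equality $\|P^nx\|=\|x\|$ gives $\langle(I-P^{*n}P^n)x,x\rangle=0$, and positivity of $I-P^{*n}P^n$ forces $P^{*n}P^nx=x$, whence $Ax=x$. Conversely, if $Ax=x$ then the monotone non-increasing sequence $\|P^nx\|$ has limit $\|x\|=\|P^0x\|$, so $\|P^nx\|=\|x\|$ for every $n$, placing $x$ in $\mathcal{H}_3^{\circ}$. Hence $\mathcal{H}_3^{\circ}=\text{Ker}(I-A)$, and self-adjointness of $A$ gives $\text{Ker}(A)\perp\text{Ker}(I-A)$ (they correspond to the distinct spectral values $0$ and $1$), yielding $\mathcal{H}_1^{\circ}\perp\mathcal{H}_3^{\circ}$.

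Finally I set $\mathcal{H}_2^{\circ}:=\mathcal{H}\ominus(\mathcal{H}_1^{\circ}\oplus\mathcal{H}_3^{\circ})$, which provides the required orthogonal decomposition. Since both $\mathcal{H}_1^{\circ}$ and $\mathcal{H}_3^{\circ}$ are $P$-invariant, their direct sum is $P$-invariant and consequently $\mathcal{H}_2^{\circ}$ is $P^*$-invariant. For any non-zero $x\in\mathcal{H}_2^{\circ}$, $x\perp\text{Ker}(I-A)$ implies $(I-A)^{1/2}x\neq 0$ and so $\langle(I-A)x,x\rangle>0$, giving $\lim_n\|P^nx\|^2=\langle Ax,x\rangle<\|x\|^2$. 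Uniqueness is immediate from the intrinsic characterisations $\mathcal{H}_1^{\circ}=\text{Ker}(A)$ and $\mathcal{H}_3^{\circ}=\text{Ker}(I-A)$. The main subtlety is identifying $\mathcal{H}_3^{\circ}$ with $\text{Ker}(I-A)$ and invoking spectral-theoretic orthogonality; the invariance assertions become routine once $A$ is in play.
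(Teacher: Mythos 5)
Your construction is correct and is essentially the one the paper intends: Theorem \ref{NL2} is stated without proof as a direct consequence of Theorem \ref{NL1}, and in the proof of Theorem \ref{cd5} the paper identifies $\mathcal{H}_3^{\circ}$ with $\mathcal{H}_1^{\prime}$ and $\mathcal{H}_1^{\circ}$ with $M(P)=\{x:\|P^nx\|\to 0\}$, taking $\mathcal{H}_2^{\circ}$ to be the remaining orthogonal complement --- precisely your $\text{Ker}(A)$, $\text{Ker}(I-A)$ and $\mathcal{H}\ominus\bigl(\text{Ker}(A)\oplus\text{Ker}(I-A)\bigr)$, so your argument simply supplies the details the paper omits. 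The one soft spot is the closing claim that uniqueness is ``immediate'': the properties as listed do not by themselves force $K_1=\text{Ker}(A)$ and $K_3=\text{Ker}(I-A)$ for a competing decomposition (e.g.\ for a strongly stable $P$ one may put all of $\mathcal{H}$ into the middle summand and still satisfy every displayed condition), so uniqueness requires reading $\mathcal{H}_1^{\circ}$ and $\mathcal{H}_3^{\circ}$ as the \emph{maximal} subspaces with their respective norm behaviour, as in Levan's original formulation; with that (standard) reading your identifications do yield uniqueness, and this imprecision is inherited from the statement rather than introduced by your proof.
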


\noindent The following theorem provides an analogue of Theorem
\ref{NL2} for $\Gamma_n$-contractions.

\begin{thm}\label{cd5}
    Let $(S_1,\dots, S_{n-1}, P)$ be a c.n.u
    $\Gamma_n$-contraction on $\mathcal{H}$ such that $P^*$
    commutes with each $S_I$. Let $\mathcal{H}_1^{\circ}\,,\, \mathcal{H}_2^{\circ} \,,\,
    \mathcal{H}_3^{\circ}$ be as in Theorem \ref{NL2}. Then with
    respect to the orthogonal decomposition
    \[
    \mathcal{H} = \mathcal{H}_1^{\circ} \oplus \mathcal{H}_2^{\circ} \oplus \mathcal{H}_3^{\circ},
    \]
    the following statements hold:
    \begin{enumerate}
        \item $\mathcal{H}_1^{\circ}$ and $\mathcal{H}_3^{\circ}$ are
        joint invariant subspaces for $S_1,\dots, S_{n-1} \text{ and } P$;
        \item $\mathcal{H}_2^{\circ}$ is a joint invariant subspace for
        $S_1^*,\dots, S_{n-1}^* \text{ and } P^*$\,;
        \item $\left( S_1, \dots, S_{n-1}, P \right)$ on
        ${\mathcal{H}_1^{\circ}}$ is a strongly stable $\Gamma_n$-contraction;
        \item $\left( S_1^*|_{\mathcal{H}_2^{\circ}}, \dots, S_{n-1}^*
        |_{\mathcal{H}_2^{\circ}}, P^*|_{\mathcal{H}_2^{\circ}} \right)$
        is a c.n.u $\Gamma_n$-contraction;
        \item $\left( S_1|_{\mathcal{H}_3^{\circ}}, \dots, S_{n-1}
        |_{\mathcal{H}_3^{\circ}}, P|_{\mathcal{H}_3^{\circ}} \right)$ is a c.n.u $\Gamma_n$-isometry.
    \end{enumerate}
\end{thm}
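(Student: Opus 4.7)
The plan is to follow the pattern of Theorems \ref{cd1} and \ref{cd4}, exploiting the asymptotic characterizations of $\mathcal{H}_1^{\circ}$ and $\mathcal{H}_3^{\circ}$ from Theorem \ref{NL2} together with the commutation hypothesis $S_iP^*=P^*S_i$. The key preliminary observation is that this hypothesis, combined with $S_iP=PS_i$ from the commuting tuple, forces each $S_i$ (and each $S_i^*$) to commute with $P^{*n}P^n$ for every $n\geq 1$.

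First I would verify that $\mathcal{H}_1^{\circ}$ is $S_i$-invariant: for $x\in \mathcal{H}_1^{\circ}$,
\[
\|P^nS_i x\|^2 \;=\; \langle P^{*n}P^n S_i x, S_i x\rangle \;=\; \langle S_i P^{*n}P^n x, S_i x\rangle \;=\; \langle P^n x,\, P^n S_i^*S_i x\rangle,
\]
which is bounded by $\|P^n x\|\,\|S_i^*S_i x\|\to 0$ (using that $P$ is a contraction). For $x\in \mathcal{H}_3^{\circ}$, since $\|P^n x\|$ is non-increasing and converges to $\|x\|$, one has $\|P^n x\|=\|x\|$ for every $n$ and hence $P^{*n}P^n x=x$; consequently $\|P^nS_i x\|^2=\|S_i x\|^2$, placing $S_i x$ back in $\mathcal{H}_3^{\circ}$. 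Combined with the $P$-invariance of $\mathcal{H}_1^{\circ}$ and $\mathcal{H}_3^{\circ}$ recorded in Theorem \ref{NL2}, this establishes (1). Statement (2) is then immediate: $\mathcal{H}_2^{\circ}=(\mathcal{H}_1^{\circ}\oplus \mathcal{H}_3^{\circ})^{\perp}$ is the orthogonal complement of a joint invariant subspace of $\{S_1,\dots,S_{n-1},P\}$, hence joint invariant for $\{S_1^*,\dots,S_{n-1}^*,P^*\}$.

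For the structural statements (3)--(5), I would invoke the fact that the restriction of a $\Gamma_n$-contraction to a joint invariant subspace is again a $\Gamma_n$-contraction, a direct consequence of Lemma \ref{SP1} and the polynomial convexity of $\Gamma_n$ (since $p(T_i|_{\mathcal M})=p(T_i)|_{\mathcal M}$ for any invariant $\mathcal M$). On $\mathcal{H}_1^{\circ}$ the component $P$ is strongly stable, so the restricted tuple is a strongly stable $\Gamma_n$-contraction. On $\mathcal{H}_2^{\circ}$ the adjoint tuple restricts to a $\Gamma_n$-contraction whose last component $P^*|_{\mathcal{H}_2^{\circ}}$ is a c.n.u contraction, yielding a c.n.u $\Gamma_n$-contraction. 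On $\mathcal{H}_3^{\circ}$, $P|_{\mathcal{H}_3^{\circ}}$ is a c.n.u isometry, and Theorem \ref{gammaiso} upgrades the restricted tuple to a c.n.u $\Gamma_n$-isometry.

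The main obstacle is that Theorem \ref{NL2} provides no explicit membership description of the intermediate subspace $\mathcal{H}_2^{\circ}$---only the strict inequality $\lim\|P^n x\|<\|x\|$---so a direct verification of its $S_i^*$-invariance by norm identities would be awkward. I would circumvent this by treating $\mathcal{H}_1^{\circ}$ and $\mathcal{H}_3^{\circ}$ first and then passing to the orthogonal complement. The hypothesis $S_iP^*=P^*S_i$ is exactly what enables the norm computations above, and its indispensability is already witnessed by Example \ref{example1} for the analogous Theorems \ref{cd1} and \ref{cd4}.
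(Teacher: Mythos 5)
Your proposal is correct and follows essentially the same route as the paper: identify $\mathcal{H}_1^{\circ}$ with $\{x:\|P^nx\|\to 0\}$ and $\mathcal{H}_3^{\circ}$ with $\mathcal{H}_1^{\prime}$, verify their joint invariance under $S_1,\dots,S_{n-1},P$ by the norm computations (the paper uses the even simpler estimate $\|P^nS_ix\|=\|S_iP^nx\|\le\|S_i\|\,\|P^nx\|$ on $\mathcal{H}_1^{\circ}$, needing no adjoint commutation there), obtain the $S_i^*$-invariance of $\mathcal{H}_2^{\circ}$ by orthocomplementation, and conclude via the restriction property of $\Gamma_n$-contractions together with Theorem \ref{gammaiso}. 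No gaps.
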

\begin{proof}
    The subspace $\mathcal{H}_3^{\circ}$ is actually $\mathcal{H}_1^{\prime}$,
    while $\mathcal{H}_1^{\circ}$ is taken to be the subspace
    \[
    M(P) = \{x\in \mathcal{H} :  \|P^nx\| \rightarrow 0, n\rightarrow\infty\}.
    \]
    Now it is clear that $M(P)\subset \mathcal{H}_2^{\prime}$. Then
    $\mathcal{H}_2^{\circ}$ must be the orthogonal complement of
    $M(P)$ in $\mathcal{H}_2^{\prime}$. Suppose $x \in M(P)$. Then
    \[
    \|P^nS_ix\| =\|S_iP^nx \|\leq \|S_i\|\|P^nx\| \rightarrow 0, \text{ as } n \rightarrow \infty \,.
    \]
    Therefore, $M(P)$ is a joint invariant subspace for $S_1,\dots,
    S_{n-1} \text{ and } P$ and consequently, $\mathcal{H}_2^{\circ}$
    is a joint invariant subspace for $S_1^*,\dots, S_{n-1}^*$ and
    $P^*$. Then by using Theorem \ref{SP1} we have that $(S_1, \dots, S_{n-1},P)$
    on $\mathcal{H}_3^{\circ}$, $(S_1, \dots, S_{n-1}, P)$ on
    $\mathcal{H}_1^{\circ}$ and $(S_1^*, \dots, S_{n-1}^*,P^*)$
    on $\mathcal{H}_2^{\circ}$ are $\Gamma_n$-contractions. Since
    $P$ on $\mathcal{H}_1^{\circ}$ is strongly stable so
    $(S_1, \dots, S_{n-1}, P)$ on $\mathcal{H}_1^{\circ}$
    is a strongly stable $\Gamma_n$-contraction. Again $P$
    on $\mathcal{H}_3^{\circ}$ is an isometry. Thus
    $(S_1|_{\mathcal{H}_3^{\circ}}, \dots, S_{n-1}|_{\mathcal{H}_3^{\circ}},
    P|_{\mathcal{H}_3^{\circ}})$ is a c.n.u $\Gamma_n$-isometry.
\end{proof}

\noindent \textbf{Note.} The condition that $P^*$ commutes with
$S_1,\dots,S_{n-1}$ in the hypothesis cannot be ignored. An explanation is given in Note 2 after Theorem \ref{cd4}.\\

The following decomposition of a contraction was found by Foguel.
Since a bilateral shift is a weakly stable unitary operator, the
decomposition is not unique.
\begin{thm}[{\cite{Foguel}, Theorem 1.1}]\label{Foguel}
    Let $P$ be a contraction on a Hilbert space $\mathcal{H}$ and set
    $$\mathcal{E} = \left\{x \in \mathcal{H}:\langle P^nx, y \rangle
    \rightarrow 0 \text{ as } n \rightarrow \infty, \text{ for all } y \in \mathcal{H} \right\}.$$
    Then $\mathcal{E}$ is a reducing subspace for $P$. Moreover, the decomposition
    $$P = Z \oplus U$$
    on $\mathcal{H} = \mathcal{E} \oplus \mathcal{E}^{\perp}$ is such that
    $Z = P|_{\mathcal{E}}$ is weakly stable contraction and $U = P|_{\mathcal{E}^{\perp}}$ is unitary.
\end{thm}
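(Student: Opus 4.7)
The plan is to prove, in order, that $\mathcal{E}$ is a closed linear subspace, that $P\mathcal{E}\subseteq\mathcal{E}$, that $P^*\mathcal{E}\subseteq\mathcal{E}$, and finally that $P|_{\mathcal{E}^\perp}$ is unitary. The first two facts are immediate: linearity of the inner product combined with a standard $\varepsilon/2$ argument (exploiting $\|P^n\|\leq 1$) shows $\mathcal{E}$ is closed, and $P(\mathcal{E})\subseteq\mathcal{E}$ follows from $\langle P^n(Px),y\rangle=\langle P^{n+1}x,y\rangle\to 0$ whenever $x\in\mathcal{E}$.

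The first substantive step is $P^*(\mathcal{E})\subseteq\mathcal{E}$. Fix $x\in\mathcal{E}$ and $y\in\mathcal{H}$ and write
\[
\langle P^n P^*x,\, y\rangle \;=\; \langle x,\, P^{*(n+1)}y\rangle.
\]
I argue by contradiction: if this scalar sequence does not tend to $0$, some subsequence approaches a nonzero limit $c$. The bounded sequence $\{P^{*(n+1)}y\}$ has, by Banach--Alaoglu, a weakly convergent subsequence, so after passing to a further subsequence we may assume $P^{*(n_k+1)}y\to z$ weakly for some $z$ with $\langle x,z\rangle=c$. On the other hand,
\[
\langle x,z\rangle \;=\; \lim_{k}\langle x, P^{*(n_k+1)}y\rangle \;=\; \lim_{k}\langle P^{n_k+1}x,y\rangle \;=\; 0,
\]
the last equality because $x\in\mathcal{E}$. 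This contradiction forces $\langle P^n P^*x,y\rangle\to 0$, hence $P^*x\in\mathcal{E}$. Therefore $\mathcal{E}$ reduces $P$, and $Z=P|_{\mathcal{E}}$ is a weakly stable contraction by construction.

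For the unitarity of $U=P|_{\mathcal{E}^\perp}$, I apply the Sz.-Nagy--Foias canonical decomposition (Theorem \ref{thm:candecomp}) to $U$ itself, writing $\mathcal{E}^\perp=\mathcal{K}_u\oplus\mathcal{K}_{cnu}$ with $U|_{\mathcal{K}_u}$ unitary and $U|_{\mathcal{K}_{cnu}}$ a c.n.u.\ contraction. It suffices to show $\mathcal{K}_{cnu}=\{0\}$. For this I invoke the classical fact (a standard consequence of Sz.-Nagy--Foias dilation theory) that the powers of any c.n.u.\ contraction tend to $0$ in the weak operator topology. Then for $v\in\mathcal{K}_{cnu}$ and $y\in\mathcal{E}^\perp$ one has $\langle P^n v,y\rangle=\langle U^n v,y\rangle\to 0$, while for $y\in\mathcal{E}$ one has $\langle P^n v,y\rangle=0$ because $\mathcal{E}^\perp$ reduces $P$. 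Combining, $v\in\mathcal{E}\cap\mathcal{E}^\perp=\{0\}$, and $U$ is unitary.

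I expect the third step to be the main obstacle: the identification $\mathcal{K}_{cnu}=\{0\}$ is morally equivalent to the statement that every c.n.u.\ contraction is weakly stable, which is the conceptual heart of Foguel's theorem; by contrast, the weak-compactness trick used for $P^*$-invariance of $\mathcal{E}$ is pleasant but largely routine.
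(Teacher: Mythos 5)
The paper does not prove this statement: Theorem \ref{Foguel} is quoted verbatim from \cite{Foguel} as a known result, so there is no in-paper argument to compare yours against and I can only judge the proposal on its own terms. The easy parts (closedness, $P$-invariance, the final identification of $\mathcal{K}_{cnu}$ with $\{0\}$ granted that $\mathcal{E}$ reduces $P$ and that c.n.u.\ contractions are weakly stable) are fine, though for the last of these a referee would want a precise citation (absolute continuity of the minimal unitary dilation of a c.n.u.\ contraction, \cite{Nagy}, plus Riemann--Lebesgue) rather than ``a standard consequence''.

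The genuine gap is in the step you call routine, namely $P^*\mathcal{E}\subseteq\mathcal{E}$. The identity $\langle P^nP^*x,y\rangle=\langle x,P^{*(n+1)}y\rangle$ is false: $(P^nP^*)^*=PP^{*n}$, not $P^{*(n+1)}$, so the correct identity is $\langle P^nP^*x,y\rangle=\langle x,PP^{*n}y\rangle$. (A warning sign you should have caught: if your identity held, then $\langle P^nP^*x,y\rangle=\langle P^{n+1}x,y\rangle\to 0$ outright and the entire Banach--Alaoglu detour would be pointless.) With the correct identity the compactness argument does not close. Passing to a weak limit $P^{*n_k}y\rightharpoonup w$, the hypothesis $x\in\mathcal{E}$ yields only $\langle x,w\rangle=0$ (indeed $w\perp\mathcal{E}$), whereas you need $\langle x,Pw\rangle=\langle P^*x,w\rangle=0$; knowing that $P$ maps $\mathcal{E}^{\perp}$ into itself is precisely the statement under proof, so the argument is circular. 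The step is repairable by an elementary defect-operator estimate: with $D_{P^*}=(I-PP^*)^{1/2}$, write $\langle P^nP^*x,y\rangle=\langle P^{n-1}x,y\rangle-\langle D_{P^*}x,\,D_{P^*}P^{*(n-1)}y\rangle$; the first term tends to $0$ since $x\in\mathcal{E}$, and by Cauchy--Schwarz the second is dominated by $\|D_{P^*}x\|\bigl(\|P^{*(n-1)}y\|^2-\|P^{*n}y\|^2\bigr)^{1/2}$, which tends to $0$ because $\{\|P^{*n}y\|\}$ is non-increasing. Alternatively one can invoke the equality $\mathcal{E}=\mathcal{E}_*$ of Theorem 7.3 in \cite{K:C}, which is exactly the tool the authors themselves use in the proof of Theorem \ref{Foguel1}.
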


\noindent A $\Gamma_n$-contraction $(S_1,\dots ,S_{n-1},P)$ admits
an analogous decomposition which is presented in the following
theorem.
 \begin{thm}\label{Foguel1}
        Let $\big( S_1, \dots, S_{n-1}, P \big)$ be a
        $\Gamma_n$-contraction on a Hilbert space $\mathcal{H}$ and let $\mathcal{E}$ and $\mathcal{E}^{\perp}$ . Then
    \begin{enumerate}
        \item $\mathcal{E}$ and $\mathcal{E}^{\perp}$ reduce $S_1, \dots, S_{n-1}$,
        \item $\left( S_1|_{\mathcal{E}}, \dots, S_{(n-1)}|_{\mathcal{E}},
        P|_{\mathcal{E}} \right)$ is a weakly stable $\Gamma_n$-contraction,
        \item $\left(S_1|_{\mathcal{E}^{\perp}}, \dots, S_{(n-1)}
        |_{\mathcal{E}^{\perp}}, P|_{\mathcal{E}^{\perp}}\right)$ is a $\Gamma_n$-unitary.
    \end{enumerate}
 \end{thm}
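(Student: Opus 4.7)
The argument has two phases: first establishing that $\mathcal{E}$ reduces each $S_i$, and then identifying the two restrictions via Theorem \ref{thm:gamma-ui}. I would begin with Theorem \ref{Foguel}, which gives $\mathcal{H} = \mathcal{E} \oplus \mathcal{E}^\perp$ reducing $P$ with $P = Z \oplus U$, where $Z = P|_{\mathcal{E}}$ is weakly stable and $U = P|_{\mathcal{E}^\perp}$ is unitary. Crucially, $I - P^*P = (I - Z^*Z) \oplus 0$, so $\mathcal{D}_P \subseteq \mathcal{E}$ and $D_P = D_Z \oplus 0$. For each $i$, I write $S_i$ in block form with respect to $\mathcal{H} = \mathcal{E} \oplus \mathcal{E}^\perp$ with entries $S_{ijk}$, $j,k \in \{1,2\}$; the task is to prove $S_{i12} = S_{i21} = 0$.

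By the existence-uniqueness Theorem \ref{existence-uniqueness}, there are $A_i \in \mathcal{B}(\mathcal{D}_P)$ with $S_i - S_{n-i}^* P = D_P A_i D_P$; since $D_P$ is supported in $\mathcal{E}$, the right hand side has its $(1,2)$, $(2,1)$ and $(2,2)$ blocks equal to zero, which forces the identities $S_{i12} = S_{(n-i)21}^* U$ and $S_{i21} = S_{(n-i)12}^* Z$. The commutativity $S_i P = P S_i$ in block form produces the intertwining $S_{i21} Z = U S_{i21}$, which iterates to $S_{i21} Z^k = U^k S_{i21}$ for every $k \geq 0$. The main obstacle is to convert the weak stability of $Z$ and the unitarity of $U$ into the conclusion $S_{i21} = 0$. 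For any $y \in \mathcal{E}$ and $w \in \mathcal{E}^\perp$,
\[
\langle U^k S_{i21} y, w\rangle = \langle S_{i21} Z^k y, w\rangle = \langle Z^k y, S_{i21}^* w\rangle \longrightarrow 0 \quad (k \to \infty),
\]
since $S_{i21}^* w \in \mathcal{E}$ and $Z$ is weakly stable on $\mathcal{E}$. Hence $v := S_{i21} y \in \mathcal{E}^\perp$ satisfies $U^k v \to 0$ weakly in $\mathcal{E}^\perp$. Decomposing any $h \in \mathcal{H}$ as $h = h_1 + h_2$ with $h_1 \in \mathcal{E}$ and $h_2 \in \mathcal{E}^\perp$, and using $P^k v = U^k v \in \mathcal{E}^\perp$, we get $\langle P^k v, h\rangle = \langle U^k v, h_2\rangle \to 0$, so $v \in \mathcal{E} \cap \mathcal{E}^\perp = \{0\}$. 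Thus $S_{i21} = 0$ for every $i \in \{1,\dots,n-1\}$; in particular $S_{(n-i)21} = 0$, so that $S_{i12} = S_{(n-i)21}^* U = 0$ as well. This settles (1).

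For (2) and (3), both restrictions $(S_1|_{\mathcal{E}},\dots,S_{n-1}|_{\mathcal{E}},Z)$ and $(S_1|_{\mathcal{E}^\perp},\dots,S_{n-1}|_{\mathcal{E}^\perp},U)$ are $\Gamma_n$-contractions as restrictions of the original $\Gamma_n$-contraction to joint reducing subspaces. The first has last component $Z$ weakly stable, hence is a weakly stable $\Gamma_n$-contraction. The second has last component $U$ unitary, hence is a $\Gamma_n$-unitary by Theorem \ref{thm:gamma-ui}. It is worth emphasizing that, in contrast to Theorems \ref{cd1}, \ref{cd4}, and \ref{cd5}, no additional commutativity hypothesis on $P^*$ with the $S_i$ is required here, because the identities coming from the $\ft$-tuple together with the unitarity of $U = P|_{\mathcal{E}^\perp}$ already encode enough rigidity.
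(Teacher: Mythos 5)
Your proof is correct, but it takes a genuinely different route from the one in the paper. You run the argument through the $\ft$-tuple: since $U=P|_{\mathcal{E}^{\perp}}$ is unitary, $D_P=D_Z\oplus 0$ is supported in $\mathcal{E}$, so the off-diagonal blocks of $S_i-S_{n-i}^*P=D_PA_iD_P$ vanish, and you then combine the resulting identities with the intertwining $S_{i21}Z^k=U^kS_{i21}$ and the weak stability of $Z$ to conclude that $v=S_{i21}y$ lies in $\mathcal{E}\cap\mathcal{E}^{\perp}=\{0\}$; the step where you pass from weak convergence of $U^kv$ in $\mathcal{E}^{\perp}$ to $v\in\mathcal{E}$ via the defining property of $\mathcal{E}$ is exactly the right way to handle the fact that a unitary can itself be weakly stable, and it holds up. The paper instead avoids the fundamental operators and block matrices entirely: it invokes the equalities $\mathcal{E}=\mathcal{E}^{\prime}=\mathcal{E}_{*}=\mathcal{E}_{*}^{\prime}$ (from Theorem 7.3 of Kubrusly's book), after which the single computation $\langle P^nS_ix,y\rangle=\langle P^nx,S_i^*y\rangle\to 0$ shows $S_i\mathcal{E}\subseteq\mathcal{E}$, and the adjoint version together with $\mathcal{E}=\mathcal{E}_{*}$ shows $S_i^*\mathcal{E}\subseteq\mathcal{E}$, so $\mathcal{E}$ reduces each $S_i$ in two lines. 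The paper's route is shorter and needs only commutativity plus the self-duality of $\mathcal{E}$; yours is longer but self-contained given Theorem \ref{existence-uniqueness}, follows the block-matrix template used for Theorems \ref{cd1} and \ref{mainthm}, and makes explicit why no extra hypothesis (such as $P^*S_i=S_iP^*$) is needed here. Both correctly finish by identifying the two restrictions via Theorems \ref{thm:gamma-ui} and the definition of weak stability.
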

\begin{proof}
    The proof goes through if we show that $\mathcal{E}$ is a reducing subspace for $S_1, \dots, S_{n-1}$.
    Set $$\mathcal{E}^{\prime} = \{x\in \mathcal{H} : \langle P^nx,x \rangle
    \rightarrow 0 \text{ as } n \rightarrow \infty\}\,,$$ $$\mathcal{E}_{*} =
    \{x\in \mathcal{H} : \langle P^{*n}x,y \rangle \rightarrow 0 \text{ as }
    n \rightarrow \infty, \text{ for all } y \in \mathcal{H}\}$$ and
    $$\mathcal{E}_{*}^{\prime} = \{x\in \mathcal{H} : \langle P^{*n}x,x \rangle
    \\ \rightarrow 0 \text{ as } n \rightarrow \infty\}.$$
    Then following the proof of Theorem 7.3 in \cite{K:C}, we have
    the following equality:
    \[
    \mathcal{E} = \mathcal{E}^{\prime} = \mathcal{E}_{*} = \mathcal{E}_{*}^{\prime}.
    \]
    Now for any $ y \in \mathcal{H} $ and  for all $i = 1, \dots, n-1$,
    \[\langle P^nS_ix, y \rangle = \langle S_i P^nx, y \rangle =
    \langle P^nx, S_i^*y \rangle  \rightarrow 0 \text{ as } n \rightarrow \infty. \]
     Similarly, for any $y\in\mathcal{H}$, $$\langle P^{*n}
     S_i^*x,y \rangle = \langle S_i^*P^{*n}x,y \rangle =
     \langle P^{*n}x,S_iy \rangle \rightarrow 0 \text{ as }
     n \rightarrow \infty \,.$$ Therefore, $\mathcal{E}$
     is reducing subspace for $S_1, \dots, S_{n-1}$ and the proof
     is complete.
   \end{proof}

\section{Structure theorems for $\mathbb
E$-contractions}\label{tetra-block}

In the previous section we presented few decomposition theorems
for $\Gamma_n$-contractions. In this section, we show that similar
theorems can be obtained for $\mathbb E$-contractions. We begin
with the analogue of Theorem \ref{thmlv1} in the tetrablock
setting.

\begin{thm}\label{cd11}
    Let $(A,B,P)$ be a c.n.u $\mathbb E$-contraction on a Hilbert
    space $\mathcal{H}$. Let $\mathcal{H}_1$ be the maximal subspace
    of $\mathcal{H}$ which reduces $P$ and on which $P$ is isometry.
    Let $\mathcal{H}_2 = \mathcal{H}\ominus \mathcal{H}_1$. If either
    $A^*, B^*$ commute with $P$ or $\mathcal{H}_1$ is the maximal
    invariant subspace for $P$ on which $P$ is isometry, then
    \begin{enumerate}
        \item $\mathcal{H}_1$, $\mathcal{H}_2$ reduce $A$ and $B$;
        \item $\left(A|_{\mathcal{H}_1}, B|_{\mathcal{H}_1},P|_{\mathcal{H}_1}\right)$ is a c.n.u $\mathbb E$-isometry;
        \item $\left(A|_{\mathcal{H}_2},B|_{\mathcal{H}_2},P|_{\mathcal{H}_2}\right)$ is a c.n.i $\mathbb E$-contraction.
    \end{enumerate}
    The subspaces $\mathcal{H}_1$ or $\mathcal{H}_2$ may equal to the trivial subspace $\{0\}$.
\end{thm}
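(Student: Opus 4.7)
The plan is to mirror the strategy of Theorem \ref{cd1}, with the role of the operator-pencil positivity used there now played by the two fundamental equations of $(A,B,P)$ provided by Theorem \ref{exist-tetra}. First I would dispatch the trivial cases: if $P$ is c.n.i then $\mathcal H_1=\{0\}$, and if $P$ is a c.n.u isometry then $\mathcal H_2=\{0\}$. Assume both subspaces are nontrivial and write $P=P_1\oplus P_2$, with $P_1$ a c.n.u isometry on $\mathcal H_1$ and $P_2$ a c.n.i contraction on $\mathcal H_2$. Since $P_1^*P_1=I_{\mathcal H_1}$, the defect operator splits as $D_P=0\oplus D_{P_2}$, so the unique fundamental operators $F_1,F_2$ of $(A,B,P)$ live on $\mathcal D_{P_2}\subseteq\mathcal H_2$.

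Next, I would write $A=[A_{ij}]$ and $B=[B_{ij}]$ as $2\times2$ operator matrices with respect to $\mathcal H=\mathcal H_1\oplus\mathcal H_2$ and read off the $(1,1),(1,2),(2,1)$ blocks of $A-B^*P=D_PF_1D_P$ and $B-A^*P=D_PF_2D_P$. Since $D_P$ annihilates $\mathcal H_1$, this yields
\begin{align*}
A_{11}&=B_{11}^*P_1, & A_{12}&=B_{21}^*P_2, & A_{21}&=B_{12}^*P_1,\\
B_{11}&=A_{11}^*P_1, & B_{12}&=A_{21}^*P_2, & B_{21}&=A_{12}^*P_1.
\end{align*}
Combining $A_{21}=B_{12}^*P_1=(A_{21}^*P_2)^*P_1=P_2^*A_{21}P_1$ with the consequence $P_2A_{21}=A_{21}P_1$ of $AP=PA$ gives $P_2^*P_2A_{21}=A_{21}$, so $P_2$ is isometric on $\overline{\text{Ran}}(A_{21})$, which is $P_2$-invariant. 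The identical computation for $B_{21}$ shows $P_2$ is isometric on the $P_2$-invariant subspace $\overline{\text{Ran}}(B_{21})$.

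The final step is to force these two ranges to vanish. In Case I, the extra commutations $A^*P=PA^*$ and $B^*P=PB^*$ yield $P_2^*A_{21}=A_{21}P_1^*$ and $P_2^*B_{21}=B_{21}P_1^*$, so the ranges are also $P_2^*$-invariant and hence reduce $P_2$; since $P_2$ is c.n.i, both must be $\{0\}$. In Case II the subspaces are only $P_2$-invariant, but since $\mathcal H_2$ reduces $P$ they are then invariant subspaces of $P$ itself on which $P$ is isometric, and the maximality of $\mathcal H_1$ as such an invariant subspace pins them into $\mathcal H_1\cap\mathcal H_2=\{0\}$. In either case $A_{21}=B_{21}=0$, whence $A_{12}=B_{21}^*P_2=0$ and $B_{12}=A_{21}^*P_2=0$. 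Thus $A,B$ are block diagonal with respect to $\mathcal H_1\oplus\mathcal H_2$, so both $\mathcal H_1$ and $\mathcal H_2$ reduce $A$ and $B$; Theorem \ref{thm:ti} then identifies $(A|_{\mathcal H_1},B|_{\mathcal H_1},P_1)$ as a c.n.u $\mathbb E$-isometry (since $P_1$ is a c.n.u isometry) and $(A|_{\mathcal H_2},B|_{\mathcal H_2},P_2)$ as a c.n.i $\mathbb E$-contraction (since $P_2$ is c.n.i). The hard part is the subtle point that drives Theorem \ref{cd1} as well: the bare c.n.i assumption on $P_2$ only excludes \emph{reducing} isometric subspaces, so without either the commutation of $A^*,B^*$ with $P$ (which upgrades invariance to reducing) or the maximality of $\mathcal H_1$ among invariant subspaces (which handles invariance directly), the two ranges need not vanish and the decomposition can fail.
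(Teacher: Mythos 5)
Your proposal is correct and follows essentially the same route as the paper's own proof: the trivial cases, the splitting $D_P=0\oplus D_{P_2}$, reading off the block entries of the two fundamental equations from Theorem \ref{exist-tetra}, the identity $P_2^*P_2A_{21}=A_{21}$ (and its analogue for $B_{21}$), and then killing the off-diagonal blocks via reducibility of $\overline{\text{Ran}}(A_{21})$ in Case I or maximality of $\mathcal H_1$ among invariant isometric subspaces in Case II. The concluding identifications via Theorem \ref{thm:ti} also match the paper.
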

\begin{proof}
    \textbf{Case I.} Let $A^*$ and $B^*$  commute with $P$. If $P$ is a c.n.i contraction
    then $\mathcal H_1=\{0\}$ and if $P$ is a c.n.u isometry then $\mathcal
    H=\mathcal H_1$ and so $\mathcal H_2=\{0\}$. In such cases the
    theorem is trivial. Suppose $P$ is neither a c.n.u isometry nor
    a c.n.i contraction. With respect to the decomposition $\mathcal H=\mathcal H_1\oplus
    \mathcal H_2$, let
    \[
    A=
    \begin{bmatrix}
    A_{11}&A_{12}\\
    A_{21}&A_{22}
    \end{bmatrix}\,,\,
    B=
    \begin{bmatrix}
    B_{11}&B_{12}\\
    B_{21}&B_{22}
    \end{bmatrix}
    \text{ and } P=
    \begin{bmatrix}
    P_1&0\\
    0&P_2
    \end{bmatrix}
    \]
    so that $P_1$ is a shift operator and $P_2$ is c.n.i.
    Since $(A, B, P)$ is an $\mathbb E$-contraction on a Hilbert space
    $\mathcal{H}$, so there exist two unique operators $F_1$ and $F_2$ on $\mathcal{D}_P$ such that
    \[
    A-B^*P = D_PF_1D_P \quad \text{ and }\quad B-A^*P = D_PF_2D_P.
    \]
    With respect to the decomposition $\mathcal{D}_P = \mathcal{D}_{P_1} \oplus \mathcal{D}_{P_2} = \{0\}\oplus\mathcal{D}_{P_2}$, let
    \[ F_i=
    \begin{bmatrix}
    0 & 0\\
    0 & F_{i22}
    \end{bmatrix}
    \text{ for } i = 1, 2.
    \]
    Then from $A-B^*P = D_PF_1D_P$, we have
    \begin{align}
    A_{11}&=B_{11}^*P_1      & A_{12}&=B_{21}^*P_2\,, \label{eqn:2.1} \\
    A_{21}&=B_{12}^*P_1    & A_{22} - B_{22}^*P_2&=D_{P_2}F_{122}D_{P_2}\,. \label{eqn:2.2}
    \end{align}
    Similarly from $B-A^*P = D_PF_2D_P$, we have
    \begin{align}
    B_{11}&=A_{11}^*P_1      & B_{12}&=A_{21}^*P_2\,, \label{eqn:2.3} \\
    B_{21}&=A_{12}^*P_1    & B_{22} - A_{22}^*P_2&=D_{P_2}F_{222}D_{P_2}\,. \label{eqn:2.4}
    \end{align}
    Since $\|B\|\leq 1$, so $\|B_{11}\|\leq 1$. Therefore, by
    part-(3) of Theorem \ref{thm:ti}, $(A_{11},B_{11},P_1)$ is a c.n.u $\mathbb E$-isometry.\\
    Since $AP = PA$ and $BP = PB$, so we have
    \begin{align}
    A_{11}P_1&=P_1A_{11}    & A_{12}P_2=P_1A_{12}\,, \label{eqn:2.5} \\
    A_{21}P_1&=P_2A_{21}    & A_{22}P_2=P_2A_{22}\,, \label{eqn:2.6} \\
    B_{11}P_1&=P_1B_{11}    & B_{12}P_2=P_1B_{12}\,, \label{eqn:2.7} \\
    B_{21}P_1&=P_2B_{21}    & B_{22}P_2=P_2B_{22}\,. \label{eqn:2.8}
    \end{align}
    From the first equation in (\ref{eqn:2.6}) we have range
    of $A_{21}$ is invariant under $P_2$. Again from $AP^*=P^*A$ we have
    \[
    A_{21}P_1^* = P_2^*A_{21},
    \]
    that is range of $A_{21}$ is invariant under $P_2^*$ also.
    Therefore, range of $A_{21}$ is reducing subspace for $P_2$.
    Now from the first equation in (\ref{eqn:2.6}) and the second equation in (\ref{eqn:2.3}) we have
    \[
    P_2^*P_2A_{21}  = P_2^*A_{21}P_1 = B_{12}^*P_1 = A_{21}.
    \]
    This shows that $P_2$ is isometry on range of $A_{21}$. But $P_2$ is completely non-isometry.
    Therefore, we must have $A_{21}=0.$ Now from (\ref{eqn:2.3}), $B_{12} = 0$.
    Similarly, we can prove that $B_{21} = 0$. Now from (\ref{eqn:2.1}), $A_{12} = 0$.
    Thus with respect to the decomposition $\mathcal{H} =\mathcal{H}_1 \oplus \mathcal{H}_2$
    \[
    A=
    \begin{bmatrix}
    A_{11}&0\\
    0&A_{22}
    \end{bmatrix}\,,\,
    B=
    \begin{bmatrix}
    B_{11}&0\\
    0&B_{22}
    \end{bmatrix}.
    \]
    So, $\mathcal{H}_1$ and $\mathcal{H}_2$ reduce $A$ and $B$. Now $(A_{22}, B_{22},P_2)$
    is the restriction of the $\mathbb E$-contraction $(A, B, P)$ to the reducing
    subspace $\mathcal{H}_2$. Therefore, $(A_{22}, B_{22}, P_2)$ is an $\mathbb E$-contraction.
    Since $P_2$ is c.n.i, $(A_{22}, B_{22}, P_2)$ is a c.n.i $\mathbb E$-contraction.  \\

   \noindent  \textbf{Case II.} Suppose $\mathcal{H}_1$ is also the maximal
   invariant subspace for $P$ on which $P$ is isometry. Now from the first
   equation in (\ref{eqn:2.6}) and the second equation in (\ref{eqn:2.3}) we have range of $A_{21}$ is invariant under $P_2$ and
    \[
    P_2^*P_2A_{21}  = P_2^*A_{21}P_1 = B_{12}^*P_1 = A_{21}.
    \]
    This shows that $P_2$ is isometry on range of $A_{21}$. Therefore,
    we must have $A_{21}=0.$ Now from (\ref{eqn:2.3}), $B_{12} = 0$.
    Similarly, we can prove that $B_{21} = 0$. Now from (\ref{eqn:2.1}), $A_{12} = 0$. This completes the proof.
\end{proof}

The above theorem may not be true if we drop the assumptions that
either $A^*$ and $B^*$ commute with $P$ or $\mathcal{H}_1$ is also
the maximal invariant subspace for $P$ on which $P$ is an
isometry. Before going to present a counterexample we shall recall
some useful facts from the literature.

\begin{lem}[\cite{S:S}, Theorem 4.3]\label{countlem}
    If $(s, p) \in \Gamma$ then $\left(\dfrac{s}{2}, \dfrac{s}{2},p\right) \in \overline{\mathbb E}$.
\end{lem}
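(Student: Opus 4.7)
The plan is to reduce the membership in $\overline{\mathbb E}$ to a $2\times 2$ matrix model, exploiting the fact that a point of the closed symmetrized bidisc $\Gamma = \Gamma_2$ arises by symmetrizing two points of the closed disc. Concretely, $(s,p) \in \Gamma_2$ if and only if there exist $z_1, z_2 \in \overline{\mathbb D}$ with $s = z_1 + z_2$ and $p = z_1 z_2$; this will be the starting point.

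Given such $z_1, z_2$, I would conjugate the diagonal matrix $D = \mathrm{diag}(z_1, z_2)$ by the Hadamard unitary $U = \tfrac{1}{\sqrt 2}\bigl(\begin{smallmatrix} 1 & 1 \\ 1 & -1 \end{smallmatrix}\bigr)$ to produce
\[
X \;=\; U D U^* \;=\; \tfrac{1}{2}\begin{pmatrix} s & z_1 - z_2 \\ z_1 - z_2 & s \end{pmatrix}.
\]
A direct computation gives $X_{11} = X_{22} = s/2$, $\det X = \tfrac{1}{4}(s^2 - (z_1 - z_2)^2) = z_1 z_2 = p$, and $\|X\| = \|D\| = \max(|z_1|, |z_2|) \leq 1$.

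Finally, I would verify that such a matrix model forces $(s/2, s/2, p) \in \overline{\mathbb E}$ by the very definition. For any $z, w \in \mathbb D$, writing $\Delta = \mathrm{diag}(z,w)$, we have $\|\Delta X\| \leq \|\Delta\|\,\|X\| < 1$, so $I - \Delta X$ is invertible. Expanding,
\[
\det(I - \Delta X) \;=\; \bigl(1 - \tfrac{s}{2} z\bigr)\bigl(1 - \tfrac{s}{2} w\bigr) - \tfrac{1}{4}(z_1 - z_2)^2 zw \;=\; 1 - \tfrac{s}{2} z - \tfrac{s}{2} w + p zw,
\]
which is therefore nonzero for all $z,w \in \mathbb D$. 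Taking closure in the defining inequality (equivalently, approximating $(s,p)$ from $\mathbb G_2$ and $(z,w)$ from $\mathbb D$) yields $(s/2, s/2, p) \in \overline{\mathbb E}$.

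The only step that requires any care is the last one, namely the passage from ``$1 - z(s/2) - w(s/2) + zw p \neq 0$ for all $z, w \in \mathbb D$'' to membership in $\overline{\mathbb E}$, since the tetrablock is defined using $\overline{\mathbb D}$ rather than $\mathbb D$. This is standard and can be handled either by perturbing $(s,p)$ slightly into $\mathbb G_2$ (where $|z_1|, |z_2| < 1$, so $\|X\| < 1$ and the determinant is nonzero on the closed bidisc, placing $(s/2, s/2, p)$ in $\mathbb E$), and then taking a limit, or by invoking the equivalence between the two forms of the defining condition. The rest of the argument is algebraic bookkeeping once the Hadamard conjugation trick is in hand.
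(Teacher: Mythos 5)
Your proof is correct. Note that the paper itself offers no proof of this lemma --- it is quoted verbatim from \cite{S:S} (Theorem 4.3) --- so there is no internal argument to compare against; what you have written is a legitimate self-contained derivation. The key computation checks out: with $s=z_1+z_2$, $p=z_1z_2$ and $U$ the Hadamard unitary, $UDU^*$ is indeed $\tfrac12\bigl(\begin{smallmatrix} s & z_1-z_2\\ z_1-z_2 & s\end{smallmatrix}\bigr)$, which has equal diagonal entries $s/2$, determinant $p$, and norm $\max(|z_1|,|z_2|)\leq 1$; and the identity $\det(I-\Delta X)=1-\tfrac{s}{2}z-\tfrac{s}{2}w+pzw$ follows from $(z_1+z_2)^2-(z_1-z_2)^2=4z_1z_2$. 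You correctly identify the only delicate point, namely that the defining condition for $\mathbb E$ quantifies over $\overline{\mathbb D}$ while the norm bound $\|X\|\leq 1$ only rules out zeros for $z,w$ in the open disc; your perturbation fix (replace $(z_1,z_2)$ by $(rz_1,rz_2)$ with $r<1$, so that $\|X\|<1$ and $(rs/2,rs/2,r^2p)\in\mathbb E$ outright, then let $r\to 1$) closes that gap without needing to invoke the open-disc characterization of $\overline{\mathbb E}$ from \cite{A:W:Y}. In effect you have reproved the standard matricial description of $\overline{\mathbb E}$ in the special case of a symmetric matrix with equal diagonal entries, which is essentially the mechanism behind the cited result as well.
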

An operator-analogue of the previous lemma follows immediately.
\begin{lem}\label{Econtrac}
    If $(S, P)$ is a $\Gamma$-contraction then $\left(\dfrac{S}{2}, \dfrac{S}{2}, P\right)$ is an $\mathbb E$-contraction.
\end{lem}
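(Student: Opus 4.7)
The plan is to reduce the claim to a polynomial von Neumann inequality via Lemma \ref{lemTB} and then exploit the pointwise containment given by Lemma \ref{countlem}. Commutativity of the triple $(S/2,S/2,P)$ is immediate from commutativity of $(S,P)$.

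Given a holomorphic polynomial $g(x_1,x_2,x_3)$, define the two-variable polynomial $f(s,p) := g(s/2,\,s/2,\,p)$. Since $(S,P)$ is a $\Gamma$-contraction, the polynomial von Neumann inequality on $\Gamma$ (a consequence of the Oka--Weil argument recorded in Lemma \ref{SP1} for $n=2$) yields
\[
\|g(S/2,S/2,P)\| \;=\; \|f(S,P)\| \;\leq\; \|f\|_{\infty,\Gamma}.
\]
Now for each point $(s,p)\in\Gamma$, Lemma \ref{countlem} gives $(s/2,s/2,p)\in\overline{\mathbb E}$, hence
\[
|f(s,p)| \;=\; |g(s/2,s/2,p)| \;\leq\; \|g\|_{\infty,\overline{\mathbb E}}.
\]
Taking the supremum over $(s,p)\in\Gamma$ produces $\|f\|_{\infty,\Gamma}\leq \|g\|_{\infty,\overline{\mathbb E}}$, and chaining the two inequalities gives the polynomial von Neumann inequality for $(S/2,S/2,P)$ on $\overline{\mathbb E}$.

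The remaining ingredient is the Taylor joint spectrum condition $\sigma_T(S/2,S/2,P)\subseteq \overline{\mathbb E}$, which follows from the spectral mapping theorem applied to the polynomial map $(s,p)\mapsto(s/2,s/2,p)$ together with the inclusion $\sigma_T(S,P)\subseteq\Gamma$ and a second application of Lemma \ref{countlem} at the scalar level. With the spectral inclusion and the polynomial inequality in hand, Lemma \ref{lemTB} promotes the inequality to all rational functions with poles off $\overline{\mathbb E}$, so $\overline{\mathbb E}$ is a spectral set for $(S/2,S/2,P)$, i.e.\ $(S/2,S/2,P)$ is an $\mathbb E$-contraction.

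There is no genuine obstacle here: once the scalar inclusion $\Gamma\ni(s,p)\mapsto(s/2,s/2,p)\in\overline{\mathbb E}$ from Lemma \ref{countlem} is invoked, the argument is essentially the standard composition trick for spectral sets. The only small subtlety to keep in mind is that $\mathbb E$ is defined as an open set in the paper while $\overline{\mathbb E}$ is the relevant spectral set; this is harmless because $g$ is continuous, so $\|g\|_{\infty,\overline{\mathbb E}}=\|g\|_{\infty,\mathbb E}$, matching the normalization used in Lemma \ref{lemTB}.
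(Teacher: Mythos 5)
Your proof is correct and follows essentially the same route as the paper's: compose a three-variable polynomial with the map $(s,p)\mapsto\left(\frac{s}{2},\frac{s}{2},p\right)$, apply the polynomial von Neumann inequality for the $\Gamma$-contraction $(S,P)$, and use Lemma \ref{countlem} to pass from the supremum over $\Gamma$ to the supremum over $\overline{\mathbb E}$, concluding via Lemma \ref{lemTB}. The paragraph on the Taylor joint spectrum is superfluous, since Lemma \ref{lemTB} already reduces the definition of an $\mathbb E$-contraction to the polynomial inequality alone.
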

\begin{proof}
    Let $g$ be the map from $\Gamma$ to $\mathbb E$ that maps $(s, p)$ to $(\dfrac{s}{2}, \dfrac{s}{2}, p)$.
    Then for any holomorphic polynomial $f$ in three variables we have
    \[
    \left\| f\left(\dfrac{S}{2}, \dfrac{S}{2}, P\right)\right\| =
    \|f\circ g(S, P)\| \leq \|f\circ g\|_{\infty, \Gamma} = \|f\|_{\infty, g(\Gamma)} \leq \|f\|_{\infty, \overline{\mathbb E}}\,.
    \]
    Then by Lemma \ref{lemTB} $\left(\dfrac{S}{2}, \dfrac{S}{2}, P\right)$ is an $\mathbb E$-contraction.
\end{proof}

\noindent We now present a counter example.

\begin{eg}\label{example12}
    We consider $\mathcal{H} = \ell^2$, where
    \[
    \ell^2 = \left\{\{x_n\}_n : x_n \in \mathbb{C} \text{ and } \sum_{n=1}^{\infty}|x_n|^2 < \infty\right\}.
    \]
    Now consider an operator $P : \ell^2 \rightarrow \ell^2$ defined by
    \[
    P(x_1, x_2, x_3, \dots) = (0, \frac{x_1}{2}, x_2, x_3, \dots).
    \]
    Then by Ando's dilation ( see \cite{Ando}) $(2P, P^2)$ is a $\Gamma$-contraction on $\ell^2$.
    Therefore, by Lemma \ref{Econtrac} $(P, P, P^2)$ is an $\mathbb E$-contraction on $\ell^2$. It is clear that
    \[
    P^{*2}P^2(x_1, x_2, x_3, \dots) = (\frac{x_1}{4}, x_2, x_3, \dots).
    \]
    Suppose $H = \{(0, x_1, x_2, \dots) : x_i \in \mathbb{C}\}$. Then clearly $H$ is
    the maximal invariant subspace for $P^2$ on which $P^2$ is isometry.
    One can easily check that $P^*P^2 \neq P^2P^*$. Suppose $H_1 (\subset H)$
    is the maximal reducing subspace for $P^2$ on which $P^2$ is isometry.\\

    \noindent \textbf{Claim:} $H_1 = \{(0, x_1, 0, x_2, 0, x_3, \dots) : x_i \in \mathbb{C}\}$.\\
    \noindent \textbf{Proof of Claim.} It is evident that $\{(0, x_1, 0, x_2, 0, x_3, \dots) : x_i \in \mathbb{C}\}
    \subseteq H_1$. Suppose $(0, \dots, 0, \underbrace{x_{2n+1}}_{\text{(2n+1)th position}}, 0, \dots)
    \in H_1$. Since $H_1$ is reducing subspace for $P^2$ so $P^{*2n}(0, \dots, 0, x_{2n+1}, 0, \dots)
    \in H_1$ i.e., $(\frac{x_{2n+1}}{2}, 0, 0, \dots) \in H$. Therefore, we must have $x_{2n+1} = 0$.
    This completes the proof of claim. \\

    It is now clear that $H_1$ is not a reducing subspace for $P$.
\end{eg}

Here is an analogue of Kubrusly-type decomposition (Theorem
\ref{Kdecom}) for $\mathbb E$-contractions.

\begin{thm}\label{mainthm1}
    Let $(A,B,P)$ be an $\mathbb E$-contraction on a Hilbert space
    $\mathcal{H}$. If $\mathcal{T} = \mathcal{T}^2$, then
    \begin{enumerate}
        \item $Ker(\mathcal{T})$, $Ker(I-\mathcal{T})\cap Ker(\mathcal{T}_*)$
        and $Ker(I-\mathcal{T})\cap Ker(I-\mathcal{T}_*)$ reduce $A \text{ and } B$
        \item $\left(A|_{Ker(\mathcal{T})},B|_{Ker(\mathcal{T})},P|_{Ker(\mathcal{T})}\right)$
        is a strongly stable $\mathbb E$-contraction,
        \item $\left(A|_{Ker(I-\mathcal{T})\cap Ker(\mathcal{T}_*)},B|_{Ker(I-\mathcal{T})
        \cap Ker(\mathcal{T}_*)},P|_{Ker(I-\mathcal{T})\cap Ker(\mathcal{T}_*)}\right)$
        is a pure $\mathbb E$-isometry on $Ker(I-\mathcal{T})\cap Ker(\mathcal{T}_*) $,
        \item $\left(A|_{Ker(I-\mathcal{T})\cap Ker(I-\mathcal{T}_*)},
        B|_{Ker(I-\mathcal{T})\cap Ker(I-\mathcal{T}_*)},P|_{Ker(I-\mathcal{T})
        \cap Ker(I-\mathcal{T}_*)}\right)$ is an $\mathbb E$-unitary.
    \end{enumerate}
    Moreover, if $\mathcal{T} = \mathcal{T}^2$ and $\mathcal{T}_* = \mathcal{T}_{*}^2$, then
    \begin{enumerate}
        \item[$(1')$] $\left(A|_{Ker(\mathcal{T})\cap Ker(\mathcal{T}_{*})},
        B|_{Ker(\mathcal{T})\cap Ker(\mathcal{T}_{*})},P|_{Ker(\mathcal{T})\cap Ker(\mathcal{T}_{*})}\right)$
        is a $\mathcal{C}_{00}$  $\mathbb E$-contraction,
        \item[$(2')$] $\left(A^*|_{Ker(\mathcal{T})\cap Ker(I-\mathcal{T}_{*})},
        B^*|_{Ker(\mathcal{T})\cap Ker(I-\mathcal{T}_{*})},P^*|_{Ker(\mathcal{T})
        \cap Ker(I-\mathcal{T}_{*})}\right)$ is a pure $\mathbb E$-isometry
        on $Ker(\mathcal{T})\cap Ker(I-\mathcal{T}_{*})$.
    \end{enumerate}
    Furthermore, if $\mathcal{T} = \mathcal{T}_{*}$, then
    \begin{enumerate}
        \item[$(1'')$] $\left(A|_{Ker(\mathcal{T})},
        B|_{Ker(\mathcal{T})},P|_{Ker(\mathcal{T})}\right)$ is
        a $\mathcal{C}_{00}$  $\mathbb E$-contraction,
        \item[$(2'')$] $\left(A|_{Ker(I-\mathcal{T})},B|_{Ker(I-\mathcal{T})},
        P|_{Ker(I-\mathcal{T})}\right)$ is an $\mathbb E$-unitary.
    \end{enumerate}
\end{thm}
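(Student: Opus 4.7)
The plan is to adapt the proof of Theorem \ref{mainthm} almost verbatim, replacing the $\ft$-tuple $(F_1,\dots,F_{n-1})$ by the pair of fundamental operators $(F_1,F_2)$ supplied by Theorem \ref{exist-tetra}, and replacing the characterization of $\Gamma_n$-isometries (Theorem \ref{gammaiso}) by Theorem \ref{thm:ti}.

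First I would invoke Proposition 3.3 of \cite{K:C} to get the orthogonal decomposition $\mathcal H = \text{Ker}(I-\mathcal T)\oplus \text{Ker}(\mathcal T)$, both summands being reducing for $P$, and write $P = V\oplus P_1$, where $V$ is an isometry on $\text{Ker}(I-\mathcal T)$ and $P_1$ is a strongly stable contraction on $\text{Ker}(\mathcal T)$. Writing $A=[A_{ij}]$, $B=[B_{ij}]$ as $2\times 2$ block matrices with respect to this decomposition and noting that $\mathcal D_P = \{0\}\oplus \mathcal D_{P_1}$ (so each fundamental operator $F_i$ has only a $(2,2)$-entry $F_{i22}$), the equations $A-B^*P=D_PF_1D_P$ and $B-A^*P=D_PF_2D_P$ collapse on the $(1,1),(1,2),(2,1)$ blocks to
\begin{equation*}
A_{11}=B_{11}^*V,\; A_{12}=B_{21}^*P_1,\; A_{21}=B_{12}^*V,\; B_{11}=A_{11}^*V,\; B_{12}=A_{21}^*P_1,\; B_{21}=A_{12}^*V.
\end{equation*}

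Next I would use the commutativity $AP=PA$ and $BP=PB$ block-wise. In particular $A_{21}V=P_1A_{21}$ says $\mathrm{Ran}\,A_{21}$ is $P_1$-invariant, and combining with $B_{12}=A_{21}^*P_1$ and $A_{21}=B_{12}^*V$ yields $P_1^*P_1A_{21}=P_1^*A_{21}V=B_{12}^*V=A_{21}$, so $P_1$ is an isometry on $\mathrm{Ran}\,A_{21}$. Since $P_1$ is strongly stable this forces $A_{21}=0$, and then the identities above give $B_{12}=0$. The symmetric argument using $B_{21}V=P_1B_{21}$ together with $A_{12}=B_{21}^*P_1$ and $B_{21}=A_{12}^*V$ yields $B_{21}=0$ and $A_{12}=0$. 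Therefore both $\text{Ker}(I-\mathcal T)$ and $\text{Ker}(\mathcal T)$ reduce $A$ and $B$; the restriction $(A_{11},B_{11},V)$ is an $\mathbb E$-contraction with $V$ an isometry, hence an $\mathbb E$-isometry by Theorem \ref{thm:ti}, and $(A_{22},B_{22},P_1)$ is a strongly stable $\mathbb E$-contraction.

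To refine the $\mathbb E$-isometric summand, I would apply the Wold decomposition to $V$: by Theorem 5.8 of \cite{K:C}, $\text{Ker}(I-\mathcal T)=\mathcal U\oplus\mathcal U^\perp$ with $\mathcal U=\text{Ker}(I-\mathcal T)\cap\text{Ker}(I-\mathcal T_*)$ (unitary part) and $\mathcal U^\perp=\text{Ker}(I-\mathcal T)\cap\text{Ker}(\mathcal T_*)$ (pure shift part). The tetrablock analogue of the $\Gamma_n$-isometry splitting (the canonical decomposition of an $\mathbb E$-isometry into $\mathbb E$-unitary and pure $\mathbb E$-isometry parts, cf. \cite{T:B}) then shows $\mathcal U,\mathcal U^\perp$ reduce $A_{11},B_{11}$, giving items (3) and (4). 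The \emph{moreover} clause (when also $\mathcal T_*=\mathcal T_*^2$) follows by applying the same argument to the adjoint $\mathbb E$-contraction $(A^*,B^*,P^*)$ restricted to $\text{Ker}(\mathcal T)$, using that $\mathcal A_*'$ (the strong limit of $P_1^n P_1^{*n}$) is idempotent there by Theorem 5.8 of \cite{K:C}. The \emph{furthermore} clause ($\mathcal T=\mathcal T_*$) reduces to the already-proved case via Proposition 3.4 of \cite{K:C}, which forces $\mathcal T=\mathcal T^2$ and collapses the two cross-intersections to $\{0\}$.

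The main obstacle I anticipate is cleanly establishing the reducing property of the off-diagonal blocks (the step $A_{21}=B_{12}=0$): one must carefully combine the two fundamental equations with the commutation relations so that the isometry identity $P_1^*P_1A_{21}=A_{21}$ emerges; this is exactly where strong stability of $P_1$ is spent, and the same mechanism is then reused (on the adjoint) to get the pure $\mathbb E$-isometric summand in item $(2')$.
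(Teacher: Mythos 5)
Your proposal is correct and is essentially the paper's own proof: the authors simply state that the proof is similar to that of Theorem \ref{mainthm} and skip it, and your adaptation (fundamental operator pair in place of the $\ft$-tuple, Theorem \ref{thm:ti} in place of Theorem \ref{gammaiso}, the same block computation forcing $A_{21}=B_{12}=B_{21}=A_{12}=0$, the Wold decomposition of $V$, the adjoint argument for the moreover clause, and Proposition 3.4 of \cite{K:C} for the furthermore clause) is exactly the intended translation.
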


\begin{proof}
The proof is similar to the proof of Theorem \ref{mainthm} and we
skip it.
\end{proof}

The following theorem is an analogue of Theorem \ref{thm:11} for $\mathbb E$-contractions.

\begin{thm}\label{decomp:thm1}
    Let $\left( A, B , P \right)$ be an $\mathbb E$-contraction on a Hilbert space $\mathcal{H}$. Then
    \begin{enumerate}
        \item $ \overline {\text{Ran}}(I-P)$, $\text{Ker}(I-P)$ reduce $ A, B $;
        \item $\left( A, B, P \right)$ on $\overline{\text{Ran}}(I-P)$ is an $ \mathbb E $-contraction;
        \item $\left(A, B, P \right)$ on $\text{Ker}(I-P)$ is an $\mathbb E$-unitary.
    \end{enumerate}
\end{thm}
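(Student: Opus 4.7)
The plan is to mirror the proof of Theorem \ref{thm:11} essentially verbatim, since the only structural ingredients I need are (i) the restriction of an $\mathbb E$-contraction to a joint reducing subspace is again an $\mathbb E$-contraction (immediate from Lemma \ref{lemTB}, as polynomials in $A,B,P$ compress to polynomials in the restrictions), and (ii) any $\mathbb E$-contraction whose last component is unitary is automatically an $\mathbb E$-unitary (Theorem \ref{thm:tu}).

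First I would establish the identity $\text{Ker}(I-P)=\text{Ker}(I-P^*)$, which holds for any contraction $P$: if $Px=x$ then
\[
\|P^*x-x\|^2 = \|P^*x\|^2 - 2\,\text{Re}\,\langle P^*x,x\rangle + \|x\|^2 = \|P^*x\|^2 - \|x\|^2 \leq 0,
\]
so $P^*x=x$, and the reverse implication is symmetric. Next, for any $x\in\text{Ker}(I-P)$, commutativity of $A$ with $P$ gives $(I-P)Ax=A(I-P)x=0$, and similarly $(I-P)Bx=0$, so both $Ax$ and $Bx$ lie in $\text{Ker}(I-P)$. Applying the same argument to $A^*$ and $B^*$ (and using the identification $\text{Ker}(I-P)=\text{Ker}(I-P^*)$) shows that $A^*x,B^*x\in\text{Ker}(I-P)$ too. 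Hence $\text{Ker}(I-P)$ is a joint reducing subspace for $A$ and $B$, and consequently so is its orthogonal complement $\overline{\text{Ran}}(I-P)$. This proves (1).

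For (2) and (3), ingredient (i) above immediately yields that $(A,B,P)$ restricted to either reducing subspace is an $\mathbb E$-contraction, giving (2). On $\text{Ker}(I-P)$ the compression of $P$ is the identity operator, which is unitary, so Theorem \ref{thm:tu} upgrades the restriction to an $\mathbb E$-unitary, yielding (3).

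There is no genuine obstacle here: the proof is parallel to that of Theorem \ref{thm:11}, the one small observation being that $\text{Ker}(I-P)=\text{Ker}(I-P^*)$ for contractions is what makes the invariance under $A$ automatically promote to reducibility. The only substantive difference from Theorem \ref{thm:11} is that the conclusion on $\text{Ker}(I-P)$ is strengthened from merely ``$\mathbb E$-identity'' to ``$\mathbb E$-unitary'' via Theorem \ref{thm:tu}, but since the identity is a unitary this is really the same statement.
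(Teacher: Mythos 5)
Your proof is correct and follows essentially the same route as the paper, which simply refers to the proof of Theorem \ref{thm:11}: show $\text{Ker}(I-P)=\text{Ker}(I-P^*)$, use commutativity to get joint reducibility, and then invoke the restriction property together with Theorem \ref{thm:tu} to upgrade the piece on $\text{Ker}(I-P)$ to an $\mathbb E$-unitary. No issues.
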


\begin{proof}
Similar to that of Theorem \ref{thm:11}.
\end{proof}

We present an analogue of Theorem \ref{NL1} for $\mathbb
E$-contractions.

\begin{thm}\label{cd41}
    Let $(A,B,P)$ be a c.n.u $\mathbb E$-contraction on $\mathcal{H}$ such
    that $A^*,B^*$ commute with $P$. Then with respect to the unique orthogonal decomposition
    $\mathcal{H} = \mathcal{H}_1^{\prime}\oplus\mathcal{H}_2^{\prime}
    $ as in Theorem \ref{NL1}, the following hold:

    \begin{enumerate}
        \item $\mathcal{H}_1^{\prime}$ is a maximal joint invariant subspace for $A, B, P$
        and that $\left(A|_{\mathcal{H}_1^{\prime}}, B|_{\mathcal{H}_1^{\prime}}, P|_{\mathcal{H}_1^{\prime}}\right)$ is c.n.u $\mathbb E$-isometry;
        \item $\mathcal{H}_2^{\prime}$ is a maximal joint invariant subspace for
        $A^*, B^*, P^*$ and that $\left(A^*|_{\mathcal{H}_2^{\prime}}, B^*|_{\mathcal{H}_2^{\prime}},
        P^*|_{\mathcal{H}_2^{\prime}} \right)$ is c.n.u $\mathbb E$-contraction.
    \end{enumerate}

\end{thm}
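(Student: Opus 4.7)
The plan is to mimic the argument of Theorem \ref{cd4} almost verbatim, replacing the $\Gamma_n$-isometry characterization by the corresponding $\mathbb{E}$-isometry criterion from Theorem \ref{thm:ti}. The hypothesis that $A^*,B^*$ commute with $P$ is what makes the transplant go through.

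First, I would recall from the proof of Theorem \ref{NL1} that
\[
\mathcal{H}_1^{\prime}=\{h\in\mathcal{H}:\|P^n h\|=\|h\|\text{ for all }n\geq 1\},
\]
and note the consequence $P^{*n}P^n h=h$ for every $h\in\mathcal{H}_1^{\prime}$ (obtained exactly as in the proof of Theorem \ref{cd4} by expanding $\|P^{*n}P^n h-h\|^2$ and using that $P$ is a contraction). Next, fix $h\in\mathcal{H}_1^{\prime}$. Using $A^*P=PA^*$ (hence $A^*P^n=P^n A^*$, and so $P^{*n}A=AP^{*n}$), I would compute
\[
\|P^n A h\|^2=\langle P^{*n}P^n A h, Ah\rangle=\langle A P^{*n}P^n h, Ah\rangle=\langle A h, Ah\rangle=\|A h\|^2,
\]
which shows $Ah\in\mathcal{H}_1^{\prime}$; the identical computation with $B$ in place of $A$ gives $Bh\in\mathcal{H}_1^{\prime}$. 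Thus $\mathcal{H}_1^{\prime}$ is jointly invariant for $A,B,P$, and by taking orthogonal complements the invariance of $\mathcal{H}_2^{\prime}$ under $A^*,B^*,P^*$ follows immediately.

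Since the restriction of an $\mathbb{E}$-contraction to a joint invariant subspace is again an $\mathbb{E}$-contraction (a consequence of Lemma \ref{lemTB} applied to compressions), the triple $(A|_{\mathcal{H}_1^{\prime}},B|_{\mathcal{H}_1^{\prime}},P|_{\mathcal{H}_1^{\prime}})$ is an $\mathbb{E}$-contraction. By Theorem \ref{NL1}, $P|_{\mathcal{H}_1^{\prime}}$ is a c.n.u.\ isometry, so Theorem \ref{thm:ti} immediately upgrades the triple to a c.n.u.\ $\mathbb{E}$-isometry. Analogously, $(A^*|_{\mathcal{H}_2^{\prime}},B^*|_{\mathcal{H}_2^{\prime}},P^*|_{\mathcal{H}_2^{\prime}})$ is an $\mathbb{E}$-contraction whose last component $P^*|_{\mathcal{H}_2^{\prime}}$ is c.n.u.\ by Theorem \ref{NL1}, giving statement (2).

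The maximality clauses follow formally from the maximality already built into Theorem \ref{NL1}: any larger joint invariant subspace on which $P$ is a c.n.u.\ isometry would in particular be a subspace of $\mathcal{H}$ on which $P$ is a c.n.u.\ isometry, contradicting the choice of $\mathcal{H}_1^{\prime}$, and similarly for $\mathcal{H}_2^{\prime}$. The only non-routine piece of the argument is the passage from $P^{*n}P^n h=h$ to $\|P^n Ah\|=\|Ah\|$, and this is precisely where the commutation hypothesis $A^*P=PA^*$ is indispensable; as Note 2 after Theorem \ref{cd4} indicates, Example \ref{example12} shows that dropping this hypothesis can destroy the conclusion, so I do not expect any strengthening to be possible without it.
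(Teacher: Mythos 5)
Your proposal is correct and is essentially the paper's own argument: the paper's proof of this theorem consists of the single line ``We may imitate the proof of Theorem \ref{cd4},'' and you have carried out exactly that imitation, with the right substitutions (the commutation $A^*P=PA^*$, $B^*P=PB^*$ in place of $S_i^*P=PS_i^*$, and Theorem \ref{thm:ti} in place of the $\Gamma_n$-isometry criterion). No gaps.
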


\begin{proof}
We may imitate the proof of Theorem \ref{cd4}.

\end{proof}

\noindent \textbf{Note.} We cannot drop the condition that $A^*,
B^*$ commute with $P$ in the hypothesis of Theorem \ref{cd51}.
Example \ref{example12} clearly shows that the conclusion of
Theorem \ref{cd51} may not be reached if $A^*, B^*$ do not commute
with $P$.\\

\noindent The next theorem that we are going to present is an analogue of Theorem \ref{NL2} for
$\mathbb E$-contractions and an extension of Theorem \ref{cd41}. So, naturally as in Theorem \ref{cd41}, we need to assume that $P$ commutes with $A^*,B^*$.

\begin{thm}\label{cd51}
    Let $(A,B,P)$ be a completely non-unitary $\mathbb E$-contraction
    on $\mathcal{H}$ such that $A^*P = PA^*$ and $B^*P = PB^*$. Then $\mathcal{H}$
    admits a unique orthogonal decomposition
    $
    \mathcal{H} = \mathcal{H}_1^{\circ} \oplus \mathcal{H}_2^{\circ} \oplus \mathcal{H}_3^{\circ},
    $
    such that
    \begin{enumerate}
        \item $\mathcal{H}_1^{\circ}$ and $\mathcal{H}_3^{\circ}$ are joint invariant subspaces for $A,\,B \text{ and } P$;
        \item $\mathcal{H}_2^{\circ}$ is a joint invariant subspace for $A^*,\,B^* \text{ and } P^*$\,;
        \item $\left( A^*|_{\mathcal{H}_1^{\circ}}, B^*|_{\mathcal{H}_1^{\circ}}, P^*|_{\mathcal{H}_1^{\circ}} \right)$
        is a pure $\mathbb E$-contraction;
        \item $\left( A|_{\mathcal{H}_2^{\circ}}, B|_{\mathcal{H}_2^{\circ}}, P|_{\mathcal{H}_2^{\circ}} \right)$
        is a c.n.u $\mathbb E$-contraction;
        \item $\left( A|_{\mathcal{H}_3^{\circ}}, B|_{\mathcal{H}_3^{\circ}}, P|_{\mathcal{H}_3^{\circ}} \right)$
        is a c.n.u $\mathbb E$-isometry.
    \end{enumerate}
\end{thm}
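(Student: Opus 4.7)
The plan is to transplant the argument of Theorem \ref{cd5} into the tetrablock setting, with Theorem \ref{cd41} playing the role of Theorem \ref{cd4}. Following Theorem \ref{NL2}, I identify $\mathcal{H}_3^{\circ}$ with the subspace $\mathcal{H}_1'$ of Theorem \ref{NL1} (on which $P$ is a c.n.u isometry), set $\mathcal{H}_1^{\circ} := M(P) := \{x \in \mathcal{H} : \|P^n x\| \to 0\}$, and, since Theorem \ref{NL1} gives $M(P) \subseteq \mathcal{H}_2'$, take $\mathcal{H}_2^{\circ} := \mathcal{H}_2' \ominus M(P)$.

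By Theorem \ref{cd41}, $\mathcal{H}_3^{\circ} = \mathcal{H}_1'$ is already a joint invariant subspace for $A, B, P$ with $(A, B, P)|_{\mathcal{H}_3^{\circ}}$ a c.n.u $\mathbb{E}$-isometry, yielding (5); that theorem also makes $\mathcal{H}_2'$ joint invariant for $A^*, B^*, P^*$ with $P^*|_{\mathcal{H}_2'}$ c.n.u. For $\mathcal{H}_1^{\circ} = M(P)$, the commutation $AP = PA$ gives, for any $x \in M(P)$,
\[
\|P^n(Ax)\| = \|A P^n x\| \leq \|A\|\,\|P^n x\| \to 0,
\]
so $Ax \in M(P)$; similarly $Bx, Px \in M(P)$, establishing joint $(A, B, P)$-invariance of $\mathcal{H}_1^{\circ}$. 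The restriction of the $\mathbb{E}$-contraction to this invariant subspace is then an $\mathbb{E}$-contraction whose $P$-component is strongly stable, and passing to adjoints furnishes (3).

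For $\mathcal{H}_2^{\circ}$, the hypothesis $A^* P = P A^*$ lets the identical estimate give $A^* M(P) \subseteq M(P)$, and similarly for $B^*$. Then for $x \in \mathcal{H}_2^{\circ} = \mathcal{H}_2' \ominus M(P)$, Theorem \ref{cd41} gives $A^* x \in \mathcal{H}_2'$, while for every $y \in M(P)$,
\[
\langle A^* x, y \rangle = \langle x, A y \rangle = 0,
\]
since $A y \in M(P)$ and $x \perp M(P)$; hence $A^* x \in \mathcal{H}_2^{\circ}$. Analogous arguments for $B^*$ and $P^*$ give (2), and then $(A^*, B^*, P^*)|_{\mathcal{H}_2^{\circ}}$ inherits $\mathbb{E}$-contractivity together with the c.n.u-ness of $P^*|_{\mathcal{H}_2'}$, settling (4). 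The main (minor) obstacle is simply the bookkeeping of which subspace is invariant under which operators; once the invariance is in hand, the routine fact that restrictions of $\mathbb{E}$-contractions to joint invariant subspaces are $\mathbb{E}$-contractions (Lemma \ref{lemTB}, together with the evident closure of the class under adjoints) completes the proof. Uniqueness is inherited from Theorem \ref{NL2}, and the commutation hypotheses $A^* P = P A^*$, $B^* P = P B^*$ are used essentially at the step that makes $M(P)$ invariant under $A^*, B^*$.
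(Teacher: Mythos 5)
Your proposal is correct and is essentially the paper's own proof: the paper disposes of this theorem with the single line ``one can prove it easily if follows the proof of Theorem \ref{cd5}'', and your argument is precisely that transplant, with $\mathcal{H}_3^{\circ}=\mathcal{H}_1'$, $\mathcal{H}_1^{\circ}=M(P)$, $\mathcal{H}_2^{\circ}=\mathcal{H}_2'\ominus M(P)$ and Theorem \ref{cd41} playing the role of Theorem \ref{cd4}. The only quibble is one of attribution: the commutation hypotheses $A^*P=PA^*$, $B^*P=PB^*$ are used essentially to invoke Theorem \ref{cd41} (so that $\mathcal{H}_2'$ is $A^*,B^*$-invariant), while the orthogonality step only needs $AM(P)\subseteq M(P)$, which follows from $AP=PA$ alone.
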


\begin{proof}
One can prove it easily if follows the proof of Theorem \ref{cd5}.
\end{proof}

We present an analogue of Foguel's theorem (Theorem \ref{Foguel})
for an $\mathbb E$-contraction.

\begin{thm}\label{Foguel11}
    Let $\left( A, B , P \right)$ be an $\mathbb E$-contraction on a Hilbert space $\mathcal{H}$. Then
    \begin{enumerate}
        \item $\mathcal{Z}$ and $\mathcal{Z}^{\perp}$ reduce $A$ and
        $B$,
        \item $\left( A|_{\mathcal{Z}},B|_{\mathcal{Z}}, P|_{\mathcal{Z}} \right)$ is a weakly stable $\mathbb E$-contraction,
        \item $\left(A|_{\mathcal{Z}^{\perp}},B|_{\mathcal{Z}^{\perp}}, P|_{\mathcal{Z}^{\perp}}\right)$ is an $\mathbb E$-unitary.
    \end{enumerate}
\end{thm}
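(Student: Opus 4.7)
The plan is to imitate the proof of Theorem \ref{Foguel1} almost verbatim, because once one knows that $\mathcal{Z}$ reduces $A$ and $B$, the three conclusions follow exactly as in the $\Gamma_n$-case. The subspace $\mathcal{Z}$ is the analogue of $\mathcal{E}$ in Foguel's theorem, namely
\[
\mathcal{Z} = \{ x \in \mathcal{H} : \langle P^n x, y \rangle \to 0 \text{ as } n \to \infty, \; \forall \, y \in \mathcal{H}\},
\]
and Theorem \ref{Foguel} (applied to the contraction $P$) already tells us that $\mathcal{Z}$ and $\mathcal{Z}^\perp$ reduce $P$, with $P|_{\mathcal{Z}}$ weakly stable and $P|_{\mathcal{Z}^\perp}$ unitary.

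First I would introduce the three auxiliary subspaces
\[
\mathcal{Z}' = \{ x : \langle P^n x, x \rangle \to 0\}, \quad \mathcal{Z}_* = \{ x : \langle P^{*n} x, y \rangle \to 0 \; \forall y\}, \quad \mathcal{Z}_*' = \{ x : \langle P^{*n} x, x \rangle \to 0\},
\]
and cite the same Theorem 7.3 of \cite{K:C} that was used in the proof of Theorem \ref{Foguel1} to obtain the string of equalities $\mathcal{Z} = \mathcal{Z}' = \mathcal{Z}_* = \mathcal{Z}_*'$. Next, using the commutation relations $AP = PA$ and $BP = PB$, for any $x \in \mathcal{Z}$ and any $y \in \mathcal{H}$,
\[
\langle P^n A x, y \rangle = \langle A P^n x, y \rangle = \langle P^n x, A^* y \rangle \to 0,
\]
and similarly for $B$, so $A$ and $B$ leave $\mathcal{Z}$ invariant. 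Passing to adjoints, $A^* P^* = P^* A^*$ and $B^* P^* = P^* B^*$, so for $x \in \mathcal{Z} = \mathcal{Z}_*$,
\[
\langle P^{*n} A^* x, y \rangle = \langle A^* P^{*n} x, y \rangle = \langle P^{*n} x, A y \rangle \to 0,
\]
which shows $A^* x \in \mathcal{Z}_* = \mathcal{Z}$, and likewise for $B^*$. Therefore $\mathcal{Z}$ is reducing for both $A$ and $B$.

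For the structural conclusions, the restriction $(A|_{\mathcal{Z}}, B|_{\mathcal{Z}}, P|_{\mathcal{Z}})$ is an $\mathbb{E}$-contraction because it is the restriction of an $\mathbb{E}$-contraction to a joint reducing subspace, and its last component $P|_{\mathcal{Z}}$ is weakly stable by the very definition of $\mathcal{Z}$, giving (2). On $\mathcal{Z}^\perp$, $P|_{\mathcal{Z}^\perp}$ is unitary by Foguel's theorem, so $(A|_{\mathcal{Z}^\perp}, B|_{\mathcal{Z}^\perp}, P|_{\mathcal{Z}^\perp})$ is an $\mathbb{E}$-contraction with unitary last component, and Theorem \ref{thm:tu} immediately yields that it is an $\mathbb{E}$-unitary, giving (3). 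The only real content lies in the Kubrusly-type equality $\mathcal{Z} = \mathcal{Z}_*$, which bypasses the need for an extra commutation hypothesis like the one required in Theorem \ref{cd41}; everything else is a direct transcription of the $\Gamma_n$ argument and is not expected to present any obstacle.
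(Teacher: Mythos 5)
Your proposal is correct and follows essentially the same route as the paper: the paper's proof of Theorem \ref{Foguel11} simply defers to the proof of Theorem \ref{Foguel1}, which is exactly the argument you give — the Kubrusly equality $\mathcal{Z}=\mathcal{Z}'=\mathcal{Z}_*=\mathcal{Z}_*'$ from Theorem 7.3 of \cite{K:C}, followed by the commutation computations showing $\mathcal{Z}$ reduces $A$ and $B$, and then Theorem \ref{thm:tu} for the unitary part. No discrepancies to report.
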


\begin{proof}
The proof is similar to that of Theorem \ref{Foguel1}.
\end{proof}


\end{document}